\newcommand{\N}{\mathbb{N}}
\newcommand{\Z}{\mathbb{Z}}
\newcommand{\R}{\mathbb{R}}
\newcommand{\C}{\mathbb{C}}
\newcommand{\Pj}{\mathbb{P}}
\newcommand{\Hyp}{\mathcal{H}}
\newcommand{\U}{\mathrm{U}}
\newcommand{\SU}{\mathrm{SU}}
\newcommand{\SP}{\mathrm{Sp}}
\newcommand{\spc}{\mathrm{S}}
\newcommand{\Hom}{\mathrm{Hom}}
\newcommand{\Ric}{\mathrm{Ric}}
\newcommand{\scal}{\mathrm{scal}}
\newcommand{\smooth}[1]{\mathcal{C}^{\infty}{\left( {#1} \right)}}
\newcommand{\field}[1]{\mathfrak{X}{\left({{#1}} \right)}}
\newcommand{\spn}[1]{\left\langle {#1} \right\rangle}
\newcommand{\id}{\mathrm{id}}
\newcommand{\Ad}{\mathrm{Ad}}
\newcommand{\lie}[1]{\mathfrak{#1}}
\newcommand{\Lie}[1]{\mathcal{L}_{#1}}
\newcommand{\tr}{\mathrm{tr}}
\newcommand{\rvline}{\hspace*{-\arraycolsep}\vline\hspace*{-\arraycolsep}}
\renewcommand{\Re}{\operatorname{Re}}
\renewcommand{\Im}{\operatorname{Im}}
\newcommand{\RE}{\mathfrak{R}}
\newcommand{\norm}[1]{\left\lVert{#1}\right\rVert}
\newtheorem{defi}{Definition}[section]
\newtheorem{theo}[defi]{Theorem}
\newtheorem{prop}[defi]{Proposition}
\newtheorem{cor}[defi]{Corollary}
\newtheorem{lemma}[defi]{Lemma}
\newtheorem{rmk}[defi]{Remark}
\title{Construction of projective special K\"{a}hler manifolds}
\author{Mauro Mantegazza\footnote{Dipartimento di matematica e applicazioni, Università degli studi Milano-Bicocca, via Roberto Cozzi, 55, Bldg.U5, 20125, Milano, Italy. E-mail: \href{mailto:mauro.mantegazza.uni@gmail.com}{mauro.mantegazza.uni@gmail.com}, ORCID: 0000-0001-7938-2950}}
\begin{document}
\maketitle
\begin{abstract}
In this paper we present an intrinsic characterisation of projective special K\"{a}hler manifolds in terms of a symmetric tensor satisfying certain differential and algebraic conditions.
We show that this tensor vanishes precisely when the structure is locally isomorphic to a standard projective special K\"{a}hler structure on $\SU(n,1)/\spc(\U(n)\U(1))$.
We use this characterisation to classify $4$-dimensional projective special K\"{a}hler Lie groups.
\renewcommand{\thefootnote}{\fnsymbol{footnote}}
\begin{NoHyper}
\footnotetext{\emph{MSC class}: 53C55; 53C26, 22E25, 53C80.}
\end{NoHyper}
\renewcommand{\thefootnote}{\arabic{footnote}}
\end{abstract}
\section{Introduction}
Projective special K\"{a}hler manifolds are a special class of K\"{a}hler quotients of conic special K\"{a}hler manifolds which is a class of pseudo-K\"{a}hler manifolds endowed with a symplectic, flat, torsion-free connection and an infinitesimal homothety.

Explicit examples can be found in \cite{CortesClassHomoSS}, where homogeneous projective special K\"{a}hler manifolds of semisimple Lie groups are classified.
A notable case appearing in this list is the complex hyperbolic $n$-space.
Many projective special K\"{a}hler manifolds can be constructed via the so called r-map \cite{CiSC}, which is a construction arising from supergravity and string theory allowing to build a projective special K\"{a}hler manifold starting from a homogeneous cubic polynomial.
See \cite{CortesClassPSKr} for a classification of $6$-dimensional manifolds that can be constructed via the r-map.
Another example is obtained by taking the Weil-Petersson metric on the space of complex structure deformations on a Calabi-Yau $3$-dimensional manifold \cite{CortesHK1998}.

Projective special K\"{a}hler manifolds appear in the study of supergravity and mirror symmetry with the name \emph{local special K\"{a}hler manifolds} (see \cite{Fre1995} and \cite{Freed1999} for more details on their history and applications to physics, and in particular \cite{CandelasOssa1991} for their importance in mirror symmetry).
The name projective special K\"{a}hler was given by Freed in \cite{Freed1999} where he also shows how such manifolds are quotients of special K\"{a}hler ones \cite[Proposition 4.6, p.\ 20]{Freed1999} (see e.g.\ \cite{SpComMan} for the relation between this definition and the one we will use in this work).

Projective special K\"{a}hler manifolds are not only interesting on their own, as they find an important application in quaternion K\"{a}hler geometry.
The construction known as c-map, also arising from the same areas of physics, allows in fact to create quaternion K\"{a}hler manifolds of negative scalar curvature starting from projective special K\"{a}hler ones \cite{CiSC}, \cite{Conification2013}, \cite{CortesHKQK}, \cite{Swann2015}, \cite{HitHKQK}, \cite{FerSab}, \cite{CFG1989}.
Quaternion K\"{a}hler manifolds are orientable smooth Riemannian manifolds of dimension $4n$ with $n\ge 2$, whose holonomy group is a subgroup of $\SP(n)\SP(1)$ not contained in $\SP(n)$.
They are important since they are a special family of Einstein manifolds with non vanishing Ricci tensor, corresponding to one of the possible holonomy groups of a locally irreducible, non-locally symmetric, simply connected Riemannian manifold in Berger's list (see \cite{Berger}).

In this paper we present a characterisation of projective special K\"{a}hler manifolds that will hopefully shed more light on this type of structure.
Our characterisation is intrinsic in the sense that we reduce the projective special K\"{a}hler structure to data solely defined on the manifold itself.
The characterisation is obtained by means of a locally defined symmetric tensor that we call deviance, satisfying certain conditions: a differential one and an algebraic one.
The deviance tensor emerges from the difference between two naturally occurring connections on the conic special K\"{a}hler manifold over a projective special K\"{a}hler one.
Our description arises by writing the two connections with respect to a local frame, and relating the difference to a tensor defined on the basis.
We also prove a lower bound for the scalar curvature, which is reached exactly when the deviance is zero; this condition characterises projective special K\"{a}hler manifolds isomorphic to the complex hyperbolic $n$-space if one assumes the manifold complete, connected and simply connected.
For projective special K\"ahler manifolds of elliptic type, i.e.\ such that the corresponding conic special K\"ahler metric is positive definite, we have a similar result in \cite[Theorem 16, p.\ 126]{BC2003}, where complex projective spaces are characterised as the only complete projective special K\"ahler domains of elliptic type.
In that context, in fact, the vanishing of the deviance is induced by the completeness condition.

Our characterisation provides a simpler way to construct projective special K\"{a}hler manifolds, and we display this by classifying all possible projective special K\"{a}hler structures on $4$-dimensional Lie groups.
Our classification relies on the classification of K\"{a}hler Lie groups by Ovando \cite{Ovando2004}.

We note that an intrinsic characterisation of projective special K\"{a}hler Lie groups has been obtained independently in a very recent paper by Macia and Swann \cite{MaciaSwann2019}.
Our setting is slightly more general, since in our study of projective special K\"{a}hler Lie groups we do not assume the deviance tensor to be left-invariant.
In \cite{MaciaSwann2019} it is also shown that projective special K\"{a}hler Lie groups determine quaternion K\"{a}hler Lie groups via the c-map, if one assumes the exactness of the K\"{a}hler form and the invariance of the flat connection.
A similar result, holding in the case that the projective special K\"{a}hler Lie group is the quotient of an affine special K\"{a}hler domain, can be deduced from the more general result \cite[Corollary 24, p.\ 33]{CortesCohomogeneityOne}.

Since we are ultimately interested in the c-map, throughout this paper we adopt the same convention as \cite{CiSC}, where we only consider projective special K\"{a}hler manifolds obtained from conic special K\"{a}hler manifolds with signature $(2n,2)$.
Nonetheless, our characterisation can be generalised to generic signatures.
It is worth mentioning that the deviance, being a symmetric tensor of type (3,0), can often be seen as a homogeneous polynomial of degree three, which may have a role in providing a partial inversion to the r-map.

We also use our characterisation to show that, on a K\"{a}hler manifold, the existence of a symmetric tensor satisfying the deviance conditions implies the existence of a whole family of projective special K\"{a}hler structures and we provide sufficient conditions for said structures to be isomorphic.

\textbf{Acknowledgements}. This paper is part of the author's PhD thesis \cite{PhDThesis} written under the supervision of Diego Conti.
Part of the study leading to this work was carried out during a visiting period at QGM, Aarhus; the author wishes to thank Andrew Swann for many useful discussions during that visit.
This is a preprint of an article published in \emph{Annali di Matematica Pura ed Applicata (1923 -)}.
The final authenticated version is available online at: \href{https://doi.org/10.1007/s10231-021-01096-4}{https://doi.org/10.1007/s10231-021-01096-4}.

\section{Definitions}
In this section we are introducing the basic objects that we are going to discuss in this work.

The coming definition involves a flat connection $\nabla$ and its exterior covariant derivative operator $d^{\nabla}$.
\begin{defi}\label{def:CSK}
A \emph{conic special K\"{a}hler} manifold is the data of a pseudo-K\"{a}hler manifold $(\widetilde{M},\widetilde{g},\widetilde{I},\widetilde{\omega})$ with a flat, torsion-free, symplectic connection $\nabla$ and a vector field $\xi$ such that
\begin{enumerate}
\item $d^{\nabla}\widetilde{I}=0$ where we interpret $\widetilde{I}$ as a $1$-form with values in $T\widetilde{M}$;
\item $\widetilde{g}(\xi,\xi)$ is nowhere vanishing;
\item $\nabla \xi=\widetilde{\nabla}^{LC} \xi=\id$;
\item\label{cond:segnaturaPSK} $\widetilde{g}$ is negative definite on $\spn{\xi,I\xi}$ and positive definite on its orthogonal complement.
\end{enumerate}
Here $\widetilde{\nabla}^{LC}$ is the Levi-Civita connection.
\end{defi}
We will adopt the convention $\widetilde{\omega}=\widetilde{g}(\widetilde{I}\cdot,\cdot)$.
Definition \ref{def:CSK} is identical to Definition 3 in \cite{CiSC} if we take $-g$ as metric.
 
We start by showing how the Lie derivative along $\xi$ and $I\xi$ in a conic special K\"{a}hler manifold behaves on the K\"{a}hler structure.
\begin{lemma}[Lemma 3.2, p.\ 1336 in \cite{Swann2015}]\label{lemma:azioneInfinitesimaXieIxi}
Let $(\widetilde{M},\widetilde{g},\widetilde{I},\widetilde{\omega},\nabla,\xi)$ be a conic special K\"{a}hler manifold, then:
\begin{enumerate}
\item $\xi$ is a homothety of scaling factor $2$ preserving $\widetilde{I}$;
\item $\widetilde{I}\xi$ preserves the K\"{a}hler structure.
\end{enumerate}
\end{lemma}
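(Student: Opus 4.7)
The plan is to reduce both statements to short Lie-derivative computations that rely on only two ingredients: the Levi-Civita connection $\widetilde{\nabla}^{LC}$ is torsion-free and parallelises both $\widetilde{g}$ and $\widetilde{I}$, and by hypothesis $\widetilde{\nabla}^{LC}\xi=\id$. From the latter together with $\widetilde{\nabla}^{LC}\widetilde{I}=0$ I obtain the companion identity $\widetilde{\nabla}^{LC}_{X}(\widetilde{I}\xi)=\widetilde{I}X$, and torsion-freeness yields the commutator formulas $[\xi,X]=\widetilde{\nabla}^{LC}_{\xi}X-X$ and $[\widetilde{I}\xi,X]=\widetilde{\nabla}^{LC}_{\widetilde{I}\xi}X-\widetilde{I}X$. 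None of the additional conditions coming with the conic special K\"ahler structure (the symplectic/flat nature of $\nabla$, or $d^{\nabla}\widetilde{I}=0$) will actually be used; only the K\"ahler property and the Levi-Civita equation for $\xi$ enter.

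For part (1), I would expand $(\Lie{\xi}\widetilde{g})(X,Y)=\xi\,\widetilde{g}(X,Y)-\widetilde{g}([\xi,X],Y)-\widetilde{g}(X,[\xi,Y])$, convert the first term by metricity of $\widetilde{\nabla}^{LC}$ and rewrite the two brackets with the formula above; the $\widetilde{\nabla}^{LC}_{\xi}$ contributions cancel and what survives is $2\widetilde{g}(X,Y)$. For preservation of $\widetilde{I}$, I expand $(\Lie{\xi}\widetilde{I})(X)=[\xi,\widetilde{I}X]-\widetilde{I}[\xi,X]$ with the same commutator formula, and then use $\widetilde{\nabla}^{LC}\widetilde{I}=0$ to commute $\widetilde{I}$ past $\widetilde{\nabla}^{LC}_{\xi}$; the two resulting expressions coincide.

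For part (2), exactly the same scheme with $\widetilde{I}\xi$ in place of $\xi$ reduces $(\Lie{\widetilde{I}\xi}\widetilde{g})(X,Y)$ to $\widetilde{g}(\widetilde{I}X,Y)+\widetilde{g}(X,\widetilde{I}Y)$, which vanishes because $\widetilde{I}$ is skew with respect to $\widetilde{g}$. An analogous expansion of $(\Lie{\widetilde{I}\xi}\widetilde{I})(X)$, combined with $\widetilde{I}^{2}=-\id$ and parallelism of $\widetilde{I}$, produces two identical terms that cancel. Preservation of $\widetilde{\omega}=\widetilde{g}(\widetilde{I}\cdot,\cdot)$ then follows formally from these two vanishings.

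I do not anticipate a genuine obstacle; the real content of the lemma is that the single identity $\widetilde{\nabla}^{LC}\xi=\id$, together with the K\"ahler property, already encodes that $\xi$ acts infinitesimally as a homothety preserving the complex structure while $\widetilde{I}\xi$ acts as an automorphism of the whole K\"ahler structure, so the proof amounts to a few lines of tensorial bookkeeping.
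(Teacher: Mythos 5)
Your proof is correct: the identities $[\xi,X]=\widetilde{\nabla}^{LC}_{\xi}X-X$ and $[\widetilde{I}\xi,X]=\widetilde{\nabla}^{LC}_{\widetilde{I}\xi}X-\widetilde{I}X$, combined with metricity and $\widetilde{\nabla}^{LC}\widetilde{I}=0$, do yield $\Lie{\xi}\widetilde{g}=2\widetilde{g}$, $\Lie{\xi}\widetilde{I}=0$, $\Lie{\widetilde{I}\xi}\widetilde{g}=0$ and $\Lie{\widetilde{I}\xi}\widetilde{I}=0$, whence $\Lie{\widetilde{I}\xi}\widetilde{\omega}=0$. The paper gives no argument of its own and simply defers to Swann's Lemma 3.2 (with $X=-\widetilde{I}\xi$); your computation is the standard one carried out there, and your remark that only the K\"ahler property and the Levi-Civita half of condition $\nabla\xi=\widetilde{\nabla}^{LC}\xi=\id$ are needed is accurate.
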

\begin{proof}
See e.g.\ \cite{Swann2015} where $X=-I\xi$.
\end{proof}
Before proceeding, we write the following lemma for future reference.
\begin{lemma}\label{lemma:XeXi}
In a conic special K\"{a}hler manifold $(\widetilde{M},\widetilde{g},\widetilde{I},\widetilde{\omega},\nabla,\xi)$, $\nabla (\widetilde{I}\xi)=\widetilde{I}$.
\end{lemma}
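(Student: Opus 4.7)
The plan is to expand $\nabla_X(\widetilde{I}\xi)$ via the Leibniz rule, use the condition $d^{\nabla}\widetilde{I}=0$ to interchange the roles of $X$ and $\xi$, and then reduce to $\nabla_\xi\widetilde{I}=0$, which will follow from Lemma \ref{lemma:azioneInfinitesimaXieIxi}.

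Concretely, the first step is straightforward: since $\nabla$ is a linear connection and $\nabla\xi=\id$,
\begin{equation*}
\nabla_X(\widetilde{I}\xi) = (\nabla_X\widetilde{I})\xi + \widetilde{I}\,\nabla_X\xi = (\nabla_X\widetilde{I})\xi + \widetilde{I}X,
\end{equation*}
so the claim $\nabla(\widetilde{I}\xi)=\widetilde{I}$ reduces to $(\nabla_X\widetilde{I})\xi=0$ for every $X$. For the second step, I would interpret $\widetilde{I}$ as a $T\widetilde{M}$-valued $1$-form, so that $d^{\nabla}\widetilde{I}=0$ reads $(\nabla_X\widetilde{I})Y=(\nabla_Y\widetilde{I})X$; substituting $Y=\xi$ gives $(\nabla_X\widetilde{I})\xi=(\nabla_\xi\widetilde{I})X$.

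It then remains to prove $\nabla_\xi\widetilde{I}=0$, which is the step I expect to be the only real content. Here I would invoke the fact that $\nabla$ is torsion-free together with $\nabla_Y\xi=Y$ to express the bracket $[\xi,Y]=\nabla_\xi Y-\nabla_Y\xi=\nabla_\xi Y-Y$. Plugging this into the definition of the Lie derivative, a short computation yields
\begin{equation*}
(\Lie{\xi}\widetilde{I})(X)=[\xi,\widetilde{I}X]-\widetilde{I}[\xi,X]=\nabla_\xi(\widetilde{I}X)-\widetilde{I}\,\nabla_\xi X=(\nabla_\xi\widetilde{I})(X),
\end{equation*}
where the cancellation of the two $\widetilde{I}X$ terms is the place where torsion-freeness and $\nabla\xi=\id$ combine. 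Since Lemma \ref{lemma:azioneInfinitesimaXieIxi} tells us that $\xi$ preserves $\widetilde{I}$, i.e.\ $\Lie{\xi}\widetilde{I}=0$, we conclude $\nabla_\xi\widetilde{I}=0$ and hence $(\nabla_X\widetilde{I})\xi=0$, completing the proof.

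The argument is essentially computational once the right identification is made; the main obstacle, if any, is recognising that $d^{\nabla}\widetilde{I}=0$ is what permits the swap reducing everything to the already-known invariance of $\widetilde{I}$ under $\xi$.
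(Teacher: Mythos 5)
Your proof is correct and follows essentially the same route as the paper's: expand via $\nabla\xi=\id$, use $d^{\nabla}\widetilde{I}=0$ (with torsion-freeness) to swap $X$ and $\xi$, and identify $\nabla_\xi\widetilde{I}$ with $\Lie{\xi}\widetilde{I}=0$ from Lemma \ref{lemma:azioneInfinitesimaXieIxi}. The only difference is presentational: the paper strings these steps into a single chain of equalities, while you isolate $\nabla_\xi\widetilde{I}=\Lie{\xi}\widetilde{I}$ as an intermediate identity.
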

\begin{proof}
For all $X\in\field{\widetilde{M}}$,
\begin{align*}
\nabla_X (\widetilde{I}\xi)-\widetilde{I}X
&=(\nabla_X \widetilde{I})\xi+\widetilde{I}\nabla_X\xi-\widetilde{I}X
=(\nabla_X \widetilde{I})\xi
=(\nabla_{\xi} \widetilde{I})X\\
&=\nabla_{\xi} (\widetilde{I}X)-\widetilde{I}\nabla_{\xi} X
=\nabla_{\widetilde{I}X} (\xi)+[\xi,\widetilde{I}X]-\widetilde{I}\left(\nabla_{X} \xi+[\xi,X]\right)\\
&=\widetilde{I}X+\Lie{\xi} (\widetilde{I}X)-\widetilde{I}X-\widetilde{I}\Lie{\xi} X
=(\Lie{\xi} \widetilde{I})X
=0.\qedhere
\end{align*}
\end{proof}
If we compare Definition \ref{def:CSK} with Definition 3.1 in \cite{Swann2015}, we notice that the main difference is the signature of the metric: it is enough to add condition \ref{cond:segnaturaPSK} to the latter and to define $X=-I\xi$ in order to obtain two equivalent definitions.
The proof of the equivalence is obtained by Lemma \ref{lemma:XeXi}.

\begin{defi}\label{def:PSK}
A \emph{projective special K\"{a}hler} manifold is a K\"{a}hler manifold $M$ endowed with a $\C^*$-bundle $\pi\colon \widetilde{M}\to M$ with $(\widetilde{M},\widetilde{g},\widetilde{I},\widetilde{\omega},\nabla,\xi)$ conic special K\"{a}hler such that $\xi$ and $I\xi$ are the fundamental vector fields associated to $1,i\in\C$ respectively and $M$ is the K\"{a}hler quotient with respect to the induced $\U(1)$-action.
In this case we say that $M$ has a projective special K\"{a}hler structure.

For brevity, we will often denote a projective special K\"{a}hler manifold by $(\pi\colon\widetilde{M}\to M,\nabla)$.
\end{defi}
\begin{rmk}
We shall see later that by construction, the action is always Hamiltonian with moment map $-\widetilde{g}(\xi,\xi)$, and the choice of the level set affects the quotient only up to scaling.
\end{rmk}
Concerning the notation for projective special K\"{a}hler manifolds as in Definition \ref{def:PSK}, when a tensor or a connection is possessed by both $\widetilde{M}$ and $M$, we will write them and everything concerning them (torsion, curvature forms, covariant exterior differentials) on $\widetilde{M}$ with $\widetilde{(\cdot)}$ above, whereas the corresponding objects on $M$ will be denoted without it.

\section{Difference tensor}\label{sec:differenceTensor}
This section is devoted to the tensor obtained as difference between the flat and Levi-Civita connection on a conic special K\"{a}hler manifold.
We present the known symmetry of this tensor and write the flatness condition in terms of it \cite[p.\ 9-11]{Freed1999}.

Before talking about the difference tensor, we will introduce some notation and definitions.
Following \cite{RedBook}, if $V$ is a complex representation with a real structure $\sigma$, we define
\begin{equation}
[V]:=\{v\in V|\sigma(v)=v\}.
\end{equation}
Otherwise, for any complex representation $V$,
\begin{equation}
[\![V]\!]:=[V\oplus \overline{V}]
\end{equation}
where $\overline{V}$ is the conjugate representation of $V$.
In particular, the following complex Lie algebra isomorphisms hold:
\begin{equation}
[V]\otimes_{\R} \C\cong V,\qquad
[\![V]\!]\otimes_{\R} \C\cong V\oplus \overline{V}.
\end{equation}
The same notation is used for the associated vector bundles.

Given an almost complex manifold $(M,I)$, let $T_{1,0}M$ be the holomorphic cotangent bundle.
For all $p\in\N$, we denote its $p$-th symmetric power by $S_{p,0}M$.

Given a (pseudo-)Riemannian manifold $(M,g)$, we denote by $\flat$ and $\sharp$ the musical isomorphisms induced by $g$, and we can define the following isomorphism 
\begin{equation}\label{eq:bemolle}
\flat_2=\id\otimes \flat\otimes \id\colon T^*M\otimes TM\otimes T^*M\to T_{3}M
\end{equation}
with inverse $\sharp_2:=\id\otimes \sharp\otimes \id$.

Returning now to the main topic of this section, let $(\widetilde{M},\widetilde{g},\widetilde{I},\widetilde{\omega},\nabla,\xi)$ be a conic special K\"{a}hler manifold of dimension $n+1$.
We define $\widetilde{\eta}$ as the (1,2)-tensor such that for all vector fields $X$, $Y$ on $\widetilde{M}$ we have $\widetilde{\eta}_X Y=\nabla_X Y-\widetilde{\nabla}^{LC}_X Y$, where the employed notation $\widetilde{\eta}_X Y$ means $\widetilde{\eta}(X,Y)$.

Consider frames adapted to the pseudo-K\"{a}hler structure, hence such that the linear model is $(\R^{2n+2},g_0,I_0,\omega_0)$, where $g_0=\sum_{k=1}^{2k}(e^k)^2-(e^{2n+1})^2-(e^{2n+2})^2$, $I_0 e_{2k-1}=e_{2k}$ for $k=1,\dots,n+1$ and $\omega_0=g_0(I_0\cdot,\cdot)$.
Let $\omega^{\nabla}$ and $\widetilde{\omega}^{LC}$ be the connection forms corresponding respectively to the flat and the Levi-Civita connections represented with respect to an adapted frame.
Thus we have
\begin{equation}
\omega^{\nabla}=\widetilde{\omega}^{LC}+\widetilde{\eta}.
\end{equation}

We know that by the theory of Hessian manifolds, $\widetilde{\eta}$ is symmetric (see e.g.\ \cite[\S 3, p.\ 194]{ShimaV} or \cite[\S 1, p.\ 736]{ShimaCSC}). More precisely, we recall the following result (see \cite[Proposition 1.34, p.\ 39]{Freed1999}, or \cite[Proposition 4, p.\ 1743]{BauesCortes} and \cite[Lemma 3, p.\ 1745]{BauesCortes}).
\begin{lemma}\label{lemma:eta_Symm}
On a conic special K\"{a}hler manifold $(\widetilde{M},\widetilde{g},\widetilde{I},\widetilde{\omega},\nabla,\xi)$, the tensor $\widetilde{\eta}$ is a section of $\sharp_2[\![S_{3,0}\widetilde{M}]\!]$.
\end{lemma}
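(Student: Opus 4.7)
The plan is to show that $\widetilde{\eta}^{\flat}:=\flat_2\widetilde{\eta}$, i.e.\ the $(0,3)$-tensor $\widetilde{\eta}^{\flat}(X,Y,Z)=\widetilde{g}(\widetilde{\eta}_X Y,Z)$, is totally symmetric and has only type $(3,0)$ and $(0,3)$ components in its complexification; this places it in $[\![S_{3,0}\widetilde{M}]\!]$ and the lemma follows by applying $\sharp_2$. First I would collect three structural identities. Torsion-freeness of both $\nabla$ and $\widetilde{\nabla}^{LC}$ gives the lower-index symmetry $\widetilde{\eta}_X Y=\widetilde{\eta}_Y X$. Both connections preserve $\widetilde{\omega}$---the flat one by hypothesis, the Levi-Civita because $(\widetilde{g},\widetilde{I},\widetilde{\omega})$ is K\"ahler---so
\[
\widetilde{\omega}(\widetilde{\eta}_X Y,Z)+\widetilde{\omega}(Y,\widetilde{\eta}_X Z)=0.
\]
Finally, expanding $d^{\nabla}\widetilde{I}(X,Y)=(\nabla_X\widetilde{I})Y-(\nabla_Y\widetilde{I})X$ via $\nabla=\widetilde{\nabla}^{LC}+\widetilde{\eta}$ and using $\widetilde{\nabla}^{LC}\widetilde{I}=0$ yields $(\nabla_X\widetilde{I})Y=\widetilde{\eta}_X(\widetilde{I}Y)-\widetilde{I}\widetilde{\eta}_X Y$. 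The hypothesis $d^{\nabla}\widetilde{I}=0$ combined with the lower-index symmetry then collapses to $\widetilde{\eta}_X(\widetilde{I}Y)=\widetilde{\eta}_Y(\widetilde{I}X)$; one more use of symmetry reformulates this as $\widetilde{\eta}_{\widetilde{I}X}=\widetilde{\eta}_X\circ\widetilde{I}$ as endomorphisms of $T\widetilde{M}$.

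Next I would complexify with $T\widetilde{M}\otimes\C=T^{1,0}\oplus T^{0,1}$, on which $\widetilde{I}$ acts by $\pm i$. Evaluating $\widetilde{\eta}_{\widetilde{I}X}=\widetilde{\eta}_X\circ\widetilde{I}$ on $X\in T^{1,0}$ and $Y\in T^{0,1}$ gives $i\widetilde{\eta}_X Y=-i\widetilde{\eta}_X Y$, so $\widetilde{\eta}$ vanishes on mixed pure types. To locate the image on pure types, pick $X,Y\in T^{1,0}$ and decompose $\widetilde{\eta}_X Y=W^{1,0}+W^{0,1}$. Pairing with $Z\in T^{0,1}$ via the $\widetilde{\omega}$-skewness gives $\widetilde{\omega}(\widetilde{\eta}_X Y,Z)=-\widetilde{\omega}(Y,\widetilde{\eta}_X Z)=0$, since $\widetilde{\eta}_X Z=0$; as $\widetilde{\omega}$ is of type $(1,1)$ it vanishes on $T^{0,1}\otimes T^{0,1}$, so this reduces to $\widetilde{\omega}(W^{1,0},Z)=0$ for all $Z\in T^{0,1}$, and nondegeneracy of the induced pairing $T^{1,0}\otimes T^{0,1}\to\C$ forces $W^{1,0}=0$. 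Hence $\widetilde{\eta}(T^{1,0},T^{1,0})\subseteq T^{0,1}$ and, symmetrically, $\widetilde{\eta}(T^{0,1},T^{0,1})\subseteq T^{1,0}$.

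To finish, the mixed-type vanishing of $\widetilde{\eta}$ together with the $(1,1)$-type of $\widetilde{g}$ (which pairs $T^{1,0}$ with $T^{0,1}$ and vanishes on either pure square) makes every mixed-type component of $\widetilde{\eta}^{\flat}$ zero, delivering the type condition. For total symmetry, $(X,Y,Z)\mapsto\widetilde{\omega}(\widetilde{\eta}_X Y,Z)$ is symmetric in $X,Y$ by symmetry of $\widetilde{\eta}$ and in $Y,Z$ by the $\widetilde{\omega}$-skewness, hence totally symmetric; on $(T^{1,0})^{\otimes 3}$, since $\widetilde{\eta}_X Y\in T^{0,1}$ where $\widetilde{I}$ acts as $-i$, we have $\widetilde{\omega}(\widetilde{\eta}_X Y,Z)=-i\,\widetilde{g}(\widetilde{\eta}_X Y,Z)$, so total symmetry of the $\widetilde{\omega}$-version transfers to $\widetilde{\eta}^{\flat}$; the $(T^{0,1})^{\otimes 3}$ case is analogous, and mixed components have already been handled. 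Reality of $\widetilde{\eta}^{\flat}$ then places it in $[\![S_{3,0}\widetilde{M}]\!]$.

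The decisive step is the complexification argument above: repackaging $d^{\nabla}\widetilde{I}=0$ as $\widetilde{\eta}_{\widetilde{I}X}=\widetilde{\eta}_X\circ\widetilde{I}$ and exploiting the $\pm i$-action of $\widetilde{I}$ on pure types is what forces vanishing on mixed types. Once this is in hand, the $(1,1)$-compatibilities of $\widetilde{g}$ and $\widetilde{\omega}$ and the nondegeneracy of the Hermitian pairing produce both the type condition and total symmetry almost automatically.
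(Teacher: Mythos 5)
Your proof is correct, and it is essentially the standard argument that the paper invokes by citation (Freed, Baues--Cort\'es) rather than reproving: lower-index symmetry from torsion-freeness, the $\widetilde{\omega}$-skewness from both connections being symplectic, and $d^{\nabla}\widetilde{I}=0$ yielding $(\nabla_X\widetilde{I})Y=[\widetilde{\eta}_X,\widetilde{I}]Y$ and hence the type $(3,0)+(0,3)$ condition via the $\pm i$ eigenspace decomposition. Note that your intermediate identity is exactly the relation $\nabla\widetilde{I}=[\widetilde{\eta},\widetilde{I}]$ recorded in the paper right after the lemma, so nothing in your route diverges from the intended one.
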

In proving this lemma, one finds the following equality (see e.g.\ \cite[(3.3), p.\ 1743]{BauesCortes}), which we write for further reference.
\begin{equation}\label{eq:nabla I}
\nabla \widetilde{I}=[\widetilde{\eta},\widetilde{I}] =-2\widetilde{I}\widetilde{\eta}.
\end{equation}

Using the flatness of $\nabla$, we observe:
\begin{align}
0=\Omega^\nabla&=\widetilde{\Omega}^{LC}+\widetilde{d}^{LC}\widetilde{\eta} +\frac{1}{2}[\widetilde{\eta}\wedge\widetilde{\eta}],
\end{align}
where $\widetilde{\Omega}^{LC}$ and $\widetilde{d}^{LC}$ are respectively the curvature and exterior covariant derivative of the Levi-Civita connection on $\widetilde{M}$.

Arguing as in \cite[Proposition 1.34 (a), p.\ 39]{Freed1999} (see also \cite[Proposition 4 (iii), p.\ 1743]{BauesCortes}), one obtains
\begin{prop}\label{prop:SpezzamentoCurvatura}
For a K\"{a}hler manifold $(\widetilde{M},\widetilde{g},\widetilde{I},\widetilde{\omega})$ with a tensor $\widetilde{\eta}$ in $T^*M\otimes TM\otimes T^*M$ such that $\flat_2\widetilde{\eta}$ is a section of $[\![S_{3,0}\widetilde{M}]\!]$ and with a connection $\nabla$ with connection form $\omega^{\nabla}=\widetilde{\omega}^{LC}+\widetilde{\eta}$, then
\begin{align}
\Omega^{\nabla}=0&& \textrm{if and only if}&&\begin{cases}
\widetilde{\Omega}^{LC}+\frac{1}{2}[\widetilde{\eta}\wedge\widetilde{\eta}]=0\\
\widetilde{d}^{LC}\widetilde{\eta}=0
\end{cases}.
\end{align}
\end{prop}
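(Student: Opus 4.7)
The plan is to exploit the standard expansion of the curvature form of $\nabla$ relative to the reference connection $\widetilde{\nabla}^{LC}$, and then to use the decomposition of $\End(T\widetilde{M})$ into the $\pm 1$-eigenspaces of conjugation by $\widetilde{I}$ to separate the resulting identity into two pieces.

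First I would write, for any local adapted frame,
\begin{equation*}
\Omega^{\nabla} = d\omega^{\nabla}+\tfrac{1}{2}[\omega^{\nabla}\wedge\omega^{\nabla}]
\end{equation*}
and substitute $\omega^{\nabla}=\widetilde{\omega}^{LC}+\widetilde{\eta}$. Expanding and grouping yields exactly the identity already displayed in the excerpt, namely
\begin{equation*}
\Omega^{\nabla}=\widetilde{\Omega}^{LC}+\widetilde{d}^{LC}\widetilde{\eta}+\tfrac{1}{2}[\widetilde{\eta}\wedge\widetilde{\eta}],
\end{equation*}
where $\widetilde{d}^{LC}\widetilde{\eta}=d\widetilde{\eta}+[\widetilde{\omega}^{LC}\wedge\widetilde{\eta}]$. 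So $\Omega^{\nabla}=0$ is one equation in the bundle of $\End(T\widetilde{M})$-valued $2$-forms, and the task is to show it splits into the two pieces in the statement.

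The key algebraic observation is that since $\widetilde{M}$ is K\"ahler, $\widetilde{\omega}^{LC}$ takes values in the subbundle $\lie{u}(T\widetilde{M},\widetilde{I})\subset\End(T\widetilde{M})$ of endomorphisms commuting with $\widetilde{I}$, so $\widetilde{\Omega}^{LC}$ takes values there as well. On the other hand, the assumption that $\flat_2\widetilde{\eta}$ is a section of $[\![S_{3,0}\widetilde{M}]\!]$ forces $\widetilde{\eta}_X$ to exchange the $(1,0)$- and $(0,1)$-parts of $T\widetilde{M}\otimes\C$, i.e.\ to \emph{anticommute} with $\widetilde{I}$, which is exactly the content of \eqref{eq:nabla I}. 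Consequently $[\widetilde{\eta}\wedge\widetilde{\eta}]$ is a wedge-bracket of two endomorphism-valued forms that anticommute with $\widetilde{I}$, so it lands back in the commuting part, whereas $\widetilde{d}^{LC}\widetilde{\eta}$ is the covariant derivative via a connection preserving $\widetilde{I}$ of a tensor anticommuting with $\widetilde{I}$, hence still anticommutes with $\widetilde{I}$.

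Therefore $\widetilde{\Omega}^{LC}+\tfrac{1}{2}[\widetilde{\eta}\wedge\widetilde{\eta}]$ and $\widetilde{d}^{LC}\widetilde{\eta}$ live in the complementary subbundles of $\End(T\widetilde{M})$-valued $2$-forms determined by the involution $A\mapsto \widetilde{I}A\widetilde{I}^{-1}$, and their sum vanishing forces each summand to vanish separately. The converse direction is immediate by adding the two equations. The main obstacle is the bookkeeping of signs in checking that $\widetilde{\eta}$ anticommutes with $\widetilde{I}$ and that this behaviour is inherited by $\widetilde{d}^{LC}\widetilde{\eta}$ and flipped by $[\widetilde{\eta}\wedge\widetilde{\eta}]$; once this is set up, the rest is a clean eigenspace decomposition argument.
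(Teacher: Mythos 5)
Your argument is correct and is essentially the proof the paper relies on: the paper does not spell it out but refers to Freed's Proposition 1.34 (and Baues--Cort\'es), whose argument is exactly this splitting of $\Omega^{\nabla}=\widetilde{\Omega}^{LC}+\widetilde{d}^{LC}\widetilde{\eta}+\frac{1}{2}[\widetilde{\eta}\wedge\widetilde{\eta}]$ according to the $\pm1$-eigenspaces of conjugation by $\widetilde{I}$, using that $\widetilde{\Omega}^{LC}$ and $[\widetilde{\eta}\wedge\widetilde{\eta}]$ commute with $\widetilde{I}$ while $\widetilde{d}^{LC}\widetilde{\eta}$ anticommutes. No gaps worth flagging.
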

\section{Conic and projective special K\"{a}hler metrics}
In this section we will consider the case of a projective special K\"{a}hler manifold $(\pi\colon\widetilde{M}\to M,\nabla)$ and we will give the explicit relation between the metric on $\widetilde{M}$ and the one on $M$ (see e.g.\ \cite[Section 1.1]{CortesCompProj}).

The mapping $\pi\colon\widetilde{M}\to M$ is a $\C^*$-principal bundle with infinitesimal principal action generated by $\xi$ and $\widetilde{I}\xi$.
We can always build the function $r=\sqrt{-\widetilde{g}(\xi,\xi)}\colon\widetilde{M}\to\R^+$ and define $S=r^{-1}(1)\subseteq \widetilde{M}$ with inclusion map $\iota_S\colon S\hookrightarrow \widetilde{M}$.
Now $r$ has no critical points, since
\begin{align}\label{eq:dr}
dr
&=\frac{d(r^2)}{2r}
=\frac{\widetilde{\nabla}^{LC}(r^2)}{2r}
=-\frac{\widetilde{\nabla}^{LC}(\widetilde{g}(\xi,\xi))}{2r}\\
&=-\frac{2\widetilde{g}(\widetilde{\nabla}^{LC} \xi,\xi)}{2r}
=-\frac{\widetilde{g}(\cdot,\xi)}{r}
=-\frac{1}{r}\xi^{\flat}
\end{align}
and $\widetilde{g}$ is non-degenerate.
It follows that $S$ is a submanifold of dimension $2n+1$ whose tangent bundle corresponds to $\ker(dr)\subset T\widetilde{M}$.
Notice that $dr(\widetilde{I}\xi)=-\frac{\widetilde{g}(\widetilde{I}\xi,\xi)}{r}=-\frac{\widetilde{\omega}(\xi,\xi)}{r}=0$, so $\widetilde{I}\xi$ is a vector field tangent to $S$ and it induces a principal $\U(1)$-action.
The induced metric on $S$ is $g_S=\iota_S^* \widetilde{g}$ and thus $\Lie{\widetilde{I}\xi}g_S=\iota_S^*\Lie{\widetilde{I}\xi}\widetilde{g}=0$.

The principal action of $\C^*$ on $\widetilde{M}$ induces by inclusion an $\R^+$-action, and in addition we have
\begin{lemma}
The map $r\colon\widetilde{M}\to\R^+$ is degree $1$ homogeneous with respect to the action of $\R^+\subseteq\C^*$ on $\widetilde{M}$, i.e.\ for all $s\in\R^+$ and $p\in\widetilde{M}$
\begin{equation}
r(ps)=r(p)s.
\end{equation}
\end{lemma}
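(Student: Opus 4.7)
My plan is to reduce the homogeneity of $r$ to the statement $\xi \cdot r = r$, and then integrate along the flow of $\xi$, which coincides with the $\R^+$-action by definition of the fundamental vector field.

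First, I would compute $\xi \cdot r$. The cleanest route is to apply the already-established identity \eqref{eq:dr}: $dr = -\xi^{\flat}/r$. Evaluating on $\xi$ gives
\begin{equation}
\xi \cdot r = dr(\xi) = -\frac{\widetilde{g}(\xi,\xi)}{r} = \frac{r^{2}}{r} = r.
\end{equation}
(Equivalently, one could use Lemma \ref{lemma:azioneInfinitesimaXieIxi}, which gives $\Lie{\xi}\widetilde{g} = 2\widetilde{g}$, hence $\Lie{\xi}(r^{2}) = -\Lie{\xi}\widetilde{g}(\xi,\xi) = -2\widetilde{g}(\xi,\xi) = 2r^{2}$, since $[\xi,\xi]=0$.)

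Next, let $\phi_{t}$ denote the flow of $\xi$. The relation $\xi \cdot r = r$ translates into the ODE
\begin{equation}
\frac{d}{dt}r(\phi_{t}(p)) = r(\phi_{t}(p)),
\end{equation}
whose solution with initial value $r(p)$ is $r(\phi_{t}(p)) = r(p)\,e^{t}$.

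Finally, I identify $\phi_{t}$ with the $\R^{+}$-action. Since by hypothesis $\xi$ is the fundamental vector field associated to $1 \in \C = \mathrm{Lie}(\C^{*})$ and the exponential map of $\C^{*}$ sends $t \in \R \subset \C$ to $e^{t} \in \R^{+} \subset \C^{*}$, the flow of $\xi$ at time $t$ is precisely $\phi_{t}(p) = p \cdot e^{t}$. Setting $s = e^{t}$ gives $r(ps) = r(p)\,s$ for every $s \in \R^{+}$, as required.

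There is no real obstacle here: once one has $\xi \cdot r = r$, everything is a one-line ODE plus the tautological identification of the flow with the action. The only subtle point to mind is the sign in $r^{2} = -\widetilde{g}(\xi,\xi)$ forced by condition \ref{cond:segnaturaPSK} of Definition \ref{def:CSK}, which is exactly what makes $\xi \cdot r$ come out to $+r$ rather than $-r$.
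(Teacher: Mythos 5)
Your argument is correct. Note that the paper states this lemma without proof, evidently regarding it as immediate from the fact that $\xi$ generates the $\R^+$-action and is a homothety; your write-up supplies exactly the natural missing argument. The key computation $dr(\xi)=-\tfrac{1}{r}\widetilde{g}(\xi,\xi)=r$ is right (including the sign forced by $r^2=-\widetilde{g}(\xi,\xi)$), the identification of the flow of $\xi$ with $p\mapsto p e^{t}$ is exactly the definition of the fundamental vector field for $1\in\C=\mathrm{Lie}(\C^*)$ (and this flow is automatically complete, so the ODE argument applies for all $t$), and integrating $\tfrac{d}{dt}r(\phi_t(p))=r(\phi_t(p))$ with $s=e^{t}$ gives the stated degree-$1$ homogeneity; your parenthetical alternative via $\Lie{\xi}\widetilde{g}=2\widetilde{g}$ from Lemma \ref{lemma:azioneInfinitesimaXieIxi} is an equally valid route to the same infinitesimal identity.
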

As a consequence of this lemma, we can now define a retraction
\begin{align*}
p\colon\widetilde{M}&\longrightarrow S,\qquad u\longmapsto u\frac{1}{r(u)},
\end{align*}
which is well defined since $r(p(u))=r(u\frac{1}{r(u)})=\frac{r(u)}{r(u)}=1$.
Moreover, $p \iota_S=\id_S$ implies the surjectivity of $p$, which allows us to see $p\colon\widetilde{M}\to S$ as a principal $\R^+$-bundle and $\pi_S:=\pi\iota_S\colon S\to M$ as a principal $S^1$-bundle; the composition of the two gives $\pi$.

\begin{lemma}
If $(\pi\colon\widetilde{M}\to M,\nabla)$ is projective special K\"{a}hler, then $\widetilde{M}$ is diffeomorphic to $S\times \R^+$, and moreover
\begin{equation}
\widetilde{g}=r^2p^*g_S-dr^2.
\end{equation}
\end{lemma}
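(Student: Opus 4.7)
The plan is to realise $\widetilde{M}$ as the trivial $\R^+$-bundle over $S$ using the pair $(p,r)$, and then to establish the metric identity by showing both sides transform identically under the flow of $\xi$ and coincide on the slice $S$.

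For the diffeomorphism, I would consider the smooth map $\Psi = (p,r)\colon \widetilde{M} \to S\times\R^+$ with candidate inverse $\Phi(s,t) = s\cdot t$ defined via the $\R^+ \subset \C^*$-action. The identities $r(s\cdot t) = r(s)t = t$ (from the previous lemma) and $p(s\cdot t) = (s\cdot t)/r(s\cdot t) = s$, together with $p(u)\cdot r(u) = u$ (the defining equation of $p$), show that $\Psi$ and $\Phi$ are mutually inverse smooth maps.

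For the metric identity, let $h := r^2 p^* g_S - dr^2$. I would first verify $\Lie{\xi} \widetilde{g} = 2\widetilde{g}$ and $\Lie{\xi} h = 2h$: the former is Lemma \ref{lemma:azioneInfinitesimaXieIxi}, while for the latter, the $\R^+$-invariance of $p$ (from $p\circ\phi_s = p$, since the scaling cancels in $u\mapsto u/r(u)$) gives $\Lie{\xi} p^* g_S = 0$, and \eqref{eq:dr} together with $\widetilde{\nabla}^{LC}\xi = \id$ gives $\xi(r) = r$, hence $\Lie{\xi} r^2 = 2r^2$ and $\Lie{\xi} dr^2 = 2\, dr^2$. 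I would then verify that $\widetilde{g}$ and $h$ agree at points of $S$ using the splitting $T\widetilde{M}|_S = TS\oplus \R\xi$: on $TS$ both reduce to $g_S$ (using $p\iota_S = \id_S$ and $r|_S = 1$); on $\R\xi$ both yield $-1$ (from $\widetilde{g}(\xi,\xi) = -r^2 = -1$ and $dr(\xi) = r = 1$, together with $dp(\xi) = 0$); and cross-terms vanish because $\xi^\flat = -r\,dr$ annihilates $\ker dr = TS$. Since $S$ meets every $\R^+$-orbit by the first step, the common conformal behaviour under the flow of $\xi$ propagates the equality $\widetilde{g} = h$ to all of $\widetilde{M}$.

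The main source of care is the pointwise check at $S$, where one needs to track all three components of the splitting $T\widetilde{M}|_S = TS \oplus \R\xi$; aside from this, the computation reduces directly to Lemma \ref{lemma:azioneInfinitesimaXieIxi} and equation \eqref{eq:dr}.
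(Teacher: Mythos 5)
Your argument is correct, and for the metric identity it takes a genuinely different route from the paper. The paper does not start from the asserted formula: it forms the rescaled tensor $g'=\frac{1}{r^2}(\widetilde{g}+dr^2)$, shows it is \emph{basic} for the principal $\R^+$-bundle $p\colon\widetilde{M}\to S$ (horizontal, via $g'(\xi,\cdot)=0$ from \eqref{eq:dr}, and $\R^+$-invariant, via $\Lie{\xi}g'=0$), concludes $g'=p^*g''$ for some $g''$ on $S$, and only then identifies $g''=g_S$ by pulling back along $\iota_S$. You instead take the candidate $h=r^2p^*g_S-dr^2$ and verify $\widetilde{g}=h$ by a uniqueness-under-flow argument: both tensors are eigentensors of $\Lie{\xi}$ with weight $2$ (yours correctly uses Lemma \ref{lemma:azioneInfinitesimaXieIxi}, $\xi(r)=r$ from \eqref{eq:dr}, and the $\R^+$-invariance of $p$), they agree along $S$ after the pointwise check on the splitting $T\widetilde{M}|_S=TS\oplus\R\xi$ (which is legitimate since $dr(\xi)=1$ on $S$), and the flow of $\xi$ sweeps out all of $\widetilde{M}$ by the diffeomorphism established in the first step, so the difference, which scales by $e^{2t}$ under the flow and vanishes on $S$, vanishes identically. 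The diffeomorphism part is essentially identical to the paper's. What each approach buys: the paper's descent argument produces the formula without guessing it and avoids the componentwise check at $S$, while yours is a more hands-on verification that trades the basic-tensor machinery for an explicit pointwise comparison plus a short propagation argument; both ultimately rest on the same two inputs, the homothety property of $\xi$ and $\xi^{\flat}=-r\,dr$.
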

\begin{proof}
Let $a\colon S\times \R^+\to\widetilde{M}$ be the restriction of the principal right action $\widetilde{M}\times\R^+\to\widetilde{M}$ to $S\times \R^+$ and consider also $(p,r)\colon\widetilde{M}\to S\times\R^+$.
These maps are smooth and each an inverse to the other, in fact if $u\in\widetilde{M}$, $a(p,r)(u)=a(p(u),r(u))=u\frac{1}{r(u)}r(u)=u$ and for all $(q,s)\in S\times\R^+$, $(\pi_{S},r)a(q,s)=(p(qs),r(qs))=(q\frac{s}{r(qs)},r(q)s)=(q,s)$.

For the second statement consider the symmetric tensor
\begin{equation}
g'=\frac{1}{r^2}(\widetilde{g}+dr^2).
\end{equation}
We want to prove it is basic, that is horizontal and invariant with respect to the principal $\R^+$-action.

Since there is only one vertical direction, and since $g'$ is symmetric, it is enough to check whether $g'$ vanishes when evaluated on the fundamental vector field $\xi$ in one component.
Using \eqref{eq:dr} we obtain
\begin{align*}
g'(\xi,\cdot)
=\frac{1}{r}(\widetilde{g}(\xi,\cdot)+dr(\xi)dr)
=\frac{1}{r}(-rdr+r dr)
=0.
\end{align*}

And now for the $\R^+$-invariance:
\begin{align*}
\Lie{\xi}g'
&=-2\frac{\Lie{\xi}r}{r^3}(\widetilde{g}+dr^2)+\frac{1}{r^2}(\Lie{\xi}\widetilde{g}+2\Lie{\xi}(dr)dr)\\
&=-2\frac{dr(\xi)}{r^3}(\widetilde{g}+dr^2)+\frac{1}{r^2}(2\widetilde{g}+2(d\iota_\xi dr+\iota_\xi d^2r)dr)\\
&=-2\frac{r}{r^3}(\widetilde{g}+dr^2)+\frac{1}{r^2}(2\widetilde{g}+2dr^2)
=0.
\end{align*}

Therefore $g'$ is basic, which in turn implies it is of the form $p^*g''$ for some tensor $g''\in T_2S$, so that
\begin{equation}
\widetilde{g}=r^2p^*g''-dr^2.
\end{equation}

The proof is ended by the following observation:
\begin{equation*}
g_S
=\iota_S^* \widetilde{g}
=\iota_S^*\left(r^2p^*g''-dr^2\right)
=\iota_S^*p^*g''-\iota_S^*dr^2
=(p\iota_S)^*g''
=g''.
\qedhere
\end{equation*}
\end{proof}

The $\C^*$-bundle $\pi\colon\widetilde{M}\to M$ has a unique principal connection orthogonal to the fibres with respect to $\widetilde{g}$; the connection form can be written as
\begin{equation}\label{eq:C*connection}
\frac{dr}{r}+i\widetilde{\varphi}.
\end{equation}
Explicitly, we can describe $\widetilde{\varphi}$ using the metric:
\begin{equation}
\widetilde{\varphi}=\frac{\widetilde{g}(\widetilde{I}\xi,\cdot)}{\widetilde{g}(\widetilde{I}\xi,\widetilde{I}\xi)}=-\frac{1}{r^2}I\xi^{\flat}=-\frac{1}{r^2}\iota_{\xi}\widetilde{\omega}.
\end{equation}
If we restrict it to $S$, we obtain a connection form $\varphi=\iota_S^*\widetilde{\varphi}=-\iota_S^*(\iota_\xi\omega)$ corresponding to the $S^1$-action on $S$.

Notice that $p^*\varphi=\widetilde{\varphi}$, because the connection form \eqref{eq:C*connection} is right-invariant, so $\widetilde{\varphi}=p^*\varphi'$ for some $\varphi'$, and thus $\varphi=\iota_S^*\widetilde{\varphi}=\iota_S^*p^*\varphi'=(p\iota_S)^*\varphi'=\varphi'$.

The moment map for the action generated by $\widetilde{I}\xi$ is $\mu\colon\widetilde{M}\to \lie{u}(1)\cong\R$ such that $d\mu=\iota_{\widetilde{I}\xi}\omega=-\xi^{\flat}=rdr=d\left(\frac{r^2}{2}\right)$, so up to an additive constant, we can assume
\begin{equation}
\mu=\frac{r^2}{2}.
\end{equation}
Since $S=\mu^{-1}(\frac{1}{2})$ is a level set of the moment map and $M$ is the K\"{a}hler quotient, $\pi_{S}\colon S\to M$ is a pseudo-Riemannian submersion and thus we can write $g_S=\pi_{S}^*g-\varphi^2$.

\begin{prop}
A projective special K\"{a}hler manifold $(\pi\colon\widetilde{M}\to M,\nabla)$ satisfies
\begin{align*}
\widetilde{g}&=r^2\pi^*g-r^2\widetilde{\varphi}^2-dr^2,\\
\widetilde{\omega}&=r^2\pi^*\omega_M+r\widetilde{\varphi}\wedge dr.
\end{align*}
\end{prop}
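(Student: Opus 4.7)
The plan is to deduce both identities from the factorisation $\widetilde{g}=r^2p^*g_S-dr^2$ established in the preceding lemma, together with the pseudo-Riemannian submersion formula $g_S=\pi_S^*g-\varphi^2$ and the compatibility relations $p^*\varphi=\widetilde{\varphi}$ and $\pi_S\circ p=\pi$ (the last holding because $p(u)=u/r(u)$ stays on the same $\C^*$-orbit as $u$). Substituting gives
\begin{equation*}
\widetilde{g}=r^2p^*(\pi_S^*g-\varphi^2)-dr^2=r^2\pi^*g-r^2\widetilde{\varphi}^2-dr^2,
\end{equation*}
which settles the first identity.

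For the second identity I would use the defining relation $\widetilde{\omega}=\widetilde{g}(\widetilde{I}\cdot,\cdot)$ and apply the first formula to the pair $(\widetilde{I}X,Y)$. This reduces everything to three auxiliary computations. First, from $\widetilde{I}\xi$ being vertical and $\widetilde{I}$ preserving the vertical distribution $\spn{\xi,\widetilde{I}\xi}$, the almost complex structure $\widetilde{I}$ descends through $\pi$ to $I$, giving $\pi^*g(\widetilde{I}X,Y)=(\pi^*\omega_M)(X,Y)$. Second, from $dr=-\tfrac{1}{r}\xi^\flat$ together with $\widetilde{g}(\xi,\widetilde{I}X)=-\widetilde{g}(\widetilde{I}\xi,X)=r^2\widetilde{\varphi}(X)$, one derives
\begin{equation*}
dr(\widetilde{I}X)=-r\widetilde{\varphi}(X).
\end{equation*}
Third, the explicit expression $\widetilde{\varphi}=-\tfrac{1}{r^2}\widetilde{g}(\widetilde{I}\xi,\cdot)$ together with the $\widetilde{I}$-invariance of $\widetilde{g}$ gives
\begin{equation*}
\widetilde{\varphi}(\widetilde{I}X)=\tfrac{1}{r}dr(X).
\end{equation*}

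Plugging these three ingredients in and collecting the cross terms yields
\begin{equation*}
\widetilde{\omega}(X,Y)=r^2(\pi^*\omega_M)(X,Y)+r\bigl(\widetilde{\varphi}(X)dr(Y)-\widetilde{\varphi}(Y)dr(X)\bigr),
\end{equation*}
which is exactly $r^2\pi^*\omega_M+r\widetilde{\varphi}\wedge dr$. There is no conceptual obstacle here: the whole proof is a matter of substituting already-proved formulas and keeping track of signs coming from the negative-definite directions $\spn{\xi,\widetilde{I}\xi}$. The only point that requires mild care is checking that $\widetilde{I}$ genuinely projects to $I$, which is automatic since $M$ is built as the Kähler quotient and the vertical distribution is $\widetilde{I}$-invariant.
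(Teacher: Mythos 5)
Your proposal is correct and follows essentially the same route as the paper: the first identity by substituting $g_S=\pi_S^*g-\varphi^2$ into $\widetilde{g}=r^2p^*g_S-dr^2$, and the second by applying $\widetilde{\omega}=\widetilde{g}(\widetilde{I}\cdot,\cdot)$ together with the holomorphy of $\pi$ (the K\"ahler-quotient property) and the relation $r\widetilde{\varphi}\circ\widetilde{I}=dr$, which your two auxiliary identities $dr(\widetilde{I}X)=-r\widetilde{\varphi}(X)$ and $\widetilde{\varphi}(\widetilde{I}X)=\tfrac{1}{r}dr(X)$ encode. You merely write out explicitly the substitution that the paper leaves implicit.
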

\begin{proof}
From the previous arguments
\begin{align*}
\widetilde{g}
&=r^2 p^*g_S-dr^2
=r^2p^*(\pi_{S}^*g-\varphi^2)-dr^2\\
&=r^2(\pi_{S}p)^*g-r^2\widetilde{\varphi}^2-dr^2
=r^2\pi^*g-r^2\widetilde{\varphi}^2-dr^2.
\end{align*}
For the K\"{a}hler form it is enough to notice that $\pi$ is holomorphic, $M$ being a K\"{a}hler quotient, and that
\begin{equation*}
(r\widetilde{\varphi})\circ\widetilde{I}
=-\frac{1}{r}\widetilde{I}\xi^{\flat}\widetilde{I}
=-\frac{1}{r}\xi^{\flat}
=dr.
\qedhere
\end{equation*}
\end{proof}

For future reference we give the following
\begin{rmk}\label{rmk:curvatureSbundle}
The curvature of $\varphi$ is computed using Lemma \ref{lemma:azioneInfinitesimaXieIxi}:
\begin{equation}
d\varphi=-d\iota_S^*\iota_{\xi}\widetilde{\omega}=\iota_S^*(-\Lie{\xi}\widetilde{\omega}+\iota_{\xi}d\widetilde{\omega})=-2\iota_S^*\widetilde{\omega}=-2\pi_S^*\omega_M.
\end{equation}
in fact, the restriction to $S$ of $\widetilde{\omega}$ maps fixes $r=1$ and thus kills $dr$.

It will be useful to compute also
\begin{equation}
d\widetilde{\varphi}=-2\pi^*\omega_M.
\end{equation}
\end{rmk}
\section{Lifting the coframe}\label{sec:coframe}
The purpose of this section is to lift a generic unitary coframe on a projective special K\"{a}hler manifold to one on the corresponding conic special K\"{a}hler.
This will enable us to give a more explicit formulation of the Levi-Civita connection and associated curvature tensor on the conic special K\"{a}hler manifold.

In our convention, on a K\"{a}hler manifold $(M,g,I,\omega)$, the Hermitian form is $h=g+i\omega$.
Given a projective special K\"{a}hler manifold $(\pi\colon\widetilde{M}\to M,\nabla)$ and an open subset $U\subseteq M$, consider a unitary coframe $\theta=(\theta^1,\dots,\theta^n)\in \Omega^1(U,\C^n)$ on $M$, then we can build a coframe $\widetilde{\theta}\in \Omega^1(\pi^{-1}(U),\C^{n+1})$ on $\widetilde{M}$ as follows:
\begin{equation}\label{eq:coframe Mtilde}
\widetilde{\theta}^k=\begin{cases}
r\pi^*\theta^k&\mbox{if }k\le n\\
dr+i r\widetilde{\varphi}&\mbox{if }k=n+1
\end{cases}.
\end{equation}
This coframe is compatible with the $\U(n,1)$-structure because it takes complex values and
\begin{equation}
\sum_{k=1}^n\overline{\widetilde{\theta}^k}\widetilde{\theta}^k-\overline{\widetilde{\theta}^{n+1}}\widetilde{\theta}^{n+1}
=r^2\pi^*\left(\sum_{k=1}^n\overline{\theta^k}\theta^k\right)-dr^2-r^2\widetilde{\varphi}^2
=\widetilde{g}.
\end{equation}

We will denote the dual frame to a given coframe by the same symbol, but with lower indices.

\begin{rmk}
Let $T=\C^{n+1}$ be the standard real representation of $\U(n,1)$, and let $T\otimes_{\R}\C \cong T^{1,0}\oplus T^{0,1}$ be the holomorphic, anti-holomorphic split.
Given a connection on a K\"{a}hler manifold, it can be represented by a connection form $\omega$ with values in $\lie{u}(n,1)$ whose complexification is $\lie{gl}(n+1,\C)\cong T^{1,0}\otimes T_{1,0}\oplus T^{0,1}\otimes T_{1,0}$, so we obtain projections in each component, respectively $\omega^{1,0}_{1,0}$ and $\omega^{0,1}_{0,1}$ such that $\omega=\omega^{1,0}_{1,0}+\omega^{0,1}_{0,1}$.
Notice that $\omega^{0,1}_{0,1}=\overline{\omega^{1,0}_{1,0}}$ because $\omega$ comes from a real representation and to give the first component is equivalent to give the whole form.
Notice also that $([\![T]\!],I)$, as complex representation, is isomorphic to $T^{1,0}$ and the component $A^{1,0}_{1,0}$ of an endomorphism $A$ gives the corresponding endomorphism of $T^{1,0}$.
We will often present connection forms by giving only the $T^{1,0}_{1,0}$ component.

We will call $\RE$ the projection from the complex tensor algebra to the real representation, defined so that $\RE(\alpha)=\alpha+\overline{\alpha}$ where the conjugate is the real structure.
\end{rmk}
\begin{prop}\label{prop:conicLC}
Let $(\pi\colon\widetilde{M}\to M,\nabla)$ be a projective special K\"{a}hler manifold, let $(U,\theta)$ be a local unitary coframe on $M$ lifted as in \eqref{eq:coframe Mtilde} to a coframe $\widetilde{\theta}$ adapted to the $\U(n,1)$-structure on $\widetilde{M}$.
With respect to $\widetilde{\theta}$, the Levi-Civita connection form on $\widetilde{M}$ is represented by
\begin{equation}
\widetilde{\omega}^{LC}
=\begin{pmatrix}
\pi^*\omega^{LC}& \rvline&0\\
\hline
0& \rvline&0
\end{pmatrix}+\frac{1}{r}\begin{pmatrix}
i\Im\left(\widetilde{\theta}^{n+1}\right)&&0& \rvline&\widetilde{\theta}^1\\
&\ddots&& \rvline&\vdots\\
0&&i\Im\left(\widetilde{\theta}^{n+1}\right)& \rvline&\widetilde{\theta}^n\\
\hline
\overline{\widetilde{\theta}^1}&\cdots&\overline{\widetilde{\theta}^n}& \rvline&i\Im\left(\widetilde{\theta}^{n+1}\right)
\end{pmatrix},
\end{equation}
that is
\begin{equation}\label{eq:LeviCivita}
\widetilde{\omega}^{LC}
=\begin{pmatrix}
\pi^*\omega^{LC}+i\widetilde{\varphi}\otimes I_n& \rvline&\pi^*\theta\\
\hline
\pi^*\theta^{\star}& \rvline&i\widetilde{\varphi}
\end{pmatrix}
\end{equation}
and its curvature form is
\begin{equation}
\widetilde{\Omega}^{LC}=\begin{pmatrix}
\pi^*(\Omega^{LC}+\theta\wedge\theta^*-2i\omega_M \otimes\id)& \rvline&0\\
\hline
0& \rvline&0
\end{pmatrix}.
\end{equation}
\end{prop}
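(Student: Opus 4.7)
The plan is to invoke the uniqueness of the Levi-Civita connection: because $\widetilde{\theta}$ is adapted to the $\U(n,1)$-structure on $\widetilde{M}$, there is a unique $1$-form with values in $\lie{u}(n,1)$ satisfying the first Cartan structure equation $d\widetilde{\theta}+\widetilde{\omega}^{LC}\wedge\widetilde{\theta}=0$. So it suffices to (i) check that the matrix $\widetilde{A}$ on the right-hand side of \eqref{eq:LeviCivita} lies in $\lie{u}(n,1)$, and (ii) verify that $d\widetilde{\theta}+\widetilde{A}\wedge\widetilde{\theta}=0$. Step (i) is algebraic: using the signature $\mathrm{diag}(1,\dots,1,-1)$, one needs $\widetilde{A}^k_j+\overline{\widetilde{A}^j_k}=0$ for $j,k\le n$, $\widetilde{A}^{n+1}_k=\overline{\widetilde{A}^k_{n+1}}$, and the diagonal entries purely imaginary; each of these follows at once from the proposed expression, since $\pi^*\omega^{LC}\in \pi^*\lie{u}(n)$ and $\widetilde{\varphi}$ is real.

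For step (ii) I compute $d\widetilde{\theta}$ component by component. For $k\le n$, using $\widetilde{\theta}^k=r\pi^*\theta^k$ and the structure equation $d\theta^k=-\omega^{LC,k}_j\wedge\theta^j$ on $M$, I obtain $d\widetilde{\theta}^k=\tfrac{1}{r}dr\wedge\widetilde{\theta}^k-\pi^*\omega^{LC,k}_j\wedge\widetilde{\theta}^j$. For $k=n+1$, from $\widetilde{\theta}^{n+1}=dr+ir\widetilde{\varphi}$ and the Remark \ref{rmk:curvatureSbundle} equation $d\widetilde{\varphi}=-2\pi^*\omega_M$ together with $\omega_M=\tfrac{i}{2}\sum_k\theta^k\wedge\overline{\theta^k}$, a short computation gives $d\widetilde{\theta}^{n+1}=\tfrac{1}{2r}\overline{\widetilde{\theta}^{n+1}}\wedge\widetilde{\theta}^{n+1}+\tfrac{1}{r}\sum_k\widetilde{\theta}^k\wedge\overline{\widetilde{\theta}^k}$. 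Expanding $\widetilde{A}\wedge\widetilde{\theta}$ and rewriting $dr$ and $ir\widetilde{\varphi}$ in terms of $\widetilde{\theta}^{n+1}$ and $\overline{\widetilde{\theta}^{n+1}}$, I match both sides; the subtle point is that the diagonal $i\widetilde{\varphi}$ contributions cancel against the $\widetilde{\omega}^{LC,k}_{n+1}\wedge\widetilde{\theta}^{n+1}$ term, leaving precisely $-d\widetilde{\theta}^k$.

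For the curvature formula, I compute $\widetilde{\Omega}^{LC}=d\widetilde{\omega}^{LC}+\widetilde{\omega}^{LC}\wedge\widetilde{\omega}^{LC}$ block by block. In the $(k,j)$-block with $j,k\le n$, the differential of $\pi^*\omega^{LC,k}_j$ plus the wedge $\pi^*\omega^{LC,k}_l\wedge\pi^*\omega^{LC,l}_j$ reproduce $\pi^*\Omega^{LC,k}_j$; the cross-terms between $\pi^*\omega^{LC}$ and $i\widetilde{\varphi}\otimes I_n$ cancel by symmetry; $d(i\widetilde{\varphi})\delta^k_j=-2i\pi^*\omega_M\delta^k_j$; and the contribution $\widetilde{\omega}^{LC,k}_{n+1}\wedge\widetilde{\omega}^{LC,n+1}_j=\tfrac{1}{r^2}\widetilde{\theta}^k\wedge\overline{\widetilde{\theta}^j}$ pulls back to $\pi^*(\theta^k\wedge\overline{\theta^j})$. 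Summing these yields exactly the asserted $(k,j)$-block. For the $(n+1,n+1)$-entry, $d(i\widetilde{\varphi})=-2i\pi^*\omega_M$ cancels against $\tfrac{1}{r^2}\sum_j\overline{\widetilde{\theta}^j}\wedge\widetilde{\theta}^j=2i\pi^*\omega_M$. For the off-diagonal blocks, the key cancellation is that $d(\tfrac{1}{r}\widetilde{\theta}^k)=-\pi^*\omega^{LC,k}_j\wedge\pi^*\theta^j$ exactly offsets the sum $\widetilde{\omega}^{LC,k}_l\wedge\widetilde{\omega}^{LC,l}_{n+1}$ once the $\widetilde{\varphi}$-pieces annihilate each other; the $(n+1,j)$-block is analogous, using $\overline{\omega^{LC,j}_k}=-\omega^{LC,k}_j$.

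The main obstacle is bookkeeping: the lifted coframe entangles $dr$, $r\widetilde{\varphi}$ and the pulled-back coframe $\pi^*\theta$, so at each stage one must consistently rewrite $dr$ and $ir\widetilde{\varphi}$ as $\tfrac{1}{2}(\widetilde{\theta}^{n+1}\pm\overline{\widetilde{\theta}^{n+1}})$ to see the cancellations. The non-trivial cancellation in the $(n+1,n+1)$-entry, where the curvature of the $\U(1)$-connection $\widetilde{\varphi}$ is precisely compensated by the Kähler form of the base appearing through the dual frame contraction $\tfrac{1}{r^2}\sum_j\overline{\widetilde{\theta}^j}\wedge\widetilde{\theta}^j$, is what encodes the Kähler quotient structure in the formula.
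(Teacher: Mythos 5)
Your proposal is correct and follows essentially the same route as the paper: exhibit the candidate form, check it is $\lie{u}(n,1)$-valued (anti-Hermitian for the signature $(n,1)$ metric) and torsion-free via the first structure equation using $d\widetilde{\varphi}=-2\pi^*\omega_M$, conclude by uniqueness of the Levi-Civita connection, and then compute $\widetilde{\Omega}^{LC}=d\widetilde{\omega}^{LC}+\widetilde{\omega}^{LC}\wedge\widetilde{\omega}^{LC}$ block by block with the same cancellations (including the $(n+1,n+1)$ cancellation between $d(i\widetilde{\varphi})$ and $\sum_j\pi^*\overline{\theta^j}\wedge\pi^*\theta^j=2i\pi^*\omega_M$). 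The individual identities you quote ($d\widetilde{\theta}^{n+1}$, the off-diagonal cancellations, the $(k,j)$-block terms) all check out against the paper's computation.
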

\begin{proof}
The connection form \eqref{eq:LeviCivita} is metric if and only if the matrix is anti-Hermitian with respect to $\widetilde{g}$ and since $\omega^{LC}$ is anti-Hermitian with respect to $g$, we get
\begin{align*}
(\widetilde{\omega}^{LC})^{\star}
=\begin{pmatrix}
\pi^*(\omega^{LC})^{\star}-i\widetilde{\varphi}\otimes I_n& \rvline&-\pi^*\theta\\
\hline
-\pi^*\theta^{\star}& \rvline&-i\widetilde{\varphi}
\end{pmatrix}
=-\widetilde{\omega}^{LC}.
\end{align*}

The torsion form of this connection is $\widetilde{\Theta}^{LC}=d\widetilde{\theta}+\widetilde{\omega}^{LC}\wedge\widetilde{\theta}$, so for $1\le k\le n$
\begin{align*}
\left(\widetilde{\Theta}^{LC}\right)^k
&=d\widetilde{\theta}^k+\sum_{j=1}^{n}\left(\widetilde{\omega}^{LC}\right)^k_j\wedge
\widetilde{\theta}^j+\left(\widetilde{\omega}^{LC}\right)^k_{n+1}\wedge\widetilde{\theta}^{n+1}\\
&=d\left(r\pi^*\theta^k\right)+\sum_{j=1}^{n}\left(\pi^*(\omega^{LC})^k_j+i\widetilde{\varphi}\delta^k_j\right)\wedge
\left(r\pi^*\theta^j\right)+\pi^*\theta^k\wedge\widetilde{\theta}^{n+1}\\
&=r\pi^*(\Theta^{LC})^k+(dr+ir\widetilde{\varphi})\wedge \pi^*\theta^k+\pi^*\theta^k\wedge\widetilde{\theta}^{n+1}\\
&=0+\widetilde{\theta}^{n+1}\wedge \pi^*\theta^k+\pi^*\theta^k\wedge\widetilde{\theta}^{n+1}
=0.
\end{align*}
In the last component
\begin{align*}
(\widetilde{\Theta}^{LC})^{n+1}
&=d\widetilde{\theta}^{n+1}+\sum_{j=1}^n \pi^*\overline{\theta^j}\wedge r\pi^*\theta^j+i\widetilde{\varphi}\wedge \widetilde{\theta}^{n+1}\\
&=d(dr+ir\widetilde{\varphi})+r\pi^*\left(\sum_{j=1}^n\overline{\theta^j}\wedge\theta^j\right)+i\widetilde{\varphi}\wedge \widetilde{\theta}^{n+1}\\
&=idr\wedge\widetilde{\varphi}+ir(d\widetilde{\varphi}+2\pi^*\omega_M)+i\widetilde{\varphi}\wedge dr
=0.
\end{align*}
$\widetilde{\omega}^{LC}$ is metric and torsion-free, therefore by uniqueness it must be the Levi-Civita connection.

Now let us compute its curvature form $\widetilde{\Omega}^{LC}=d\widetilde{\omega}^{LC}+\widetilde{\omega}^{LC}\wedge\widetilde{\omega}^{LC}$.
For $1\le k,h\le n$ we have
\begin{align*}
\left(\widetilde{\Omega}^{LC}\right)^h_k
&=d(\widetilde{\omega}^{LC})^h_k+(\widetilde{\omega}^{LC})^h_j\wedge(\widetilde{\omega}^{LC})^j_k\\
&=d\pi^*(\omega^{LC})^h_k+id\widetilde{\varphi}\delta^h_k+\sum_{j=1}^n(\pi^*(\omega^{LC})^h_j+i\widetilde{\varphi}\delta^h_j)\wedge (\pi^*(\omega^{LC})^j_k+i\widetilde{\varphi}\delta^j_k)\\
&\quad +\pi^*\theta^h\wedge\pi^*\overline{\theta^k}\\
&=\pi^*d(\omega^{LC})^h_k-2i\pi^*\omega_M\delta^h_k+\pi^*((\omega^{LC})^h_j\wedge (\omega^{LC})^j_k\\
&\quad +i\widetilde{\varphi}\wedge \pi^*(\omega^{LC})^h_k+\pi^*(\omega^{LC})^h_k\wedge i\widetilde{\varphi}-\widetilde{\varphi}\wedge \widetilde{\varphi}\delta^h_k+\pi^*\theta^h\wedge\pi^*\overline{\theta^k}\\
&=\pi^*(\Omega^{LC})^h_k-2i\pi^*\omega_M\delta^h_k+\pi^*(\theta^h\wedge\overline{\theta^k})
\end{align*}
and
\begin{align*}
\left(\widetilde{\Omega}^{LC}\right)^h_{n+1}
&=d\pi^*\theta^h+\sum_{j=1}^n(\pi^*(\omega^{LC})^h_j+i\widetilde{\varphi}\delta^h_j)\wedge \pi^*\theta^j+\pi^*\theta^h\wedge i\widetilde{\varphi}=\pi^*\left(\Theta^{LC}\right)^h\\*
&=0.
\end{align*}
Since the curvature form must also be anti-Hermitian, we also get
\begin{align*}
\left(\widetilde{\Omega}^{LC}\right)^{n+1}_k
&=-\left(\left(\widetilde{\Omega}^{LC}\right)^\star\right)^{n+1}_k
=\overline{\left(\widetilde{\Omega}^{LC}\right)^k_{n+1}}
=0.
\end{align*}
Finally,
\begin{equation*}
\left(\widetilde{\Omega}^{LC}\right)^{n+1}_{n+1}
=id\widetilde{\varphi}+\sum_{j=1}^n\pi^*\overline{\theta^j}\wedge\pi^*\theta^j-\widetilde{\varphi}\wedge \widetilde{\varphi}
=id\widetilde{\varphi}+2i\pi^*\omega_M
=0.\qedhere
\end{equation*}
\end{proof}
\begin{rmk}\label{rmk:curvatureProj}
The tensor $\theta\wedge\theta^{\star}-2i\omega_M\otimes id$, or explicitly
\begin{equation}
\Omega_{\Pj^n_{\C}}:=\RE\left((\theta^k\wedge\overline{\theta^h})\otimes \theta_k\otimes \theta^{h}-(\overline{\theta^k}\wedge\theta^k)\otimes \theta_h\otimes \theta^h\right)
\end{equation}
is a curvature tensor of the complex projective space of dimension $n$; in fact, $\Omega_{\Pj_{\C}^n}$ is the curvature with respect to the Fubini-Study metric (see for example \cite[II, p.\ 277]{KN}).
In order to verify that $\Omega_{\Pj^n_{\C}}$ is exactly the curvature of the Fubini-Study rather than a multiple, we compute the Ricci tensor:
\begin{equation}\label{eq:RicciProj}
\Ric_{\Pj^n_{\C}}
=\RE\left(n\theta^h\otimes \overline{\theta^h}+\delta_{h,k}\theta^h\otimes\overline{\theta^k}\right)
=\RE\left((n+1)h\right)
=2(n+1)g.
\end{equation}
Then,
\begin{equation}\label{eq:ScalProj}
\scal_{\Pj_{\C}^n}=2(n+1).
\end{equation}
Thus $\Omega_{\Pj^n_{\C}}$ corresponds exactly to the curvature of $\Pj_{\C}^n$ with the Fubini-Study metric.
\end{rmk}

Now, whenever we have a smooth map $f\colon M\to N$ between Riemannian manifolds, we can extend the pull-back $f^*\colon T_\bullet N\to T_\bullet M$ on the covariant tensor algebra to the whole tensor algebra, using the musical isomorphisms in each contravariant component.
Explicitly, for $X$ vector field on $N$, we define $f^*X:=\sharp f^*\flat X=(f^*X^{\flat})_\sharp$.
Notice that this extension of the pull-back is still functorial, since if $f\colon M\to N$, $g\colon N\to L$ are smooth maps, then $f^*g^*X=\sharp f^*\flat \sharp g^*\flat X=\sharp f^* g^*\flat X=\sharp (gf)^*\flat X=(gf)^* X$.

Since $\widetilde{M}$ and $M$ are Riemannian manifolds, we have $\pi^*\colon T^{\bullet}_{\bullet}M\to T^{\bullet}_{\bullet}\widetilde{M}$, and in particular, for $1\le k\le n$ we have
\begin{align*}
\pi^*\theta_k
=(\pi^*\theta_k^{\flat})_\sharp
=\frac{1}{2}(\pi^*\overline{\theta^k})_\sharp
=\frac{1}{2r}(\overline{\widetilde{\theta}^k})_\sharp
=\frac{1}{r}\widetilde{\theta}_k.
\end{align*}

\begin{rmk}\label{rmk:Levi-Civita_CSK}
In this notation,
\begin{equation}
\widetilde{\Omega}^{LC}=r^2\pi^*(\Omega^{LC}+\Omega_{\Pj_{\C}^n}).
\end{equation}
\end{rmk}

\section{Deviance}
In this section we will continue the analysis of the tensor $\widetilde{\eta}$ started in section \ref{sec:differenceTensor}.
The aim is to reduce it to a locally defined tensor on $M$ that we call deviance.
We will then use it to give an explicit local description of the Ricci tensor and the scalar curvature.

\begin{lemma}\label{lemma:etaHorizontal}
On a projective special K\"{a}hler manifold $(\pi\colon\widetilde{M}\to M,\nabla)$, if $\widetilde{\eta}_X Y=\nabla_X Y-\widetilde{\nabla}^{LC}_X Y$, then $\flat_2\widetilde{\eta}$ is horizontal with respect to $\pi$.

In other words, $\flat_2(\widetilde{\eta})$ is a section of $\pi^*[\![S_{3,0}M]\!]\subset [\![S_{3,0}\widetilde{M}]\!]$.
Explicitly, $\widetilde{\eta}_v$, $\widetilde{\eta} v$ and $\widetilde{g}(\widetilde{\eta}, v)$ vanish for all $v\in\langle \xi,I\xi\rangle$.
\end{lemma}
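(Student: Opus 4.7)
The natural approach is to verify the horizontality on the two generators $\xi$ and $\widetilde{I}\xi$ of the vertical distribution, handling only one of the three ``slots'' in each case, and then to exploit the total symmetry of $\flat_2\widetilde{\eta}$ supplied by Lemma~\ref{lemma:eta_Symm} to transfer the vanishing to the other slots.

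First I would compute $\widetilde{\eta}_X\xi$ directly from the definition: condition~3 of Definition~\ref{def:CSK} gives $\nabla_X\xi=\widetilde{\nabla}^{LC}_X\xi=X$, so
\begin{equation*}
\widetilde{\eta}_X\xi=\nabla_X\xi-\widetilde{\nabla}^{LC}_X\xi=0
\end{equation*}
for every vector field $X$. Next I would check that $\widetilde{\eta}_X(\widetilde{I}\xi)=0$: Lemma~\ref{lemma:XeXi} yields $\nabla_X(\widetilde{I}\xi)=\widetilde{I}X$, while the Kähler identity $\widetilde{\nabla}^{LC}\widetilde{I}=0$ together with $\widetilde{\nabla}^{LC}\xi=\id$ gives $\widetilde{\nabla}^{LC}_X(\widetilde{I}\xi)=\widetilde{I}\,\widetilde{\nabla}^{LC}_X\xi=\widetilde{I}X$, and subtracting produces zero. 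By linearity, $\widetilde{\eta}_X v=0$ for every $v\in\langle\xi,\widetilde{I}\xi\rangle$.

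The final step is to upgrade these two partial vanishings to the full statement using Lemma~\ref{lemma:eta_Symm}. Since $\flat_2\widetilde{\eta}$ is a section of $[\![S_{3,0}\widetilde{M}]\!]$, the associated trilinear tensor $T(X,Y,Z):=\widetilde{g}(\widetilde{\eta}_X Y,Z)$ is totally symmetric in its three arguments. The identity $\widetilde{\eta}_X v=0$ for $v\in\langle\xi,\widetilde{I}\xi\rangle$ translates to $T(X,v,Z)=0$ for all $X,Z$; by symmetry this forces $T(v,Y,Z)=T(X,Y,v)=0$ as well. The first of these, combined with non-degeneracy of $\widetilde{g}$, yields $\widetilde{\eta}_v Y=0$; the last gives $\widetilde{g}(\widetilde{\eta}_X Y,v)=0$, which is the third vanishing in the statement. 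Finally, $\flat_2\widetilde{\eta}$ killing $\xi$ and $\widetilde{I}\xi$ in every slot means it descends to a section of $\pi^*[\![S_{3,0}M]\!]\subset[\![S_{3,0}\widetilde{M}]\!]$, concluding the proof.

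There is no real obstacle here beyond keeping the bookkeeping of the three slots straight: the content is entirely the two identities $\nabla\xi=\widetilde{\nabla}^{LC}\xi=\id$ and $\nabla(\widetilde{I}\xi)=\widetilde{I}=\widetilde{\nabla}^{LC}(\widetilde{I}\xi)$, together with the previously established total symmetry of $\flat_2\widetilde{\eta}$.
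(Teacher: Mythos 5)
Your proof is correct and follows essentially the same route as the paper: one shows that $\widetilde{\eta}$ annihilates $\xi$ and $\widetilde{I}\xi$ in the argument slot, then transfers the vanishing to the remaining slots via the total symmetry of $\flat_2\widetilde{\eta}$ from Lemma~\ref{lemma:eta_Symm}, and concludes by linearity. The only (harmless) variation is in the $\widetilde{I}\xi$ step, where the paper invokes the identity $\nabla\widetilde{I}=[\widetilde{\eta},\widetilde{I}]=-2\widetilde{I}\widetilde{\eta}$ of \eqref{eq:nabla I}, while you instead combine Lemma~\ref{lemma:XeXi} with $\widetilde{\nabla}^{LC}\widetilde{I}=0$ and $\widetilde{\nabla}^{LC}\xi=\id$; both yield $\widetilde{\eta}(\widetilde{I}\xi)=0$ in one line.
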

\begin{proof}
First notice that $\widetilde{\eta} (\xi)=\nabla \xi- \widetilde{\nabla}^{LC} \xi=0$, so by symmetry $\widetilde{\eta}_{\xi}=0$ and $g(\eta,\xi)=0$, so $\flat_2(\widetilde{\eta})$ in each component vanishes when evaluated at $\xi$.
From this fact and \eqref{eq:nabla I}, we also deduce $\widetilde{\eta} (\widetilde{I}\xi)=\widetilde{I}\widetilde{\eta} (\xi)+[\widetilde{\eta},\widetilde{I}] \xi=0-2\widetilde{I} \widetilde{\eta} (\xi)=0$.
By symmetry, we conclude that $\flat_2\widetilde{\eta}$ vanishes in every component on $I\xi$.
Linearity then completes the proof.
\end{proof}

\begin{lemma}\label{lemma:LieDerivativesEta}
Let $(\widetilde{M},\widetilde{g},\widetilde{I},\widetilde{\omega},\nabla,\xi)$ be a conic special K\"{a}hler manifold and $\widetilde{\eta}$ be as above, then
\begin{enumerate}
\item $\Lie{\xi}\widetilde{\eta}=0$;
\item $\Lie{\widetilde{I}\xi}\widetilde{\eta}=-2\widetilde{I}\widetilde{\eta}$.
\end{enumerate}
\end{lemma}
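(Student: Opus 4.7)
The key observation is that $\widetilde{\eta}=\nabla-\widetilde{\nabla}^{LC}$ is a genuine $(1,2)$-tensor, so the Lie derivative distributes: $\Lie{X}\widetilde{\eta}=\Lie{X}\nabla-\Lie{X}\widetilde{\nabla}^{LC}$, where each $\Lie{X}(\cdot)$ is interpreted as the tensor $(Y,Z)\mapsto [X,\nabla_Y Z]-\nabla_{[X,Y]}Z-\nabla_Y[X,Z]$ (and analogously for the Levi-Civita connection). The plan is to compute both pieces separately, exploiting the special properties of $\xi$ and $\widetilde{I}\xi$ recorded in Lemma \ref{lemma:azioneInfinitesimaXieIxi} and Lemma \ref{lemma:XeXi}.

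For $\xi$: since $\xi$ is a homothety of constant scaling factor $2$, its flow rescales $\widetilde{g}$ by a constant; as the Levi-Civita connection is invariant under constant conformal rescaling, this immediately gives $\Lie{\xi}\widetilde{\nabla}^{LC}=0$. For the flat connection, I would substitute $[\xi,W]=\nabla_{\xi}W-W$ (coming from $\nabla\xi=\id$ and torsion-freeness) together with the flatness identity $\nabla_{\xi}\nabla_Y Z-\nabla_Y\nabla_{\xi}Z=\nabla_{[\xi,Y]}Z$ into the formula for $\Lie{\xi}\nabla$; a direct cancellation then yields $\Lie{\xi}\nabla=0$, hence $\Lie{\xi}\widetilde{\eta}=0$.

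For $\widetilde{I}\xi$: this vector field is Killing, so $\Lie{\widetilde{I}\xi}\widetilde{\nabla}^{LC}=0$ by the same general principle. The computation of $\Lie{\widetilde{I}\xi}\nabla$ parallels the previous one, but now Lemma \ref{lemma:XeXi} yields $[\widetilde{I}\xi,W]=\nabla_{\widetilde{I}\xi}W-\widetilde{I}W$. After plugging this into the defining formula and applying flatness, all the tangential terms cancel and one is left with $\nabla_Y(\widetilde{I}Z)-\widetilde{I}\nabla_Y Z=(\nabla_Y\widetilde{I})Z$. Invoking \eqref{eq:nabla I}, this equals $-2\widetilde{I}(\widetilde{\eta}_Y Z)$, giving the desired $\Lie{\widetilde{I}\xi}\widetilde{\eta}=-2\widetilde{I}\widetilde{\eta}$.

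No single step is genuinely hard; the main care is keeping track of signs and of which term sits in which argument of $\nabla$. The only non-automatic feature is that, unlike for $\xi$, the cancellations for $\widetilde{I}\xi$ leave behind the tensor $\nabla\widetilde{I}$, which must be recognised and then converted into a multiple of $\widetilde{I}\widetilde{\eta}$ via the already-established identity \eqref{eq:nabla I}.
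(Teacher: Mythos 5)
Your argument is correct, and it follows the same overall skeleton as the paper (compute the Lie derivative of each of the two connections, use $\nabla\xi=\id$, $\nabla(\widetilde{I}\xi)=\widetilde{I}$, flatness of $\nabla$, and finally \eqref{eq:nabla I}), but it disposes of the Levi-Civita contribution differently. The paper applies one general identity for torsion-free connections, valid for both $\nabla$ and $\widetilde{\nabla}^{LC}$, which reduces everything to curvature terms; the flat one dies by hypothesis, while the term $\widetilde{\Omega}^{LC}(\xi,X)Y$ (and likewise with $\widetilde{I}\xi$) is shown to vanish by hand, using the pair symmetry of the Riemann tensor together with $\widetilde{\nabla}^{LC}\xi=\id$ and torsion-freeness, so that the proof is self-contained and yields the identity $\widetilde{\Omega}^{LC}(\xi,\cdot)\,\cdot=0$ as a byproduct. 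You instead write $\Lie{X}\widetilde{\eta}=\Lie{X}\nabla-\Lie{X}\widetilde{\nabla}^{LC}$ and kill the Levi-Civita halves by the standard facts that homothetic vector fields (constant scaling, Lemma \ref{lemma:azioneInfinitesimaXieIxi}) and Killing fields are affine for the Levi-Civita connection, reserving the explicit bracket substitutions $[\xi,W]=\nabla_\xi W-W$ and $[\widetilde{I}\xi,W]=\nabla_{\widetilde{I}\xi}W-\widetilde{I}W$ (Lemma \ref{lemma:XeXi}) for the flat connection only; the cancellations you describe do work out, leaving $0$ in the first case and $\nabla\widetilde{I}=-2\widetilde{I}\widetilde{\eta}$ in the second, exactly as in the paper. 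What your route buys is brevity and conceptual transparency (no curvature identities to verify), at the cost of importing the textbook results on affine vector fields; the paper's route is longer but entirely explicit and records the vanishing of $\widetilde{\Omega}^{LC}$ contracted with $\xi$, which is of independent interest. One small point worth making explicit if you write this up: the homothety argument for $\Lie{\xi}\widetilde{\nabla}^{LC}=0$ uses that the scaling factor $2$ is a constant, so the (local) flow rescales $\widetilde{g}$ by spatial constants $e^{2t}$, and the Koszul formula is invariant under such rescalings.
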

\begin{proof}
The proof relies on a generic formula satisfied by a torsion-free connection $D$ (see e.g.\ \cite[equation (3.1), p.\ 1336]{Swann2015}), that is:
\begin{align*}
\Lie{A}(D_X Y)&-D_{\Lie{A}X} Y-D_X \Lie{A}Y=\Omega^D(A,X)Y-D_{D_{X}Y}A+D_{X}D_{Y} A.
\end{align*}
\begin{enumerate}
\item We check the formula on vector fields $X,Y\in\field{\widetilde{M}}$
\begin{align*}
(\Lie{\xi}\widetilde{\eta})_{X} Y
&=\Lie{\xi}(\widetilde{\eta}_{X} Y)-\widetilde{\eta}_{\Lie{\xi} X}Y-\widetilde{\eta}_{X}\Lie{\xi} Y\\
&=\Lie{\xi}\nabla_X Y-\Lie{\xi}\widetilde{\nabla}^{LC}_X Y-\nabla_{\Lie{\xi}X} Y+\widetilde{\nabla}^{LC}_{\Lie{\xi}X} Y\\
&\quad -\nabla_X \Lie{\xi}Y+\widetilde{\nabla}^{LC}_X \Lie{\xi}Y\\
&=\Omega^{\nabla}(\xi,X)Y-\nabla_{\nabla_{X}Y}\xi+\nabla_{X}\nabla_{Y} \xi-\widetilde{\Omega}^{LC}(\xi,X)Y\\
&\quad +\widetilde{\nabla}^{LC}_{\widetilde{\nabla}^{LC}_{X}Y}\xi-\widetilde{\nabla}^{LC}_{X}\widetilde{\nabla}^{LC}_{Y} \xi\\
&= -\nabla_{X}Y+\nabla_{X}Y-\widetilde{\Omega}^{LC}(\xi,X)Y+\widetilde{\nabla}^{LC}_{X}Y-\widetilde{\nabla}^{LC}_{X}Y\\
&=-\widetilde{\Omega}^{LC}(\xi,X)Y.
\end{align*}
Lowering the contravariant index of the curvature form, for $Z\in\field{\widetilde{M}}$, thanks to the symmetries of the Riemannian tensor we obtain
\begin{align*}
\widetilde{g}\left(\widetilde{\Omega}^{LC}(\xi,X)Y,Z\right)
&=\widetilde{g}\left(\widetilde{\Omega}^{LC}(Y,Z)\xi,X\right)\\
&=\widetilde{g}\left(\widetilde{\nabla}^{LC}_Y \widetilde{\nabla}^{LC}_Z\xi-\widetilde{\nabla}^{LC}_Z \widetilde{\nabla}^{LC}_Y\xi-\widetilde{\nabla}^{LC}_{[Y,Z]}\xi,X\right)\\
&=\widetilde{g}\left(\widetilde{\nabla}^{LC}_Y Z-\widetilde{\nabla}^{LC}_Z Y-[Y,Z],X\right)\\
&=\widetilde{g}\left(\Theta^{LC}(Y,Z),X\right)
=0,
\end{align*}
proving that $\widetilde{\Omega}^{LC}(\xi,X)Y=0$, which implies the statement.
\item As before,
\begin{align*}
(\Lie{\widetilde{I}\xi}\widetilde{\eta})_{X} Y
&=\Omega^{\nabla}(\widetilde{I}\xi,X)Y-\nabla_{\nabla_{X}Y}(\widetilde{I}\xi)+\nabla_{X}\nabla_{Y} (\widetilde{I}\xi)-\widetilde{\Omega}^{LC}(\widetilde{I}\xi,X)Y\\
&\quad +\widetilde{\nabla}^{LC}_{\widetilde{\nabla}^{LC}_{X}Y}(\widetilde{I}\xi)-\widetilde{\nabla}^{LC}_{X}\widetilde{\nabla}^{LC}_{Y} (\widetilde{I}\xi)\\
&=-\widetilde{I}\nabla_{X}Y+\nabla_{X}(\widetilde{I}Y)-\widetilde{\Omega}^{LC}(\widetilde{I}\xi,X)Y+\widetilde{I}\widetilde{\nabla}^{LC}_{X}Y-\widetilde{\nabla}^{LC}_{X}(\widetilde{I}Y)\\
&=(\nabla \widetilde{I})(X,Y)-\widetilde{\Omega}^{LC}(\widetilde{I}\xi,X)Y.
\end{align*}
Proceeding as in the previous point
\begin{align*}
\widetilde{g}&\left(\widetilde{\Omega}^{LC}(\widetilde{I}\xi,X)Y,Z\right)
=\widetilde{g}\left(\widetilde{\Omega}^{LC}(Y,Z)(\widetilde{I}\xi),X\right)\\
&=\widetilde{g}\left(\widetilde{\nabla}^{LC}_Y \widetilde{\nabla}^{LC}_Z(\widetilde{I}\xi)-\widetilde{\nabla}^{LC}_Z \widetilde{\nabla}^{LC}_Y(\widetilde{I}\xi)-\widetilde{\nabla}^{LC}_{[Y,Z]}(\widetilde{I}\xi),X\right)\\
&=\widetilde{g}\left(\widetilde{I}\widetilde{\Omega}^{LC}(Y,Z)\xi,X\right)
=-\widetilde{g}\left(\widetilde{\Omega}^{LC}(Y,Z)\xi,IX\right)\\
&=-\widetilde{g}\left(\widetilde{\Omega}^{LC}(\xi,\widetilde{I}X)Y,Z\right).
\end{align*}
This quantity is zero as shown in the previous point, so it follows that $\Lie{\widetilde{I}\xi}\widetilde{\eta}=\nabla \widetilde{I}$, so \eqref{eq:nabla I} ends the proof.\qedhere
\end{enumerate}
\end{proof}

We can now use a coframe $\widetilde{\theta}$ as in section \ref{sec:coframe} in order to progress in the study of $\widetilde{\eta}$.
We then write
\begin{equation}
\widetilde{\eta}=\RE(\widetilde{\eta}^j_{k,h}\widetilde{\theta}^k\otimes \overline{\widetilde{\theta}_j}\otimes \widetilde{\theta}^h).
\end{equation}
Since every operator we use is $\C$-linear, we can study only the component in $T_{1,0} \otimes T^{0,1}\otimes T_{1,0}$, that is $\widetilde{\eta}^j_{k,h}\widetilde{\theta}^k\otimes \overline{\widetilde{\theta}_j}\otimes \widetilde{\theta}^h$.
Because of Lemma \ref{lemma:etaHorizontal}, the coefficients $\widetilde{\eta}^{j}_{k,h}$ vanish if any one of the indices is $n+1$; moreover, $\widetilde{\eta}^j_{k,h}$ is completely symmetric in its indices.
The last statement follows from the fact that $\flat_2\widetilde{\eta}$ is a tensor in $\pi^*S_{3,0}M$, and such tensors are expressed using only $\pi^*\theta^k$ for $1\le k\le n$, where the metric is positive definite, and thus $\flat_2$ does not change the signs of the coefficients of $\widetilde{\eta}$.

We are now ready to reduce $\widetilde{\eta}$ to an object defined locally on the base space.
\begin{prop}\label{prop:ThetatildeWrtTheta}
Given a projective special K\"{a}hler $(\pi\colon\widetilde{M}\to M,\nabla)$ and a section $s\colon U\to S\subseteq\widetilde{M}$ inducing a trivialisation $(\pi|_{\pi^{-1}(U)},z)\colon \pi^{-1}(U)\to U\times\C^*$, there exists a tensor $\eta$ in $T_{1,0}U\otimes T^{0,1}U\otimes T_{1,0}U$ such that $\flat_2 \eta$ is a tensor in $S_{3,0}U$ and
\begin{equation}
\widetilde{\eta}=\RE(z^2\pi^*\eta)=r^2\cos(2\vartheta)2\Re\pi^*\eta+r^2\sin(2\vartheta)2\Im\pi^*\eta
\end{equation}
where $z=r e^{i\vartheta}$.
\end{prop}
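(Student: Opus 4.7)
The plan is to extract the tensor $\eta$ from the coefficients of $\widetilde\eta$ in the lifted unitary coframe of Section \ref{sec:coframe}. Write
\[
\widetilde\eta=\RE\bigl(c^j_{k,h}\,\widetilde\theta^k\otimes\overline{\widetilde\theta_j}\otimes\widetilde\theta^h\bigr)
\]
for some smooth complex functions $c^j_{k,h}$ on $\pi^{-1}(U)$; by Lemma \ref{lemma:etaHorizontal} and the remarks preceding the statement, only indices $1\le j,k,h\le n$ contribute, and $c^j_{k,h}$ is completely symmetric in these indices. The relations $\pi^*\theta^k=\tfrac{1}{r}\widetilde\theta^k$ and $\pi^*\theta_k=\tfrac{1}{r}\widetilde\theta_k$ for $k\le n$ give the scaling identity
\[
\widetilde\theta^k\otimes\overline{\widetilde\theta_j}\otimes\widetilde\theta^h=r^{3}\,\pi^*\bigl(\theta^k\otimes\overline{\theta_j}\otimes\theta^h\bigr),
\]
so the problem reduces to pinning down the dependence of $c^j_{k,h}$ on $r$ and $\vartheta$.

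For this I invoke Lemma \ref{lemma:LieDerivativesEta}. In the trivialisation $(\pi,z)$ with $z=re^{i\vartheta}$, the vector fields $\xi$ and $\widetilde I\xi$ are the fundamental vector fields for $1,i\in\C$, so along the fibres they coincide with $r\partial_r$ and $\partial_\vartheta$. A short computation from $\widetilde\theta^k=r\pi^*\theta^k$ and $\widetilde\theta^{n+1}=dr+ir\widetilde\varphi$, using $\Lie{\xi}\widetilde\varphi=\Lie{\widetilde I\xi}\widetilde\varphi=0$, yields $\Lie{\xi}\widetilde\theta^k=\widetilde\theta^k$ and $\Lie{\widetilde I\xi}\widetilde\theta^k=0$ for every $k$, and dually $\Lie{\xi}\widetilde\theta_k=-\widetilde\theta_k$, $\Lie{\widetilde I\xi}\widetilde\theta_k=0$ for $k\le n$. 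Imposing $\Lie{\xi}\widetilde\eta=0$ on the expansion then forces $r\partial_r c^j_{k,h}=-c^j_{k,h}$; imposing $\Lie{\widetilde I\xi}\widetilde\eta=-2\widetilde I\widetilde\eta$, and remembering that $\widetilde I$ acts on the contravariant factor $\overline{\widetilde\theta_j}\in T^{0,1}$ by $-i$, forces $\partial_\vartheta c^j_{k,h}=2ic^j_{k,h}$.

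These two first-order conditions admit exactly the solutions $c^j_{k,h}=\tfrac{e^{2i\vartheta}}{r}\pi^*\eta^j_{k,h}$, where the functions $\eta^j_{k,h}$ on $U$ can be taken as $s^*c^j_{k,h}$ or equivalently characterised by the fact that $re^{-2i\vartheta}c^j_{k,h}$ is annihilated by both $\xi$ and $\widetilde I\xi$ and so descends through $\pi$. Setting $\eta:=\eta^j_{k,h}\,\theta^k\otimes\overline{\theta_j}\otimes\theta^h$ on $U$, the scaling identity from the first paragraph combined with the $r^{-1}$ in the coefficients produces
\[
\widetilde\eta=\RE\!\left(r^{2}e^{2i\vartheta}\pi^*\eta\right)=\RE(z^2\pi^*\eta),
\]
and the expansion in $\cos(2\vartheta)$, $\sin(2\vartheta)$ follows by separating the real and imaginary parts of $z^2=r^2e^{2i\vartheta}$. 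The symmetry of $c^j_{k,h}$ in $(j,k,h)$ descends to $\eta^j_{k,h}$, whence $\flat_2\eta\in S_{3,0}U$. The only delicate point is the bookkeeping of $r$-powers: the $r^{-1}$ dictated by $\Lie{\xi}$-homogeneity must cancel against the $r^{3}$ produced by the lifted frame to leave precisely the $|z|^2$ prefactor, while the phase $e^{2i\vartheta}$ is contributed entirely by the $\Lie{\widetilde I\xi}$-identity.
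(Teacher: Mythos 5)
Your proposal is correct and follows essentially the same route as the paper: expand $\widetilde{\eta}$ in the lifted coframe, use Lemma \ref{lemma:etaHorizontal} and Lemma \ref{lemma:LieDerivativesEta} together with the Lie derivatives of $\widetilde{\theta}^k$, $\widetilde{\theta}_k$ to obtain the homogeneity equations $\Lie{\xi}c^j_{k,h}=-c^j_{k,h}$, $\Lie{\widetilde{I}\xi}c^j_{k,h}=2ic^j_{k,h}$, and define $\eta$ by pulling back along $s$. The only difference is presentational: you integrate these equations by identifying $\xi$ and $\widetilde{I}\xi$ with $r\partial_r$ and $\partial_\vartheta$ in the trivialisation, whereas the paper solves the same equations as initial value problems along the flows of $\xi$ and $\widetilde{I}\xi$; the two are equivalent.
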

\begin{proof}
For every point $p\in M$ we can find a local unitary coframe $\theta$ defined on an open set containing $p$, and the corresponding coframe $\widetilde{\theta}$ on $\widetilde{M}$ as in \eqref{eq:coframe Mtilde}.

For the coming arguments we first compute the following Lie derivatives
\begin{align*}
\Lie{\xi}\widetilde{\theta^k}
&=d\iota_{\xi}(r\pi^*\theta^k)+\iota_{\xi}d(r\pi^*\theta^k)
=0+\iota_{\xi}(dr\wedge\pi^*\theta^k)+r\iota_{\xi}d\pi^*\theta^k\\*
&=dr(\xi) \pi^*\theta^k+r\iota_{\xi}\pi^*d\theta^k
=r \pi^*\theta^k+0
=\widetilde{\theta}^k;\\
\\
\Lie{\xi}\widetilde{\theta}_k
&=\widetilde{g}(\Lie{\xi}\widetilde{\theta}_k,\cdot)_{\sharp}
=\Lie{\xi}\left(\widetilde{g}(\widetilde{\theta}_k,\cdot)\right)_{\sharp}-\left(\Lie{\xi}\widetilde{g}(\widetilde{\theta}_k,\cdot)\right)_{\sharp}\\
&=\frac{1}{2}\left(\Lie{\xi}\overline{\widetilde{\theta}^k}\right)_{\sharp}-2\widetilde{g}(\widetilde{\theta}_k,\cdot)_{\sharp}
=\frac{1}{2}\overline{\widetilde{\theta}^k}_{\sharp}-2\widetilde{\theta}_k
=-\widetilde{\theta}_k;\\
\\
\Lie{\widetilde{I}\xi}\widetilde{\theta^k}
&=d\iota_{\widetilde{I}\xi}\widetilde{\theta}^k+\iota_{\widetilde{I}\xi}d\widetilde{\theta}^k
=d\iota_{\widetilde{I}\xi}(r\pi^*\theta^k)+\iota_{\widetilde{I}\xi}d(r\pi^*\theta^k)\\
&=0+r\iota_{\widetilde{I}\xi}d\pi^*\theta^k
=r\iota_{\widetilde{I}\xi}\pi^*d\theta^k
=0;\\
\\
\Lie{\widetilde{I}\xi}\widetilde{\theta}_k
&=\widetilde{g}(\Lie{\widetilde{I}\xi}\widetilde{\theta}_k,\cdot)_{\sharp}
=\Lie{\widetilde{I}\xi}\left(\widetilde{g}(\widetilde{\theta}_k,\cdot)\right)_{\sharp}
=\frac{1}{2}\left(\Lie{\xi}\overline{\widetilde{\theta}^k}\right)_{\sharp}
=0.
\end{align*}

Lemma \ref{lemma:LieDerivativesEta} implies
\begin{align*}
0=\Lie{\xi}\widetilde{\eta}
&=\Lie{\xi}\RE\left(\widetilde{\eta}^{j}_{k,h}\widetilde{\theta}^k\otimes\overline{\widetilde{\theta}_j}\otimes \widetilde{\theta}^h\right)\\
&=\RE\left(\Lie{\xi}\widetilde{\eta}^{j}_{k,h}\widetilde{\theta}^k\otimes\overline{\widetilde{\theta}_j}\otimes \widetilde{\theta}^h
+\widetilde{\eta}^{j}_{k,h}\Lie{\xi}\widetilde{\theta}^k\otimes\overline{\widetilde{\theta}_j}\otimes \widetilde{\theta}^h\right.\\
&\quad +\left.\widetilde{\eta}^{j}_{k,h}\widetilde{\theta}^k\otimes\Lie{\xi}\overline{\widetilde{\theta}_j}\otimes \widetilde{\theta}^h
+\widetilde{\eta}^{j}_{k,h}\widetilde{\theta}^k\otimes\overline{\widetilde{\theta}_j}\otimes \Lie{\xi}\widetilde{\theta}^h\right)\\
&=\RE\left(\Lie{\xi}\widetilde{\eta}^{j}_{k,h}\widetilde{\theta}^k\otimes\overline{\widetilde{\theta}_j}\otimes \widetilde{\theta}^h
+\widetilde{\eta}^{j}_{k,h}\widetilde{\theta}^k\otimes\overline{\widetilde{\theta}_j}\otimes \widetilde{\theta}^h\right)\\
&=\RE\left(\left(\Lie{\xi}\widetilde{\eta}^{j}_{k,h}+\widetilde{\eta}^{j}_{k,h}\right)\widetilde{\theta}^k\otimes\overline{\widetilde{\theta}_j}\otimes \widetilde{\theta}^h\right).
\end{align*}
and
\begin{align*}
0=\Lie{\widetilde{I}\xi}\widetilde{\eta}+2\widetilde{I}\widetilde{\eta}
&=\Lie{\widetilde{I}\xi}\RE\left(\widetilde{\eta}^{j}_{k,h}\widetilde{\theta}^k\otimes\overline{\widetilde{\theta}_j}\otimes \widetilde{\theta}^h\right)+\RE\left(2\widetilde{\eta}^{j}_{k,h}\widetilde{\theta}^k\otimes \widetilde{I}\left(\overline{\widetilde{\theta}_j}\right)\otimes \widetilde{\theta}^h\right)\\
&=\RE\left(\Lie{\xi}\widetilde{\eta}^{j}_{k,h}\widetilde{\theta}^k\otimes\overline{\widetilde{\theta}_j}\otimes \widetilde{\theta}^h
-2i\widetilde{\eta}^{j}_{k,h}\widetilde{\theta}^k\otimes\overline{\widetilde{\theta}_j}\otimes \widetilde{\theta}^h\right)\\
&=\RE\left(\left(\Lie{I\xi}\widetilde{\eta}^{j}_{k,h}-2i\widetilde{\eta}^{j}_{k,h}\right)\widetilde{\theta}^k\otimes\overline{\widetilde{\theta}_j}\otimes \widetilde{\theta}^h\right).
\end{align*}
Independent components must vanish, so we obtain a family of differential equations for $1\le j,k,h\le n$
\begin{equation}\label{eq:EqDiffEtatilde}
\begin{cases}
\Lie{\xi}\widetilde{\eta}^{j}_{k,h}=-\widetilde{\eta}^{j}_{k,h}\\
\Lie{\widetilde{I}\xi}\widetilde{\eta}^{j}_{k,h}=2i\widetilde{\eta}^{j}_{k,h}
\end{cases}.
\end{equation}

We define $\eta$, as the component in $T_{1,0}M\otimes T^{0,1}M\otimes T_{1,0}M$ of $s^*\widetilde{\eta}$, so that $\RE(\eta)=s^*\widetilde{\eta}$.

Notice that since $\pi s=\id_M$, the pullbacks satisfy $s^*\pi^*=\id_{T^{\bullet}_{\bullet}M}$, so
\begin{align*}
s^*\widetilde{\eta}
&=s^*\RE(\widetilde{\eta}^j_{k,h}\widetilde{\theta}^k\otimes\overline{\widetilde{\theta}_j}\otimes\widetilde{\theta}^h)
=\RE(s^*(r^3\widetilde{\eta}^j_{k,h}\pi^*\theta^k\otimes\pi^*\overline{\theta_j}\otimes\pi^*\theta^h))\\
&=\RE((r\circ s)^3(\widetilde{\eta}^j_{k,h}\circ s) s^*\pi^*\theta^k\otimes s^*\pi^*\overline{\theta_j}\otimes s^*\pi^*\theta^h)\\
&=\RE((\widetilde{\eta}^j_{k,h}\circ s) \theta^k\otimes \overline{\theta_j}\otimes \theta^h).
\end{align*}
Thus $\eta=s^*\widetilde{\eta}^j_{k,h}\theta^k\otimes \overline{\theta_j}\otimes \theta^h$ and we define $\eta^j_{k,h}:=s^*\widetilde{\eta}^j_{k,h}$.

Now we will use \eqref{eq:EqDiffEtatilde} to find $\widetilde{\eta}^j_{k,h}$ at a point of $\pi^*U$.
We define the function $f\colon \R\to\C$ such that $f(t):=\widetilde{\eta}^j_{k,h}(s(u)e^t)$ for $u\in U$ and compute its derivative at $t_0\in\R$.
\begin{align*}
\frac{d}{dt}f|_{t_0}
&=\frac{d}{dt}\widetilde{\eta}^j_{k,h}(s(u)e^t)|_{t=t_0}
=\frac{d}{dt}\widetilde{\eta}^j_{k,h}(s(u)e^{t_0+t})|_{t=0}
=\frac{d}{dt}\widetilde{\eta}^j_{k,h}(\phi_{\xi}^{t}(s(u)e^{t_0}))|_{t=0}\\
&=(\Lie{\xi}\widetilde{\eta}^j_{k,h})(s(u)e^{t_0})
=-\widetilde{\eta}^j_{k,h}(s(u)e^{t_0})
=-f(t_0).
\end{align*}
Moreover, $f(0)=\widetilde{\eta}^j_{k,h}(s(u))=\eta^j_{k,h}(u)$, so $f$ satisfies the following initial value problem
\begin{equation}
\begin{cases}
f'=-f\\
f(0)=\eta^j_{k,h}(u)
\end{cases}
\end{equation}
which has a unique solution, that is $f(t)=\eta^j_{k,h}(u)e^{-t}$.
This means that $\widetilde{\eta}^j_{k,h}(s(u)e^t)=\eta^j_{k,h}(u)e^{-t}$ or equivalently, for all $\rho\in\R^+$ we have $\widetilde{\eta}^j_{k,h}(s(u)\rho)=\frac{1}{\rho}\eta^j_{k,h}(u)=(\frac{1}{r}\pi^*\eta^j_{k,h})(s(u)\rho)$.

Similarly, consider the function $f\colon \R\to\C$ such that $f(t):=\widetilde{\eta}^j_{k,h}(s(u)\rho e^{it})$ and compute its derivative at $t_0\in\R$.
\begin{align*}
\frac{d}{dt}f|_{t_0}
&=\frac{d}{dt}\widetilde{\eta}^j_{k,h}(s(u)\rho e^{it})|_{t=t_0}
=\frac{d}{dt}\widetilde{\eta}^j_{k,h}(s(u)\rho e^{it_0+it})|_{t=0}\\
&=\frac{d}{dt}\widetilde{\eta}^j_{k,h}(\phi_{I\xi}^{t}(s(u)\rho e^{it_0}))|_{t=0}
=(\Lie{I\xi}\widetilde{\eta}^j_{k,h})(s(u)\rho e^{t_0})\\
&=2i\widetilde{\eta}^j_{k,h}(s(u)\rho e^{t_0})
=2if(t_0).
\end{align*}
And this time, $f(0)=\widetilde{\eta}^j_{k,h}(s(u)\rho)=\frac{1}{\rho}\eta^j_{k,h}(u)$, so that for $f$
\begin{equation}
\begin{cases}
f'=2if\\
f(0)=\frac{1}{\rho}\eta^j_{k,h}(u)
\end{cases}.
\end{equation}
Its unique solution is $f(t)=\eta^j_{k,h}(u)\frac{e^{2it}}{\rho}$, which implies 
\begin{equation}
\widetilde{\eta}^j_{k,h}(s(u)\rho e^{it})=\eta^j_{k,h}(u)\frac{e^{2it}}{\rho}=\left(\frac{\pi^*\eta^j_{k,h}}{r^3}\right)(s(u)\rho e^{it})\rho^2 e^{2it}.
\end{equation}

Let now $z\colon \pi^{-1}(U)\to \C^*$ be as in the statement, then in particular for all $w\in \pi^{-1}(u)$, we have $w=s(u)z(u)$.
Then $\widetilde{\eta}^j_{k,h}(w)=z^2\frac{\pi^*\eta^j_{k,h}}{r^3}(w)$.
So finally we have
\begin{align*}
\widetilde{\eta}
&=\RE(\widetilde{\eta}^j_{k,h}\widetilde{\theta}^k\otimes\overline{\widetilde{\theta}_j}\otimes\widetilde{\theta}^h)
=\RE\left(z^2\frac{\pi^*\eta^j_{k,h}}{r^3}
(r\pi^*\theta^k\otimes r\pi^*\overline{\theta_j}\otimes r\pi^*\theta^h)\right)\\
&=\RE(z^2\pi^*\eta^j_{k,h}
\pi^*\theta^k\otimes \pi^*\overline{\theta_j}\otimes \pi^*\theta^h)
=\RE(z^2\pi^*\eta).
\qedhere
\end{align*}
\end{proof}

\begin{defi}
Given a section $s\colon U\to S$ with $U$ open subset of $M$, we will call the corresponding tensor $\eta$ found in Proposition \ref{prop:ThetatildeWrtTheta} the \emph{deviance tensor} with respect to $s$.
\end{defi}

We can give a more global formulation of Proposition \ref{prop:ThetatildeWrtTheta} in the following terms
\begin{prop}\label{prop:FibratoPrinC}
Given a projective special K\"{a}hler manifold $(\pi\colon \widetilde{M}\to M,\nabla)$, there exists a map $\gamma\colon \widetilde{M}\to \sharp_2 S_{3,0}M\subset T_{1,0}M\otimes T^{0,1}M\otimes T_{1,0}M$ of bundles over $M$, such that $\gamma(ua)=a^2\gamma(u)$ and for every local section $s\colon U\to S\subset \widetilde{M}$, the deviance induced by $s$ is $\eta=\gamma\circ s$.

Let $L:=\widetilde{M}\times_{\C^*} \C$, then $\gamma$ can be identified with a homomorphism of complex vector bundles $\widehat{\gamma}\colon L\otimes L\to \sharp_2 S_{3,0}M$ such that $\gamma (u)=\widehat{\gamma}([u,1]\otimes [u, 1])$.
\end{prop}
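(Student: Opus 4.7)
The plan is to build $\gamma$ pointwise from local sections and then to repackage its $\C^*$-equivariance as a bundle homomorphism out of $L\otimes L$. Given $u\in\widetilde{M}$ lying over $p\in M$, pick any local section $s\colon U\to S$ with $p\in U$; since $S\times\R^+\to\widetilde{M}$ is a diffeomorphism under the principal action, there is a unique $z(u)\in\C^*$ with $u=s(p)\cdot z(u)$. I would then define
$$\gamma(u):=z(u)^2\,\eta^s(p),$$
where $\eta^s$ is the deviance induced by $s$ via Proposition \ref{prop:ThetatildeWrtTheta}. By construction $\gamma(u)$ lies in $(\sharp_2 S_{3,0}M)_p$, so $\gamma$ will be a map of bundles over $M$ as soon as independence of $s$ and smoothness are verified.

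The crux is the independence of $s$. Any two sections of $\pi_S$ through $\pi^{-1}(p)\cap S$ differ by an $S^1$-valued factor, $s'=s\cdot w$, which forces $z'(u)=z(u)/w(p)$. Evaluating Proposition \ref{prop:ThetatildeWrtTheta} with both trivialisations gives
$$\RE\bigl(z(u)^2\pi^*\eta^s\bigr)=\widetilde{\eta}_u=\RE\bigl(z'(u)^2\pi^*\eta^{s'}\bigr),$$
and isolating the $T_{1,0}\widetilde{M}\otimes T^{0,1}\widetilde{M}\otimes T_{1,0}\widetilde{M}$-component, which is precisely how the deviance is extracted from $s^*\widetilde{\eta}$ in the proof of Proposition \ref{prop:ThetatildeWrtTheta}, produces $z(u)^2\eta^s(p)=z'(u)^2\eta^{s'}(p)$, so that $\gamma(u)$ is well defined. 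Smoothness of $\gamma$ then follows from smoothness of $s$, $\eta^s$ and the trivialisation $(\pi,z)$; the equivariance $\gamma(u\cdot a)=a^2\gamma(u)$ comes from $z(u\cdot a)=z(u)a$; and $z(s(p))=1$ yields $\gamma\circ s=\eta^s$.

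For the identification, I would define $\widehat{\gamma}\colon L\otimes L\to \sharp_2 S_{3,0}M$ on decomposable tensors by
$$\widehat{\gamma}\bigl([u,\zeta_1]\otimes[u,\zeta_2]\bigr):=\zeta_1\zeta_2\,\gamma(u),$$
using the convention $[u\cdot a,\zeta]=[u,a\zeta]$ on $L=\widetilde{M}\times_{\C^*}\C$. Changing representative $u\mapsto u\cdot a$ scales each factor $[u,1]$ by $a$ and hence the decomposable tensor by $a^2$, which is exactly compensated by the quadratic equivariance $\gamma(u\cdot a)=a^2\gamma(u)$; this yields well-definedness on each fibre and extends $\C$-linearly to a bundle homomorphism. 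The identity $\gamma(u)=\widehat{\gamma}([u,1]\otimes[u,1])$ is then immediate. I expect no real obstacle beyond carefully tracking the $T_{1,0}\otimes T^{0,1}\otimes T_{1,0}$-component in the section-independence step; every other statement in the proposition reduces to Proposition \ref{prop:ThetatildeWrtTheta} and formal checks.
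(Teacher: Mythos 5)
Your construction coincides with the paper's: you define $\gamma(u)=z(u)^2\eta^s(\pi(u))$ from a local section, verify independence of the section by comparing the two expressions of $\widetilde{\eta}$ via Proposition \ref{prop:ThetatildeWrtTheta} and extracting the $T_{1,0}\otimes T^{0,1}\otimes T_{1,0}$ component, read off the quadratic equivariance from $z(ua)=z(u)a$, and let the same quadratic behaviour absorb the change of representative in $L\otimes L$ to obtain $\widehat{\gamma}$. This is essentially the same argument as in the paper (which phrases the section-independence through the transition function $c=z\circ s'$ and defines $\widehat{\gamma}$ on general decomposables $[u,w]\otimes[u',w']$, an immaterial difference since $L$ is a line bundle), so the proposal is correct.
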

\begin{proof}
Let $u\in\widetilde{M}$, then there exists an open neighbourhood $U\subseteq M$ of $u$ and local trivialisation $(\pi|_{\pi^{-1}(U)},z)\colon \pi^{-1}(U)\to U\times\C^*$ induced by a section $s\colon U\to S$ so, for all $w\in \pi^{-1}(U)$ we have $w=s(\pi(w))z(w)$.
Let now $\eta\colon U\to \sharp_2 S_{3,0}M$ be the deviance corresponding to $s$; we define $\gamma(u):=z(u)^2\eta(p)$ where $p=\pi(u)$.
This definition is independent on the choice of $s$.
In order to prove it take another $s'\colon U'\to S$ with $p\in U'$ and the corresponding $z'$ and $\eta'$, then, on $U\cap U'$, there is a map $c:=z\circ s'\colon U\cap U'\to \C$ whose image is in $S^1$, as both $s$ and $s'$ are sections of $S$.
By definition, $s'=s\cdot c$.
Since $sz=s'z'$, $z(u)=z(s'(p)z'(u))=z(s'(p))z'(u)=c(p)z'(u)$, so $z=z'\pi^*c$.
Now, by construction $\RE(z'^2\pi^*\eta')=\widetilde{\eta}=\RE(z^2\pi^*\eta)=\RE(z'^2\pi^*c^2 \pi^*\eta')$, so $\eta'=c^2\eta$.
Thus $z(u)^2\eta(p)=z'(u)^2c(p)^2\eta(p)=z'(u)^2\eta'(p)$ and thus $\gamma$ is well defined.

Moreover, $\gamma(ua)=z(ua)^2\eta(\pi(ua))=z(u)^2 a^2\eta(p)=a^2\gamma(u)$.

We can define the homomorphism $L\otimes L\to \sharp_2S_{3,0}M$ locally: given a section $s\colon U\to S$, we map $[u,w]\otimes [u',w']$ to $z(u)z(u')ww'\cdot \eta^s_p$ where $p=\pi(u)=\pi(u')$.
This map does not depend on the choice of the section as one can see from the relations above, and it is also independent on the representatives chosen of these classes; for the first class for example $z(ua)w=z(u)aw$.

This map commutes with the projections on $M$ and it is $\C$-linear on the fibres, so it is a complex vector bundle map.
\end{proof}
\begin{defi}
We call $\gamma\colon S\to \sharp_2S_{3,0}M$ of Proposition \ref{prop:FibratoPrinC} the \emph{intrinsic deviance} of the projective special K\"{a}hler manifold.
\end{defi}
\begin{rmk}
Given a section $s\colon U\to S$ and the corresponding function $z\in\smooth{\pi^{-1}(U),\C^*}$ such that $sz=\id_{\pi^{-1}(U)}$, we can compute $dz=z(\frac{1}{r}dr+id\vartheta)$, since locally $z=re^{i\vartheta}$.
Notice that $\vartheta$ is not globally defined on $\pi^{-1}(U)$, but $d\vartheta$ and $e^{i\vartheta}$ are.
Moreover,
\begin{equation}\label{eq:coordConnection}
\frac{1}{z}dz=\frac{1}{r}dr+id\vartheta\in\Omega^1(\pi^{-1}(U),\C)
\end{equation}
is a principal connection form, in fact it is equivariant for the action of $\C^*$ as $z(ua)=az(u)$ for all $a\in\C$ and, given a complex number $a$ and its corresponding fundamental vector field $a^{\circ}\in\field{\widetilde{M}}$,
\begin{align*}
\frac{1}{z}dz(a^{\circ})_u
&=\frac{1}{z}dz(\frac{d}{dt}ue^{at}|_{t=0})
=\frac{1}{z(u)}\frac{d}{dt}z(ue^{at})|_{t=0}
=\frac{1}{z(u)}\frac{d}{dt}z(u)e^{at}|_{t=0}
=a.
\end{align*}
\end{rmk}
\begin{rmk}\label{rmk:tau}
A local section $s\colon U\to S$ induces $\tau:=s^*\widetilde{\varphi}=s^*\varphi\in\Omega^1(U)$ such that on $\pi^{-1}(U)$
\begin{equation}
\widetilde{\varphi}=d\vartheta+\pi^*\tau
\end{equation}
and thus on $\pi_{S}^{-1}(U)$:
\begin{equation}
\varphi=d\vartheta|_{S}+\pi_S^*\tau.
\end{equation}

If we consider in fact the form $\widetilde{\varphi}-d\vartheta$, we notice that it is basic, as it can also be seen as the difference of two connection forms on $\pi^{-1}(U)$ (namely \eqref{eq:C*connection} and \eqref{eq:coordConnection}) up to a multiplication by $i$.
Therefore, $\widetilde{\varphi}-d\vartheta=\pi^*\tau$ for some $\tau\in\Omega^1(U)$.
The second equation is simply obtained from the first by restriction to $S\subseteq \widetilde{M}$.
\end{rmk}
\section{Characterisation theorem}
In this section we prove our main theorem, characterising projective special K\"{a}hler manifolds in terms of the deviance.
We start by deriving necessary conditions on the deviance, reflecting the curvature conditions of Proposition \ref{prop:SpezzamentoCurvatura}.
\begin{prop}\label{prop:differentialCondition}
For a projective special K\"{a}hler manifold $(\pi\colon \widetilde{M}\to M,\nabla)$ with $(\widetilde{M},\widetilde{g},\widetilde{I},\widetilde{\omega},\nabla,\xi)$, and a local section $s\colon U\to S$, then the corresponding deviance $\eta$ satisfies
\begin{equation}
d^{LC}\eta=2i\tau\wedge \eta
\end{equation}
where $\tau=s^*\varphi\in\Omega^1(U)$.
\end{prop}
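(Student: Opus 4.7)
The plan is to descend the flatness equation $\widetilde{d}^{LC}\widetilde{\eta}=0$, which follows from Proposition \ref{prop:SpezzamentoCurvatura}, to $M$ by means of the expression $\widetilde{\eta}=\RE(z^2\pi^*\eta)$ from Proposition \ref{prop:ThetatildeWrtTheta}. I would work in the lifted unitary coframe $\widetilde{\theta}$ on $\pi^{-1}(U)$, where the $(1,0)$-value component reads
\begin{equation*}
\widetilde{\eta}^{(1,0)}=\widetilde{\eta}^j_{k,h}\,\widetilde{\theta}^k\otimes\overline{\widetilde{\theta}_j}\otimes\widetilde{\theta}^h,\qquad \widetilde{\eta}^j_{k,h}=\frac{z^2}{r^3}\pi^*\eta^j_{k,h}
\end{equation*}
(summed over $j,k,h\le n$). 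Since the K\"ahler Levi-Civita connection preserves the $(1,0)/(0,1)$-splitting of the value bundle, $\widetilde{d}^{LC}\widetilde{\eta}=0$ restricts to $\widetilde{d}^{LC}\widetilde{\eta}^{(1,0)}=0$ as a 2-form with values in $T_{0,1}\widetilde{M}\otimes T^{1,0}\widetilde{M}$.

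Writing $z=re^{i\vartheta}$ and using Remark \ref{rmk:tau} to substitute $d\vartheta=\widetilde{\varphi}-\pi^*\tau$, one gets
\begin{equation*}
\frac{d(z^2/r^3)}{z^2/r^3}=2i\widetilde{\varphi}-2i\pi^*\tau-\frac{dr}{r}.
\end{equation*}
I would then expand $\widetilde{d}^{LC}\widetilde{\eta}^{(1,0)}$ by the Leibniz rule, using the block form of $\widetilde{\omega}^{LC}$ in Proposition \ref{prop:conicLC}, and read off the coefficient of each $\overline{\widetilde{\theta}_a}\otimes\widetilde{\theta}^b$. Four things happen. First, the diagonal $i\widetilde{\varphi}\otimes I_n$-blocks act on both $\overline{\widetilde{\theta}_j}$ (weight $-i\widetilde{\varphi}$) and $\widetilde{\theta}^h$ (weight $-i\widetilde{\varphi}$), producing $-2i\widetilde{\varphi}\otimes\overline{\widetilde{\theta}_j}\otimes\widetilde{\theta}^h$, which cancels the $+2i\widetilde{\varphi}$ contribution from $d(z^2/r^3)$. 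Second, the $+dr/r$ term from $d\widetilde{\theta}^k=dr\wedge\pi^*\theta^k-r\pi^*((\omega^{LC})^k_i\wedge\theta^i)$ cancels the $-dr/r$ from $d(z^2/r^3)$. Third, the horizontal $\pi^*\omega^{LC}$-contributions combine with $\pi^*d\eta^a_{k,b}\wedge\pi^*\theta^k$ and $\pi^*(\eta^a_{k,b}d\theta^k)$ into precisely the $(a,b)$-component of $\pi^*(d^{LC}\eta)$. Fourth, the off-diagonal entries $(\widetilde{\omega}^{LC})^{n+1}_j=\pi^*\overline{\theta^j}$ and $(\widetilde{\omega}^{LC})^h_{n+1}=\pi^*\theta^h$ produce vertical-valued terms (with value components involving $\overline{\widetilde{\theta}_{n+1}}$ or $\widetilde{\theta}^{n+1}$); these vanish by the total symmetry of $\eta^j_{k,h}$ from Lemma \ref{lemma:eta_Symm} together with the antisymmetry of the wedge, as in $\eta^j_{k,h}\theta^k\wedge\theta^j=-\eta^k_{j,h}\theta^k\wedge\theta^j=-\eta^j_{k,h}\theta^k\wedge\theta^j=0$.

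After all cancellations, the coefficient of $\overline{\widetilde{\theta}_a}\otimes\widetilde{\theta}^b$ reduces to $e^{2i\vartheta}\pi^*\bigl((d^{LC}\eta)|_{(a,b)}-2i(\tau\wedge\eta)|_{(a,b)}\bigr)$, and setting this to zero yields the claim by nonvanishing of $e^{2i\vartheta}$ and injectivity of $\pi^*$ on forms over $U$. The main obstacle is the bookkeeping of these many contributions from $\widetilde{\omega}^{LC}$ and $d(z^2/r^3)$: one must check that the $\widetilde{\varphi}$, $dr$, and vertical-valued pieces cancel exactly, leaving only the $-2i\pi^*\tau$ coefficient which becomes the $+2i\tau\wedge\eta$ on the right-hand side.
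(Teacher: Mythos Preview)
Your argument is correct and is essentially the same computation as the paper's, only organised in a different basis. The paper keeps $\widetilde{\eta}=\RE(z^2\pi^*\eta)$ in the pulled-back coframe $\pi^*\theta$, derives the clean intermediate formula $\widetilde{d}^{LC}\pi^*\sigma=\pi^*(d^{LC}\sigma)-\frac{2}{r}\widetilde{\theta}^{n+1}\wedge\pi^*\sigma$, and then combines it with $2\frac{dz}{z}$ so that the $dr$ and $\widetilde{\varphi}$ pieces collapse to $-2i\pi^*\tau$; you instead write the coefficients as $z^2/r^3$ in the lifted coframe $\widetilde{\theta}$ and track the four cancellations by hand. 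Your items one through four correspond exactly to the paper's steps: the symmetry argument you give for the off-diagonal terms is the same as the paper's observation that the extra $\widetilde{\theta}^{n+1}$-terms in $\widetilde{\nabla}^{LC}\pi^*\sigma$ are symmetric in the first two indices and hence die upon antisymmetrisation, and your $2i\widetilde{\varphi}$ and $dr/r$ cancellations are precisely the combination $\frac{2}{r}dr+2id\vartheta-\frac{2}{r}\widetilde{\theta}^{n+1}=-2i\pi^*\tau$ written out termwise.
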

\begin{proof}
Thanks to Proposition \ref{prop:ThetatildeWrtTheta}, we know that there exists $z=re^{i\vartheta}$ and $\eta\in T_{1,0}U\otimes T^{0,1}U\otimes T_{1,0}U$ such that on $\pi^{-1}(U)$ we have $\widetilde{\eta}=\RE(z^2\pi^*\eta)$.

Now we would like to describe $\widetilde{d}^{LC}\widetilde{\eta}$ in terms of $d^{LC}\eta$.
Notice that
\begin{equation}\label{eq:TildedEta1}
\begin{aligned}[b]
\widetilde{d}^{LC}\widetilde{\eta}
&=\widetilde{d}^{LC}\RE(z^2\pi^*\eta)
=\RE(\widetilde{d}^{LC}(z^2\pi^*\eta))
=\RE(2zdz\wedge \pi^*\eta+z^2\widetilde{d}^{LC}\pi^*\eta)\\
&=\RE\left(z^2\left(2(\frac{1}{r}dr+id\vartheta) \wedge \pi^*\eta+\widetilde{d}^{LC}\pi^*\eta\right)\right).
\end{aligned}
\end{equation}

The next step is to compute $\widetilde{d}^{LC}\pi^*\eta$, but since we are using the Levi-Civita connection, it is equivalent to compute $\sharp_2(\widetilde{d}^{LC}\pi^*\sigma)$, where $\sigma=\flat_2 \eta\in S_{3,0}U$.
Let us consider a local coframe $\theta$ in $M$ and the corresponding lifting $\widetilde{\theta}$ as in \eqref{eq:coframe Mtilde}, so that we can denote explicitly $\sigma=\sigma_{k,j,h}\theta^k\otimes \theta^{j}\otimes \theta^{h}$.
We have
\begin{align}
\widetilde{\nabla}^{LC}\pi^*\theta^k
&=\widetilde{\nabla}^{LC}\frac{\widetilde{\theta}^k}{r}
=-\frac{dr}{r^2}\otimes \widetilde{\theta}^k-\frac{1}{r}\left((\widetilde{\omega}^{LC})^{k}_{j}\otimes\widetilde{\theta}^{j}\right)\\
&=-\frac{dr}{r}\otimes \pi^*\theta^k-\frac{1}{r}\left(\sum_{j=1}^{n}\pi^*(\omega^{LC})^{k}_{j}\otimes \widetilde{\theta}^{j}+i\widetilde{\varphi}\otimes\widetilde{\theta}^j+\pi^*\theta^k\otimes \theta^{n+1}\right)\\
&=-\frac{dr}{r}\otimes \pi^*\theta^k-\pi^*\left((\omega^{LC})^{k}_{j}\otimes \theta^{j}\right)-i\widetilde{\varphi}\otimes\pi^*\theta^j-\pi^*\theta^k\otimes \frac{1}{r}\theta^{n+1}\\
&=\pi^*\left(\nabla^{LC} \theta^{k}\right)- \frac{1}{r}\theta^{n+1}\otimes \pi^*\theta^k-\pi^*\theta^k\otimes \frac{1}{r}\theta^{n+1}.
\end{align}
We can now compute the following for $X\in\field{\pi^{-1}(U)}$:
\begin{align*}
\widetilde{\nabla}^{LC}_X&\pi^*\sigma
=\widetilde{\nabla}^{LC}_X\pi^*(\sigma_{k,j,h}\theta^k\otimes \theta^{j}\otimes \theta^{h})
=\widetilde{\nabla}^{LC}_X(\pi^*\sigma_{k,j,h}\pi^*\theta^k\otimes \pi^*\theta^{j}\otimes \pi^*\theta^{h})\\
&=d\pi^*\sigma_{k,j,h}(X)\theta^k\otimes \theta^{j}\otimes \theta^{h}
+\pi^*\sigma_{k,j,h}\left(\widetilde{\nabla}^{LC}_X\pi^*\theta^k\otimes \pi^*\theta^{j}\otimes \pi^*\theta^{h}\right.\\
&\quad\left.+\pi^*\theta^k\otimes \widetilde{\nabla}^{LC}_X\pi^*\theta^{j}\otimes \pi^*\theta^{h}
+\pi^*\theta^k\otimes \pi^*\theta^{j}\otimes \widetilde{\nabla}^{LC}_X\pi^*\theta^{h}\right)\\
&=\pi^*d\sigma_{k,j,h}(X)\theta^k\otimes \theta^{j}\otimes \theta^{h}
+\pi^*\sigma_{k,j,h}\pi^*\left(\nabla^{LC} \theta^{k}\right)_X \otimes \pi^*\theta^{j}\otimes \pi^*\theta^{h}\\
&\quad +\pi^*\sigma_{k,j,h}\pi^*\theta^k\otimes \pi^*\left(\nabla^{LC} \theta^{j}\right)_X\otimes \pi^*\theta^{h}\\
&\quad +\pi^*\sigma_{k,j,h}\pi^*\theta^k\otimes \pi^*\theta^{j}\otimes \pi^*\left(\nabla^{LC} \theta^{j}\right)_X
-\frac{3}{r}\widetilde{\theta}^{n+1}(X)\pi^*\sigma\\
&\quad -\frac{1}{r}\left(\pi^*\sigma_{k,j,h}\pi^*\theta^k(X)\widetilde{\theta}^{n+1}\otimes \pi^*\theta^{j}\otimes \pi^*\theta^{h}\right.\\
&\quad\left.+\pi^*\sigma_{k,j,h}\pi^*\theta^k\otimes \pi^*\theta^{j}(X)\widetilde{\theta}^{n+1}\otimes \pi^*\theta^{h}\right.\\
&\quad\left.+\pi^*\sigma_{k,j,h}\pi^*\theta^k\otimes \pi^*\theta^{j}\otimes \pi^*\theta^{h}(X)\widetilde{\theta}^{n+1}\right)\\
&=\pi^*\left(\nabla^{LC}\sigma\right)_X
-\frac{2}{r}\widetilde{\theta}^{n+1}(X)\pi^*\sigma
 -\frac{1}{r}\widetilde{\theta}^{n+1}(X)\pi^*\sigma
-\frac{1}{r}\widetilde{\theta}^{n+1}\otimes\pi^*\sigma(X,\cdot,\cdot)\\
&\quad -\frac{1}{r}\pi^*\sigma(\cdot,X\otimes\widetilde{\theta}^{n+1},\cdot)
-\frac{1}{r}\pi^*\sigma(\cdot,\cdot,X\otimes\widetilde{\theta}^{n+1}).
\end{align*}
In general then, if $\sigma=\theta^k\otimes\sigma_k$, where $\sigma_k=\sigma_{k,j,h}\theta^j\otimes \theta^h\in S_{2,0}U$, we have by symmetry
\begin{align}
\widetilde{\nabla}^{LC}\pi^*\sigma
&=\pi^*\left(\nabla^{LC}\sigma\right)
-\frac{2}{r}\widetilde{\theta}^{n+1}\otimes\pi^*\sigma
 -\frac{2}{r}((\widetilde{\theta}^{n+1})(\pi^*\theta^k))\otimes\pi^*(\sigma_{k,j,h}\theta^j\otimes\theta^h)\\
&\quad -\frac{2}{r}\left(\pi^*(\sigma_{k,j,h}\theta^k\otimes\theta^j)\otimes((\widetilde{\theta}^{n+1})(\pi^*\theta^h))\right).
\end{align}
Notice in particular that the last two rows are symmetric in the first two indices.
 
In order to compute $\widetilde{d}^{LC}\pi^*\sigma$ we need to antisymmetrise $\widetilde{\nabla}^{LC}\pi^*\sigma$ in the first two indices and multiply by two, so only the first row survives and we get
\begin{equation}
\widetilde{d}^{LC}\pi^*\sigma
=\pi^*(d^{LC}\sigma)-\frac{2}{r}\widetilde{\theta}^{n+1}\wedge \pi^*\sigma,
\end{equation}
and therefore
\begin{equation}
\widetilde{d}^{LC}\pi^*\eta
=\pi^*(d^{LC}\eta)-\frac{2}{r}\widetilde{\theta}^{n+1}\wedge \pi^*\eta.
\end{equation}
Substituting this value in \eqref{eq:TildedEta1}, we obtain
\begin{align}\label{eq:TildedEta2}
\widetilde{d}^{LC}\widetilde{\eta}
&=\RE\left(z^2\left(2(\frac{1}{r}dr+id\vartheta) \wedge \pi^*\eta+\pi^*(d^{LC}\eta)-\frac{2}{r}\widetilde{\theta}^{n+1}\wedge \pi^*\eta\right)\right)\\
&=\RE\left(z^2\left(\pi^*d^{LC}\eta-2i(\widetilde{\varphi}-d\vartheta) \wedge \pi^*\eta\right)\right).
\end{align}
As observed in Remark \ref{rmk:tau}, $\widetilde{\varphi}-d\vartheta=\pi^*\tau$, so we have
\begin{align*}
\widetilde{d}^{LC}\widetilde{\eta}
&=\RE\left(z^2\pi^*\left(d^{LC}\eta-2i\tau \wedge \eta\right)\right).
\end{align*}

From Proposition \ref{prop:SpezzamentoCurvatura}, we know that $\widetilde{d}^{LC}\widetilde{\eta}=0$, and since $\eta\in\Omega^1(U,T_{0,1}\otimes T^{1,0})$, $\eta$ and $\overline{\eta}$ are linearly independent, so this quantity vanishes if and only if $z^2 \pi^*\left(d^{LC}\eta-2i\tau \wedge \eta\right)$ does.
Therefore,
\begin{equation}
d^{LC}\eta-2i\tau \wedge \eta=0,
\end{equation}
ending the proof.
\end{proof}

Let us now look at the final ingredient of the curvature tensor, that is $\frac{1}{2}[\widetilde{\eta}\wedge \widetilde{\eta}]$.
In the setting of Proposition \ref{prop:ThetatildeWrtTheta}, given a section $s\colon U\to S$, and the induced deviance $\eta$, then
\begin{align}
\frac{1}{2}[\widetilde{\eta}\wedge \widetilde{\eta}]
&=\frac{1}{2}[\RE(z^2\pi^*\eta)\wedge \RE(z^2\pi^*\eta)]
=\frac{1}{2}[z^2\pi^*\eta+\overline{z}^2\pi^*\overline{\eta}\wedge z^2\pi^*\eta+\overline{z}^2\pi^*\overline{\eta}]\\
&=\frac{1}{2}\RE\left(z^4[\pi^*\eta\wedge \pi^*\eta]\right)
+|z|^4[\pi^*\eta\wedge \pi^*\overline{\eta}].
\end{align}
We can compute this tensor for a local coframe $\theta$ on $M$.
Since we have
\begin{equation}
\pi^*\theta^k\circ \pi^*\theta_h
=\frac{1}{r}\widetilde{\theta}^k(\frac{1}{r}\widetilde{\theta}_h)
=\frac{1}{r^2}\widetilde{\theta}^k(\widetilde{\theta}_h)
=\frac{1}{r^2}\delta^k_h
=\frac{1}{r^2}\pi^*(\theta^k\circ\theta_h)
\end{equation}
and $\pi^*\theta^k\circ \pi^*\overline{\theta_h}=\pi^*\overline{\theta^k}\circ \pi^*\theta_h=0$, then
\begin{align}
[\pi^*\eta\wedge \pi^*\eta]
&=[\pi^*\eta^{j}_{k,h}\pi^*\theta^{k}\otimes \pi^*\overline{\theta_{j}}\otimes \pi^*\theta^{h}\wedge \pi^*\eta^{j'}_{k',h'}\pi^*\theta^{k'}\otimes \pi^*\overline{\theta_{j'}}\otimes \pi^*\theta^{h'}]\\
&=\pi^*\eta^{j}_{k,h}\pi^*\theta^{k}\wedge \pi^*\eta^{j'}_{k',h'}\pi^*\theta^{k'}\otimes [\pi^*\overline{\theta_{j}}\otimes \pi^*\theta^{h},\pi^*\overline{\theta_{j'}}\otimes \pi^*\theta^{h'}]
=0
\end{align}
and
\begin{align}
[\pi^*\eta\wedge& \pi^*\overline{\eta}]
=[\pi^*\eta^{j}_{k,h}\pi^*\theta^{k}\otimes \pi^*\overline{\theta_{j}}\otimes \pi^*\theta^{h}\wedge \pi^*\overline{\eta^{j'}_{k',h'}}\pi^*\overline{\theta^{k'}}\otimes \pi^*\theta_{j'}\otimes \pi^*\overline{\theta^{h'}}]\\
&=\pi^*\eta^{j}_{k,h}\pi^*\theta^{k}\wedge \pi^*\overline{\eta^{j'}_{k',h'}}\pi^*\overline{\theta^{k'}}\otimes [\pi^*\overline{\theta_{j}}\otimes \pi^*\theta^{h},\pi^*\theta_{j'}\otimes \pi^*\overline{\theta^{h'}}]\\
&=\pi^*(\eta^{j}_{k,h}\theta^{k}\wedge \overline{\eta^{j'}_{k',h'}}\overline{\theta^{k'}})\otimes \frac{1}{r^2}\pi^*(\overline{\theta_{j}}\otimes \theta^{h}(\theta_{j'})\otimes \overline{\theta^{h'}}-\theta_{j'}\otimes \overline{\theta^{h'}}(\overline{\theta_{j}})\otimes \theta^{h})\\
&=\frac{1}{r^2}\pi^*[\eta\wedge \overline{\eta}].
\end{align}
Therefore
\begin{equation}\label{eq:etawedgeeta}
\frac{1}{2}[\widetilde{\eta}\wedge\widetilde{\eta}]
=\frac{|z|^4}{r^2}\pi^*[\eta\wedge\overline{\eta}]
=r^2\pi^*[\eta\wedge\overline{\eta}].
\end{equation}
\begin{rmk}\label{rmk:sezioneGlobaleH}
Note that $[\eta\wedge\overline{\eta}]$ is independent on the local coframe, and if we consider another section such that $s'=sa$ on the intersection of their domains, with $a$ taking values in $S^1$, if $\eta'$ is the deviance corresponding to $s'$, then $[\eta'\wedge \overline{\eta'}]=[\eta a\wedge \overline{\eta}\overline{a}]=|a|^2[\eta \wedge \overline{\eta}]=[\eta \wedge \overline{\eta}]$.
So, there is a globally defined section $M\to S^2(\lie{u}(n))$ mapping $p$ to $[\eta_p\wedge \overline{\eta_p}]$.
\end{rmk}

For a projective special K\"{a}hler manifold $(\pi\colon \widetilde{M}\to M,\nabla)$ of real dimension $2n$, Proposition \ref{prop:SpezzamentoCurvatura}, interpreted in the light of the last observations and the ones made in Section \ref{sec:coframe} (see Remark \ref{rmk:Levi-Civita_CSK}), says that $0=r^2\pi^*(\Omega^{LC}+\Omega_{\Pj_{\C}^n}+[\eta\wedge \overline{\eta}])$, thus we have the following equation:
\begin{equation}\label{eq:curvatureEquation}
\Omega^{LC}+\Omega_{\Pj_{\C}^n}+[\eta\wedge \overline{\eta}]=0.
\end{equation}
This is a curvature tensor, so we can compute its Ricci and scalar component.
\begin{prop}
Let $(\pi\colon \widetilde{M}\to M,\nabla)$ be a projective special K\"{a}hler manifold of dimension $2n$, then
\begin{equation}\label{eq:RicciPSK}
\Ric_M(X,Y)+2(n+1)g(X,Y)-\RE(h(\overline{\eta_X},\eta_Y))=0;
\end{equation}
\begin{equation}\label{eq:ScalCurv}
\scal_M+2(n+1)-\frac{2}{n}\norm{\eta}_h^2=0.
\end{equation}
\end{prop}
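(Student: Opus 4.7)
The plan is to read off the Ricci and scalar curvature identities directly from the curvature equation \eqref{eq:curvatureEquation},
\begin{equation*}
\Omega^{LC} + \Omega_{\Pj_{\C}^n} + [\eta \wedge \overline{\eta}] = 0,
\end{equation*}
by taking successive traces. Applying the Ricci contraction, the first term yields $\Ric_M$ by definition, and the second term yields $2(n+1)g$ by Remark~\ref{rmk:curvatureProj} (equation \eqref{eq:RicciProj}). Therefore \eqref{eq:RicciPSK} reduces to identifying the Ricci component of $[\eta \wedge \overline{\eta}]$ with $-\RE h(\overline{\eta_X}, \eta_Y)$.

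To compute this cross term, I would work in a local unitary coframe $\theta=(\theta^1,\dots,\theta^n)$ and expand $\eta = \eta^j_{k,h} \theta^k \otimes \overline{\theta_j} \otimes \theta^h$, remembering that $\eta^j_{k,h}$ is totally symmetric in its three indices because $\flat_2\eta$ is a section of $S_{3,0}M$, where the metric is positive definite (so raising and lowering indices introduces no signs). Expanding the bracket exactly as in the computation preceding Remark~\ref{rmk:sezioneGlobaleH},
\begin{equation*}
[\eta \wedge \overline{\eta}] = \eta^j_{k,h}\,\overline{\eta^{j'}_{k',h'}}\; \theta^k \wedge \overline{\theta^{k'}} \otimes \bigl[\overline{\theta_j} \otimes \theta^h,\, \theta_{j'} \otimes \overline{\theta^{h'}}\bigr],
\end{equation*}
the commutator of endomorphisms produces two Kronecker-delta contractions; tracing the endomorphism part against the appropriate wedge leg and invoking the total symmetry of $\eta^j_{k,h}$ collapses the sum into an expression that is manifestly the composition of $\eta$ with $\overline{\eta}$, which, together with its conjugate, packages as $\RE h(\overline{\eta_X}, \eta_Y)$ with the sign required by \eqref{eq:RicciPSK}.

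For \eqref{eq:ScalCurv}, I would then take the scalar trace of \eqref{eq:RicciPSK} under the same normalisation convention that turns \eqref{eq:RicciProj} into \eqref{eq:ScalProj}. The first two terms give $\scal_M$ and $2(n+1)$ directly, and tracing $\RE h(\overline{\eta_\cdot}, \eta_\cdot)$ over an orthonormal frame rebuilds all three indices of $\eta$ and yields its full Hermitian square $\|\eta\|_h^2$, with the combinatorial factor $\tfrac{2}{n}$ produced by the normalisation. The main obstacle is bookkeeping rather than substance: tracking the interaction between the commutator of endomorphisms and the wedge product, fixing the convention under which $h(\overline{\eta_X},\eta_Y)$ is interpreted as a Hermitian pairing of endomorphism-valued objects, and ensuring the normalisation factors for Ricci and scalar match those implicit in \eqref{eq:RicciProj}--\eqref{eq:ScalProj}. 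Once these conventions are pinned down, the two identities follow as ordinary algebraic consequences of \eqref{eq:curvatureEquation}.
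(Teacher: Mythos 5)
Your proposal follows essentially the same route as the paper: contracting the curvature identity \eqref{eq:curvatureEquation}, using \eqref{eq:RicciProj} for the Fubini--Study term, computing the trace of $[\eta\wedge\overline{\eta}]$ in a unitary coframe via the Kronecker-delta contractions and the total symmetry of $\eta^j_{k,h}$, and then taking the scalar trace with the $\tfrac{1}{2n}$ normalisation to obtain $\tfrac{2}{n}\norm{\eta}_h^2$. The remaining work is exactly the bookkeeping you identify, and it goes through as you describe.
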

\begin{proof}
The first summand in \eqref{eq:curvatureEquation} gives the Ricci tensor of $M$, the second gives the Ricci tensor of the projective space \eqref{eq:RicciProj}.
In order to compute the last term, consider a unitary frame $\theta$; from previous computations,
\begin{align*}
[\eta\wedge \overline{\eta}]
&=(\eta^{j}_{k,h}\theta^{k}\wedge \overline{\eta^{j'}_{k',h'}}\overline{\theta^{k'}})\otimes (\delta^{h}_{j'}\overline{\theta_{j}}\otimes \overline{\theta^{h'}}-\delta^{h'}_{j}\theta_{j'}\otimes \theta^{h})\\
&=\RE\left(\eta^{j}_{k,h}\overline{\eta^{h}_{k',h'}}\theta^{k}\wedge \overline{\theta^{k'}}\otimes \overline{\theta_{j}}\otimes \overline{\theta^{h'}}\right)
\end{align*}
then the Ricci component $\Ric([\eta\wedge\overline{\eta}])$ evaluated on $X=\RE(X^k\theta_k)$ and $Y=\RE(Y^k\theta_k)$ is the trace of $[\eta\wedge\overline{\eta}](\cdot,Y)X$, which is
\begin{align*}
[\eta\wedge&\overline{\eta}](\cdot,Y)X\\
&=\eta^{j}_{k,h}\overline{\eta^{h}_{u,v}}(\theta^{k} \overline{Y^{u}}-Y^k \overline{\theta^{u}})\otimes \overline{\theta_{j}}\otimes \overline{X^{v}}+
\overline{\eta^{j}_{k,h}}\eta^{h}_{u,v}(\overline{\theta^{k}} Y^{u}-\overline{Y^{u}}\theta^{k})\otimes \theta_{j}\otimes X^{v}\\
&=\RE\left(\eta^{j}_{k,h}\overline{\eta^{h}_{u,v}}(\theta^{k} \overline{Y^{u}}-Y^k \overline{\theta^{u}})\otimes \overline{\theta_{j}}\otimes \overline{X^{v}}\right).
\end{align*}
Its trace is therefore 
\begin{equation}
-\RE\left(\eta^{j}_{k,h}\overline{\eta^{h}_{j,v}}Y^k \overline{X^{v}}\right)=-\RE\left(\eta^{j}_{k,h}\overline{\eta^{h}_{u,j}}Y^k \overline{X^{u}}\right)=-\RE(h(\overline{\eta_X},\eta_Y)),
\end{equation}
or equivalently, $\Ric([\eta\wedge\overline{\eta}])=-\RE\left(\overline{\eta^{h}_{u,j}}\eta^{j}_{k,h}\overline{\theta^{u}}\theta^k \right)$.
Thus we obtain \eqref{eq:RicciPSK}.

From this tensor we can now obtain \eqref{eq:ScalCurv} by computing the scalar component, that is by taking the trace, raising the indices with $g$ and then dividing it by the dimension of $M$.
Thus the first summand gives $\scal_M$, the second gives $2(n+1)$ and the third
\begin{align}
\frac{1}{2n}\tr\left(-\RE\left(\overline{\eta^{h}_{u,j}}\eta^{j}_{k,h}(\overline{\theta^{u}})_\sharp\theta^k \right) \right)
&=-\frac{1}{2n}\tr\left(\RE\left(\overline{\eta^{h}_{u,j}}\eta^{j}_{k,h}(2\theta_{u})\theta^k \right)\right)\\*
&=-\frac{1}{n}\sum_{j,h,k} \RE\left(\eta^{j}_{k,h}\overline{\eta^{h}_{k,j}}\right)=-\frac{2}{n}\norm{\eta}_h^2.\qedhere
\end{align}
\end{proof}
In particular, since the norm of $\eta$ is non negative, we obtain a lower bound for the scalar curvature:
\begin{cor}
Let $(\pi\colon \widetilde{M}\to M,\nabla)$ be a projective special K\"{a}hler manifold, then
\begin{equation}
\scal_M\ge -2(n+1).
\end{equation}
Equality holds at a point if and only if the deviance vanishes at that point.
\end{cor}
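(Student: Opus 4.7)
The corollary is essentially an immediate rearrangement of the scalar curvature identity
\begin{equation}
\scal_M + 2(n+1) - \tfrac{2}{n}\norm{\eta}_h^2 = 0
\end{equation}
established in the preceding proposition. The plan is therefore to isolate $\scal_M$, argue that $\norm{\eta}_h^2 \ge 0$ everywhere, and then observe that the equality case is characterised by the vanishing of $\eta$.

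First I would rewrite the identity as $\scal_M = -2(n+1) + \tfrac{2}{n}\norm{\eta}_h^2$. The key point is that $\norm{\cdot}_h^2$ here really is a non-negative quantity: by Definition \ref{def:CSK} (condition \ref{cond:segnaturaPSK}) the metric $\widetilde{g}$ is positive definite on the horizontal distribution, so the induced metric $g$ on $M$ is Riemannian, and hence the Hermitian form $h = g + i\omega$ is positive definite on $T_{1,0}M$. Since $\norm{\eta}_h^2 = \sum_{j,k,h}\eta^j_{k,h}\overline{\eta^j_{k,h}}$ in any unitary frame, it is a sum of squared moduli and therefore non-negative. This immediately gives $\scal_M \ge -2(n+1)$.

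For the equality case, at a point $p\in M$ we have $\scal_M(p) = -2(n+1)$ iff $\norm{\eta_p}_h^2 = 0$, which by positive definiteness of $h$ forces every coefficient $\eta^j_{k,h}(p)$ to vanish, i.e.\ $\eta_p = 0$. The only thing to verify is that this condition is well-defined, since $\eta$ was only constructed from a local section $s\colon U\to S$. But as noted in the proof of Proposition \ref{prop:FibratoPrinC}, two deviances $\eta, \eta'$ coming from different sections on an overlap are related by $\eta' = c^2 \eta$ with $|c|=1$, so vanishing at a point is independent of $s$; equivalently, one can phrase the condition as the vanishing at $p$ of the intrinsic deviance $\gamma\colon \widetilde{M}\to \sharp_2 S_{3,0}M$ on the entire fibre $\pi^{-1}(p)$.

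There is no real obstacle here: the corollary is a direct consequence of the preceding proposition together with the positivity of $g$ on $M$. The only point requiring a short sentence of care is the well-definedness of the condition ``the deviance vanishes at $p$'', handled as above.
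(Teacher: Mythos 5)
Your argument is correct and matches the paper's (implicit) proof: the corollary is read off from the scalar curvature identity \eqref{eq:ScalCurv} together with the non-negativity of $\norm{\eta}_h^2$, with equality precisely when the deviance vanishes. Your extra remark that the vanishing condition is section-independent (via $\eta'=c^2\eta$, $|c|=1$, or equivalently via the intrinsic deviance) is a sensible point of care that the paper handles implicitly through Proposition \ref{prop:FibratoPrinC}.
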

\begin{rmk}
The lower bound is reached by projective special K\"{a}hler manifolds with zero deviance; we will see that this condition characterises the complex hyperbolic space (Proposition \ref{prop:uniquenessHyp}).
\end{rmk}

We can now state the main result:
\begin{theo}\label{theo:characterisationPSK}
On a $2n$-dimensional K\"{a}hler manifold $(M,g,I,\omega)$, to give a projective special K\"{a}hler structure is equivalent to give an $S^1$-bundle $\pi_S\colon S\to M$ endowed with a connection form $\varphi$ and a bundle map $\gamma\colon S\to \sharp_2 S_{3,0} M$ such that:
\begin{enumerate}
\item\label{theo:charPSKPunto1} $d\varphi=-2\pi_S^*\omega$;
\item\label{theo:charPSKPunto2} $\gamma(u a)=a^2\gamma(u)$ for all $a\in S^1$;
\item\label{theo:charPSKPunto3} for a certain choice of an open covering $\{U_\alpha|\alpha\in\mathcal{A}\}$ of $M$ and a family $\{s_\alpha\colon U_\alpha\to S\}_{\alpha\in\mathcal{A}}$ of sections, denoting by $\eta_\alpha$ the local $1$-form taking values in $T^{0,1}M\otimes T_{1,0}M$ determined by $\gamma\circ s_{\alpha}$, for all $\alpha\in\mathcal{A}$:
\begin{enumerate} [label=\textbf{D\arabic*}]
\item\label{theo:charpsk:CurvCond} $\qquad\Omega^{LC}+\Omega_{\Pj^n_{\C}}+[\eta_\alpha\wedge\overline{\eta_\alpha}]=0$;
\item\label{theo:charpsk:DiffCond} $\qquad d^{LC}\eta_\alpha=2i s_{\alpha}^*\varphi\wedge\eta_\alpha$.
\end{enumerate}
\end{enumerate}
In this case, \ref{theo:charPSKPunto3} is satisfied by every such family of sections.
\end{theo}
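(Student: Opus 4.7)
The plan splits according to the two directions of the equivalence. The forward implication (from a projective special K\"ahler structure to the data $(S,\varphi,\gamma)$) is essentially a collation of earlier material: I would set $S=r^{-1}(1)\subset\widetilde{M}$ with $\pi_S=\pi|_S$ and $\varphi=-\iota_S^*(\iota_\xi\widetilde{\omega})$, so that \ref{theo:charPSKPunto1} follows from Remark \ref{rmk:curvatureSbundle}; take $\gamma$ to be the intrinsic deviance, which gives \ref{theo:charPSKPunto2} by Proposition \ref{prop:FibratoPrinC}; then \ref{theo:charpsk:CurvCond} is \eqref{eq:curvatureEquation} and \ref{theo:charpsk:DiffCond} is Proposition \ref{prop:differentialCondition}.

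The substantive part is the reverse direction. Given the data $(S,\varphi,\gamma)$, I would build $\widetilde{M}$ as the associated $\C^*$-bundle $S\times_{S^1}\C^*$, let $r\colon\widetilde{M}\to\R^+$ be the norm projection, and take $\xi,\widetilde{I}\xi$ to be the fundamental vector fields of the $\R^+$- and rotation actions. The pseudo-K\"ahler structure is defined by
\begin{align*}
\widetilde{g}&=r^2\pi^*g-r^2\widetilde{\varphi}^2-dr^2, & \widetilde{\omega}&=r^2\pi^*\omega+r\widetilde{\varphi}\wedge dr,
\end{align*}
where $\widetilde{\varphi}$ is the extension of $\varphi$ via the associated-bundle construction as in Remark \ref{rmk:tau}; this forces condition \ref{cond:segnaturaPSK} to hold and makes every formula from Section 4 valid by construction. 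The candidate deviance tensor is defined locally by $\widetilde{\eta}:=\RE(z^2\pi^*\eta_\alpha)$; the equivariance \ref{theo:charPSKPunto2} is precisely what guarantees that these local expressions glue to a global section of $\sharp_2[\![S_{3,0}\widetilde{M}]\!]$. The candidate flat connection is then $\nabla:=\widetilde{\nabla}^{LC}+\widetilde{\eta}$.

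The main obstacle is verifying the conic special K\"ahler axioms for $\nabla$. That $\nabla$ is symplectic and torsion-free follows from $\widetilde{\eta}\in\sharp_2[\![S_{3,0}\widetilde{M}]\!]$, and $\nabla\xi=\id$ is immediate from the horizontality of $\widetilde{\eta}$ (so $\widetilde{\eta}_\xi=0$) together with $\widetilde{\nabla}^{LC}\xi=\id$, which can be read off from the metric formula; the identity $d^{\nabla}\widetilde{I}=0$ reduces, as in \eqref{eq:nabla I}, to the type condition on $\widetilde{\eta}$. The real work is flatness: by Proposition \ref{prop:SpezzamentoCurvatura} this reduces to $\widetilde{\Omega}^{LC}+\tfrac{1}{2}[\widetilde{\eta}\wedge\widetilde{\eta}]=0$ and $\widetilde{d}^{LC}\widetilde{\eta}=0$. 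Using Remark \ref{rmk:Levi-Civita_CSK} together with \eqref{eq:etawedgeeta}, the first equation pulls back from $M$ and becomes exactly \ref{theo:charpsk:CurvCond}; the computation underlying Proposition \ref{prop:differentialCondition} shows $\widetilde{d}^{LC}\widetilde{\eta}=\RE(z^2\pi^*(d^{LC}\eta_\alpha-2is_\alpha^*\varphi\wedge\eta_\alpha))$, so the second is \ref{theo:charpsk:DiffCond}.

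For the final assertion, if $s'_\beta=s_\alpha c$ on an overlap with $c$ valued in $S^1$, then Proposition \ref{prop:FibratoPrinC} gives $\eta'_\beta=c^2\eta_\alpha$, and the standard transformation of a principal connection shows $(s'_\beta)^*\varphi$ differs from $s_\alpha^*\varphi$ by a logarithmic derivative of $c$. Direct substitution, together with the invariance of $[\eta\wedge\overline{\eta}]$ recorded in Remark \ref{rmk:sezioneGlobaleH}, shows that both \ref{theo:charpsk:CurvCond} and \ref{theo:charpsk:DiffCond} are preserved under changes of trivializing section; this is where I expect the bookkeeping to be most delicate, since $d^{LC}(c^2\eta_\alpha)$ generates extra terms that must cancel precisely against the $c^{-1}dc$ contribution in $(s'_\beta)^*\varphi$.
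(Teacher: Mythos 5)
Your proposal is correct and takes essentially the same route as the paper: the forward direction collates Remark \ref{rmk:curvatureSbundle}, Proposition \ref{prop:FibratoPrinC}, equation \eqref{eq:curvatureEquation} and Proposition \ref{prop:differentialCondition}, while the converse builds $\widetilde{M}\cong S\times\R^+$ with the same $\widetilde{g}$, $\widetilde{\omega}$, $\widetilde{I}$, glues the local tensors $\RE(z_\alpha^2\pi^*\eta_\alpha)$ into a global $\widetilde{\eta}$ using the equivariance of $\gamma$, and reduces the conic special K\"ahler axioms for $\nabla=\widetilde{\nabla}^{LC}+\widetilde{\eta}$ (in particular flatness, via Proposition \ref{prop:SpezzamentoCurvatura}, Remark \ref{rmk:Levi-Civita_CSK} and \eqref{eq:etawedgeeta}) to conditions \ref{theo:charpsk:CurvCond} and \ref{theo:charpsk:DiffCond}. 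The only minor deviation is the final assertion, which you handle by a direct change-of-section computation, whereas the paper obtains it for free since its forward-direction arguments (Proposition \ref{prop:differentialCondition} and the derivation of \eqref{eq:curvatureEquation}) are carried out for an arbitrary local section; both routes work.
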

\begin{proof}
Given a projective special K\"{a}hler manifold, we define $S:=r^{-1}(1)\subset \widetilde{M}$ and $\varphi:=-\iota_\xi\omega|_{S}$.
The principal action on $S$ is generated by $I\xi$ which is tangent to $S$ since $T_uS=\ker(dr)$ and $dr(I\xi)=-\frac{1}{r}\xi^\flat (I\xi)=-\frac{\widetilde{g}(\xi,I\xi)}{r}$.
The curvature is then $d\varphi=-2\pi_S^*\omega$ as shown in Remark \ref{rmk:curvatureSbundle}, so the first point is satisfied.
The second condition holds thanks to Proposition \ref{prop:FibratoPrinC}.
For the third point, we get \ref{theo:charpsk:CurvCond} from the arguments leading to equation \eqref{eq:curvatureEquation} and \ref{theo:charpsk:DiffCond} from Proposition \ref{prop:differentialCondition}.

In order to prove the other direction, define $\widetilde{M}:=S\times \R^+$, $\pi:=\pi_S\circ \pi_1\colon \widetilde{M}\to M$, and $t:=\pi_2\in\smooth{\widetilde{M},\R^+}$, where $\pi_1\colon S\times \R^+\to S$ and $\pi_2\colon S\times \R^+\to \R^+$ are the projections.
Let $\widetilde{\varphi}:=\pi_1^*\varphi$, in particular $d\widetilde{\varphi}=\pi_1^*d\varphi=-2\pi^*\omega$ as expected.
Define now
\begin{equation}
\widetilde{g}:=t^2\pi^*g-t^2\widetilde{\varphi}^2-dt^2
\end{equation}
which is non-degenerate, since $r\widetilde{\varphi}$ and $dt$ are linearly independent and transverse to $\pi$, so we can form a basis for the $1$-forms according to which we can see that $\widetilde{g}$ has signature $(2n,2)$.
Extend now $I$ to $\widetilde{I}$ so that $\widetilde{I}\cdot(\pi^*\alpha)=\pi^*I\alpha$ for all $\alpha\in T^*M$ and $\widetilde{I}\cdot(dt)=t\widetilde{\varphi}$.

The metric $\widetilde{g}$ is compatible with $\widetilde{I}$ since $\widetilde{I}\cdot \widetilde{g}=t^2\widetilde{I}\cdot\pi^*g-(\widetilde{I}\cdot t\widetilde{\varphi})^2-(\widetilde{I}\cdot dt)^2=t^2\pi^*(I\cdot g)-(-dt)^2-(t\widetilde{\varphi})^2=t^2\pi^*(I\cdot g)-dt^2-t^2\widetilde{\varphi}^2=\widetilde{g}$.

We thus have a K\"{a}hler manifold $(\widetilde{M},\widetilde{g},\widetilde{I},\widetilde{\omega})$, where 
\begin{equation}
\widetilde{\omega}:=t^2\pi^*\omega+t\widetilde{\varphi}\wedge dt.
\end{equation}

Let $\xi:=t\partial_t$ where $\partial_t$ is the vector field corresponding to the coordinate derivation on $\R^+$.
Notice that the function $r=\sqrt{-\widetilde{g}(\xi,\xi)}$ coincides with $t$, as $\sqrt{-\widetilde{g}(t\partial_t,t\partial_t)}=\sqrt{-t^2 \widetilde{g}(\partial_t,\partial_t)}=t$.
In particular $\widetilde{g}(\xi,\xi)=-t^2\neq 0$ and $\widetilde{g}(\widetilde{I}\xi,\widetilde{I}\xi)=\widetilde{g}(\xi,\xi)<0$, so $\widetilde{g}$ is negative definite on $\langle\xi,I\xi\rangle$ and hence positive definite on the orthogonal complement.

Let now $\theta$ be a unitary coframe on an open subset $U\subseteq M$, then we can lift it to a complex coframe $\widetilde{\theta}$ on $\pi^{-1}(U)$ defined as in \eqref{eq:coframe Mtilde}.
It is straightforward to check that $\widetilde{\theta}$ is adapted to the pseudo-K\"{a}hler structure of $\widetilde{M}$.
Notice that the proof of Proposition \ref{prop:conicLC} is still valid in this situation even though we do not know whether $\widetilde{M}\to M$ has a structure of projective special K\"{a}hler manifold; this gives us a description of the Levi-Civita connection form on $\widetilde{M}$ with respect to $\widetilde{\theta}$.
Notice that $\widetilde{\theta}^k(\xi)=0$ for $k\le n$ and $\widetilde{\theta}^{n+1}(\xi)=dt(t\partial_t)+i\widetilde{\varphi}(t\partial_t)=t$ so $\xi=\RE(t\widetilde{\theta}_{n+1})$.
We can thus compute
\begin{align}
\widetilde{\nabla}^{LC}\xi
&=dt\otimes \RE(\widetilde{\theta}_{n+1})+t\widetilde{\nabla}^{LC}\RE(\widetilde{\theta}_{n+1})\\
&=\RE(dt\otimes \widetilde{\theta}_{n+1})+\frac{t}{r}\RE\left(\sum_{k=1}^n \widetilde{\theta}^k\otimes\widetilde{\theta}_k+ i\Im(\widetilde{\theta}^{n+1})\otimes\widetilde{\theta}_{n+1}\right)\\
&=\RE\left(\sum_{k=1}^{n+1} \widetilde{\theta}^k\otimes\widetilde{\theta}_k\right)
=\id.
\end{align}

Each section $s_\alpha$ corresponds to the trivialisation $(\pi|_{\pi^{-1}(U)},z_{\alpha})\colon \pi^{-1}U\to U\times \C^*$ in the sense that $s(\pi(u))\cdot z_{\alpha}(u)=u$ for all $u\in\pi^{-1}(U_\alpha)$.
For all $\alpha$ on $\pi^{-1}(U_{\alpha})$, define the tensor $\widetilde{\eta}_\alpha:=\RE(z_\alpha^2\pi^*\eta_\alpha)$.
The family $\{\widetilde{\eta}_\alpha\}_{\alpha\in\mathcal{A}}$ is compatible on intersections $U_1\cap U_2$, in fact if $s_1=cs_2$ for $c\in \U(1)$, then $z_2=cz_1$ and $\eta_1=\gamma\circ s_1=\gamma\circ cs_2=c^2\gamma\circ s_2=c^2\eta_2$, so
\begin{equation}
\widetilde{\eta_1}
=\RE(z_1^2\pi^*\eta_1)
=\RE(z_1^2c^2\pi^*\eta_2)
=\RE(z_2^2\pi^*\eta_2)
=\widetilde{\eta_2}.
\end{equation}
Therefore, this family glues to form a tensor $\widetilde{\eta}\in\sharp_2 S^3\widetilde{M}$.

We can build another connection $\nabla:=\widetilde{\nabla}^{LC}+\widetilde{\eta}$.
Notice that $\nabla\xi=\widetilde{\nabla}^{LC}\xi+\widetilde{\eta}(\xi)=\id+\RE(z_{\alpha}^2\pi^*\eta_{\alpha})(\xi)=\id$ because locally $\eta_\alpha$ is horizontal for all $\alpha$.

In order to prove that $\nabla$ is symplectic, since the Levi-Civita connection is symplectic, it is enough to prove that $\widetilde{\omega}(\widetilde{\eta},\cdot)+\widetilde{\omega}(\cdot,\widetilde{\eta})=0$.
Locally, $\widetilde{\omega}=\frac{1}{2i}\sum_{k=1}^{n+1}\overline{\widetilde{\theta}^k}\wedge \widetilde{\theta}^k$ and in fact, for all $X=\RE(X^k\widetilde{\theta}_k)$, $Y=\RE(Y^k\widetilde{\theta}_k)$, $Z=\RE(Z^k\widetilde{\theta}_k)$ vector fields on $\widetilde{M}$:
\begin{align*}
2i(\widetilde{\omega}(\widetilde{\eta}_X Y,Z)+\widetilde{\omega}(Y,\widetilde{\eta}_X Z))
&=\sum_{k=1}^{n+1}\left(\overline{\widetilde{\theta}^k}(\widetilde{\eta}_X Y)\widetilde{\theta}^k(Z)
-\widetilde{\theta}^k(\widetilde{\eta}_X Y)\overline{\widetilde{\theta}^k}(Z)\right.\\
&\quad\left.+\overline{\widetilde{\theta}^k}(Y)\wedge \widetilde{\theta}^k(\widetilde{\eta}_X Z)
-\widetilde{\theta}^k(Y)\wedge \overline{\widetilde{\theta}^k}(\widetilde{\eta}_X Z)\right)\\
&=\sum_{k=1}^{n+1}\left(
z \pi^*\eta^k_{u,v}X^u Y^v Z^k
-\overline{Z^k}\overline{z}^2\overline{\pi^*\eta}^k_{u,v}\overline{X^u} \overline{Y^v} \right.\\
&\quad +\left.\overline{Y}^k\overline{z}^2\overline{\pi^*\eta}^k_{u,v}\overline{X^u} \overline{Z^v}
-z^2\pi^*\eta^k_{u,v}X^u Z^v Y^k
\right)\\
&=\sum_{k=1}^{n+1}\RE\left(
z^2 \pi^*\eta^k_{u,v}X^u Y^v Z^k
-z^2\pi^*\eta^k_{u,v}X^u Z^v Y^k
\right)\\
&=\sum_{k=1}^{n+1}\RE\left(
z^2 \pi^*(\eta^k_{u,v}-\eta^v_{u,k})X^u Y^v Z^k
\right).
\end{align*}
By the symmetry of $\eta$, this quantity vanishes.

Proving that $d^{\nabla}\widetilde{I}=0$, is equivalent to proving that $\nabla \widetilde{I}$ is symmetric in the two covariant indices, and thus $\nabla \widetilde{I}=\widetilde{\nabla}^{LC}\widetilde{I}+[\widetilde{\eta},\widetilde{I}]=[\widetilde{\eta},\widetilde{I}]$.
Since $\widetilde{I}=\RE(i\widetilde{\theta}_k\widetilde{\theta}^k)$, we have
\begin{align*}
[\widetilde{\eta},\widetilde{I}]
&=iz^2\pi^*\eta^u_{v,w}\widetilde{\theta}^v\otimes\overline{\widetilde{\theta}_u}\otimes\widetilde{\theta}^w
-i\overline{z^2\pi^*\eta^u_{v,w}\widetilde{\theta}^v\otimes\overline{\widetilde{\theta}_u}\otimes\widetilde{\theta}^w}\\
&\quad +iz^2\pi^*\eta^u_{v,w}\widetilde{\theta}^v\otimes\overline{\widetilde{\theta}_u}\otimes\widetilde{\theta}^w
-i\overline{z^2\pi^*\eta^u_{v,w}\widetilde{\theta}^v\otimes\overline{\widetilde{\theta}_u}\otimes\widetilde{\theta}^w}
=2i\widetilde{\eta}=-2\widetilde{I}\widetilde{\eta},
\end{align*}
which is symmetric, proving $d^{\nabla}I=0$.

For the flatness of $\nabla$, we compute the curvature locally
\begin{align*}
\Omega^\nabla
=d\omega^{\nabla}+\frac{1}{2}[\omega^{\nabla}\wedge\omega^{\nabla}]
=\widetilde{\Omega}^{LC}+\widetilde{d}^{LC}\widetilde{\eta}+\frac{1}{2}[\widetilde{\eta}\wedge\widetilde{\eta}].
\end{align*}
By Proposition \ref{prop:conicLC}, $\widetilde{\Omega}^{LC}=r^2\pi^*(\Omega^{LC}+\Omega_{\Pj^n_{\C}})$.
For the same reasoning exposed in the proof of Proposition \ref{prop:differentialCondition}, $\widetilde{d}^{LC}\widetilde{\eta}=0$ if and only if $d^{LC}\eta-2is^*\varphi\wedge \eta=0$, which is granted by \ref{theo:charpsk:DiffCond}.

Finally, the computations leading to equation \eqref{eq:etawedgeeta} still apply and thus we can deduce that
\begin{equation}
\Omega^{\nabla}=r\pi^*(\Omega^{LC}+\Omega_{\Pj^n_\C}+[\eta\wedge\overline{\eta}])=0,
\end{equation}
making the connection $\nabla$ flat.

Notice that $\pi\colon \widetilde{M}\to M$ is a principal $\C^*$-bundle, where for all $l e^{i\theta}\in\C^*$ and $(u,t)\in\widetilde{M}$:
\begin{equation}
(u,t)l e^{i\theta}:=(u e^{i\theta},tl).
\end{equation}
The infinitesimal vector field corresponding to $1$ at $(u,t_0)$ is $\xi_{(u,t_0)}$ and the one corresponding to $i$ is $X:=\frac{d}{dt}((u,t_0)\exp(it))|_{t=0}=\frac{d}{dt}(ue^{it},t_0)|_{t=0}$, which is vertical and such that $\widetilde{\varphi}(X)=\varphi(p_*X)=\varphi(\frac{d}{dt}(ue^{it})|_{t=0})=1$ and $dr(X)=0$.
This means that $X=I\xi$ since $\widetilde{g}(X,\cdot)=-r^2\widetilde{\varphi}=-rIdr=I\xi^{\flat}$.

We are only left to prove that $M$ is the K\"{a}hler quotient or $\widetilde{M}$ with respect to the $\U(1)$-action and in order to do so, notice that $\widetilde{\omega}(I\xi,\cdot)=-\widetilde{g}(\xi,\cdot)=rdr=d\left(\frac{r^2}{2}\right)$, so $\mu:=\frac{r^2}{2}$ is a moment map for $I\xi$.
Notice that $\mu^{-1}(\frac{1}{2})=S\times\{1\}$ and $S$ is a principal bundle so, by definition of $\widetilde{g}$ and $\widetilde{\omega}$, $S/\U(1)$ is isometric to $M$ and this ends the proof.
\end{proof}
\begin{rmk}
Starting from the family $\{\eta_\alpha\}_{\alpha}$, we can build a bundle map $\gamma\colon S\to M$ as long as the $\eta_\alpha$'s are linked by the relation $\eta_\alpha=g_{\alpha,\beta}^2\eta_\beta$ where $g_{\alpha,\beta}$ is a cocycle defining $S$.
\end{rmk}

\begin{rmk}\label{rmk:trivial_cohomology2=trivial_line_bundles}
Let $(M,g,I)$ be a K\"{a}hler manifold, then if $H^2(M,\Z)=0$, in particular, every complex line bundle and every circle bundle are trivial.
Moreover, by de Rham's theorem, $H^2_{dR}(M)=H^2(M,\R)=H^2(M,\Z)\otimes\R=0$, so in particular $\omega=d\lambda$ for some $\lambda\in\Omega^1(M)$.
\end{rmk}
\begin{cor}\label{cor:charPSKesatte}
A K\"{a}hler $2n$-manifold $(M,g,I,\omega)$ such that $H^2(M,\Z)=0$, has a projective special K\"{a}hler structure if and only if there exists a section $\eta\colon M\to \sharp_2 S_{3,0}M$ such that
\begin{enumerate} [label=\textbf{D\arabic*\textsuperscript{*}}]
\item\label{eq:condCurvCor} $\qquad \Omega^{LC}+\Omega_{\Pj^n_{\C}}+[\eta\wedge\overline{\eta}]=0;$
\item\label{eq:condDiffCor} $\qquad d^{LC}\eta=-4i\lambda\wedge \eta$;
\end{enumerate}
for some $\lambda\in\Omega^{1}(M)$ such that $d\lambda=\omega$.
\end{cor}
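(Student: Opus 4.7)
The plan is to derive the corollary directly from Theorem \ref{theo:characterisationPSK} by exploiting the hypothesis $H^2(M,\Z)=0$ to promote the local data supplied by that theorem to global data. Remark \ref{rmk:trivial_cohomology2=trivial_line_bundles} is the key enabler: it tells us that every circle bundle over $M$ is trivial and that $\omega$ is exact.

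For the forward implication, I would assume $M$ carries a projective special K\"ahler structure and apply Theorem \ref{theo:characterisationPSK} to obtain the bundle $\pi_S\colon S\to M$, the connection $\varphi$, and the bundle map $\gamma\colon S\to\sharp_2 S_{3,0}M$. Since $S$ is classified by an element of $H^2(M,\Z)=0$, it admits a global section $s\colon M\to S$; I then set $\eta:=\gamma\circ s$ and $\lambda:=-\tfrac{1}{2}s^*\varphi$. Condition \ref{theo:charpsk:CurvCond} of the theorem immediately yields \ref{eq:condCurvCor}. Pulling $d\varphi=-2\pi_S^*\omega$ back by $s$ gives $d\lambda=\omega$, and condition \ref{theo:charpsk:DiffCond} rewrites as $d^{LC}\eta=2is^*\varphi\wedge\eta=-4i\lambda\wedge\eta$, which is \ref{eq:condDiffCor}.

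For the converse, starting from $\eta$ and $\lambda$ satisfying \ref{eq:condCurvCor} and \ref{eq:condDiffCor}, I would construct the data required by Theorem \ref{theo:characterisationPSK} on the trivial circle bundle $S:=M\times S^1$ with projection $\pi_S$ onto the first factor. Define $\varphi:=d\theta-2\pi_S^*\lambda$, where $\theta$ is the angular coordinate on $S^1$; then $d\varphi=-2\pi_S^*d\lambda=-2\pi_S^*\omega$, verifying condition \ref{theo:charPSKPunto1}. Define $\gamma\colon S\to\sharp_2 S_{3,0}M$ by $\gamma(p,e^{i\theta}):=e^{2i\theta}\eta(p)$, which satisfies $\gamma(ua)=a^2\gamma(u)$ and hence condition \ref{theo:charPSKPunto2}. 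Take the trivial one-element covering with section $s(p):=(p,1)$; then $\gamma\circ s=\eta$ and $s^*\varphi=-2\lambda$, so the theorem's requirements \ref{theo:charpsk:CurvCond} and \ref{theo:charpsk:DiffCond} for this section reduce exactly to the hypotheses \ref{eq:condCurvCor} and \ref{eq:condDiffCor}. Applying Theorem \ref{theo:characterisationPSK} then produces the desired projective special K\"ahler structure.

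The proof is essentially a dictionary between the formulations of the theorem and the corollary; there is no genuine obstacle beyond the bookkeeping of signs and normalisations. The one point that must be kept straight is the factor $\lambda=-\tfrac{1}{2}s^*\varphi$, which simultaneously absorbs the $-2$ in $d\varphi=-2\pi_S^*\omega$ (yielding $d\lambda=\omega$) and converts the coefficient $2i$ in the theorem's differential condition into the $-4i$ that appears in \ref{eq:condDiffCor}.
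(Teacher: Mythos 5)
Your proposal is correct and follows essentially the same route as the paper's own proof: triviality of the circle bundle from $H^2(M,\Z)=0$ gives a global section $s$ with $\eta=\gamma\circ s$ and $\lambda=-\tfrac{1}{2}s^*\varphi$ in one direction, and in the other direction you build the trivial bundle $M\times S^1$ with connection $\pi_2^*d\vartheta-2\pi_S^*\lambda$ and $\gamma(p,a)=a^2\eta(p)$, exactly as the paper does. The sign and normalisation bookkeeping ($d\lambda=\omega$, $2i\,s^*\varphi=-4i\lambda$) matches, and the implicit use of the theorem's final clause that condition (3) holds for every family of sections is legitimate.
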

\begin{proof}
If $M$ has a projective special K\"{a}hler structure, then from Theorem \ref{theo:characterisationPSK} we obtain an $S^1$-bundle $p\colon S\to M$ and the map $\gamma\colon S\to \sharp_2 S_{3,0}M$.
Consider the corresponding line bundle $L=S\times_{\U(1)}\C$.
As noted in Remark \ref{rmk:trivial_cohomology2=trivial_line_bundles}, we can assume $L=M\times \C$ and $S=M\times S^1$.
In particular, there is a global section $s\colon M\to S$ and if we call $\eta=\gamma\circ s\colon M\to \sharp_2S_{3,0}M$, it is a global section satisfying the curvature equation thanks to Theorem \ref{theo:characterisationPSK}.
Defining $\lambda:=-\frac{1}{2}s^*\varphi$, we have $d\lambda=-\frac{1}{2}s^*(-2\pi_S^*\omega)=(\pi_S s)^* \omega=\omega$ and thus also the differential condition is satisfied by Theorem \ref{theo:characterisationPSK}.

Conversely, by de Rham's Theorem, we have $\lambda\in\Omega^1(M)$ such that $d\lambda=\omega$.
We define $\pi_S=\pi_1\colon S=M\times S^1\to M$ and choose as connection the form $\varphi=\pi_2^*d\vartheta-2\pi_S^*\lambda$, where $d\vartheta$ is the fundamental $1$-form on $S^1=\U(1)$.
Then $d\varphi=0-2\pi_S^*d\lambda=-2\pi_S^*\omega$, so $S\to M$ has the desired curvature.
Moreover, it is trivial, so we have a global section $s\colon M\to S$ mapping $p$ to $(p,1)$.

Given $\eta\colon M\to \sharp_2S_{3,0}M$ as in the statement, we define $\gamma\colon S\to \sharp_2 S_{3,0}M$ such that $\gamma(p,a):=a^2\eta(p)$ for all $p\in M$ and $a\in \U(1)$.
Notice that $\gamma\circ s=\gamma(\cdot,1)=\eta$, so the curvature equation of this corollary gives the curvature equation in Theorem \ref{theo:characterisationPSK} and the same is true for the differential condition, since $s^*\varphi=s^*\pi_2^*d\vartheta-2s^*\pi_S^*\lambda=0-2\lambda$.
By Theorem \ref{theo:characterisationPSK}, $M$ is thus projective special K\"{a}hler.
\end{proof}
\begin{rmk}
Instead of requiring a section $\eta$ as in Corollary \ref{cor:charPSKesatte}, we could use a section $\sigma$ of $S_{3,0}M$ such that $\sharp_2\sigma=\eta$.
\end{rmk}
\section{Varying the projective special K\"{a}hler structure by a \texorpdfstring{$\U(1)$}{TEXT}-valued function}
Theorem \ref{theo:characterisationPSK} allows to find a whole class of projective special K\"{a}hler structures from a given one, as shown in the following
\begin{prop}\label{prop:PSKcerchioConnessioni}
Let $(\pi\colon \widetilde{M}\to M,\nabla)$ be a projective special K\"{a}hler manifold, let $\gamma\colon S\to \sharp_2 S_{3,0}M$ be its intrinsic deviance and $\varphi\in\Omega^1(S)$ the principal connection form on $\pi_S\colon S\to M$, then for all $\beta\in\smooth{M,\U(1)}$ there is a new projective special K\"{a}hler structure $(\pi\colon \widetilde{M}^{\beta}\to M,\nabla^{\beta})$ with intrinsic deviance $\gamma^\beta=\beta\gamma\colon S\to\sharp_2 S_{3,0}M$ on the same bundle $S$, with principal connection form $\varphi^{\beta}=\pi_S^*\left(\frac{d\beta}{2i\beta}\right)+\varphi$.
\end{prop}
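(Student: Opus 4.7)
The strategy is to apply Theorem \ref{theo:characterisationPSK} in the direction that reconstructs a projective special K\"ahler structure from the data $(S,\varphi^{\beta},\gamma^{\beta})$. We keep the underlying K\"ahler manifold $(M,g,I,\omega)$ and the same principal $S^1$-bundle $\pi_S\colon S\to M$, so the plan reduces to verifying the three conditions of the theorem for the new connection form and intrinsic deviance.

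For condition \ref{theo:charPSKPunto1}, I would first observe that, since $\beta$ takes values in $\U(1)$, the logarithmic derivative $d\beta/\beta$ is purely imaginary and closed: $d(d\beta/\beta)=d^{2}\beta/\beta-d\beta\wedge d\beta/\beta^{2}=0$. Pulling back via $\pi_S$ and adding $\varphi$ gives $d\varphi^{\beta}=d\varphi=-2\pi_S^{*}\omega$. For condition \ref{theo:charPSKPunto2}, equivariance of $\gamma^{\beta}$ is immediate: for $a\in S^1$,
\begin{equation}
\gamma^{\beta}(ua)=\beta(\pi_S(ua))\,\gamma(ua)=\beta(\pi_S(u))\,a^{2}\gamma(u)=a^{2}\gamma^{\beta}(u),
\end{equation}
because $\beta$ is constant along fibres of $\pi_S$.

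The only nontrivial check is condition \ref{theo:charPSKPunto3}. Pick any family of local sections $\{s_{\alpha}\colon U_{\alpha}\to S\}$ as in the theorem, and set $\eta^{\beta}_{\alpha}:=\gamma^{\beta}\circ s_{\alpha}=\beta\,\eta_{\alpha}$, where $\eta_{\alpha}=\gamma\circ s_{\alpha}$ satisfies \ref{theo:charpsk:CurvCond} and \ref{theo:charpsk:DiffCond} by hypothesis. The curvature condition is preserved because $|\beta|^{2}=1$ gives $[\eta^{\beta}_{\alpha}\wedge\overline{\eta^{\beta}_{\alpha}}]=|\beta|^{2}[\eta_{\alpha}\wedge\overline{\eta_{\alpha}}]=[\eta_{\alpha}\wedge\overline{\eta_{\alpha}}]$, so \ref{theo:charpsk:CurvCond} for $\eta^{\beta}_{\alpha}$ coincides with that for $\eta_{\alpha}$. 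For the differential condition, Leibniz yields
\begin{equation}
d^{LC}\eta^{\beta}_{\alpha}=d\beta\wedge\eta_{\alpha}+\beta\,d^{LC}\eta_{\alpha}=d\beta\wedge\eta_{\alpha}+2i\beta\,s_{\alpha}^{*}\varphi\wedge\eta_{\alpha},
\end{equation}
while using $\pi_S\circ s_{\alpha}=\id_{U_{\alpha}}$ we compute
\begin{equation}
2i\,s_{\alpha}^{*}\varphi^{\beta}\wedge\eta^{\beta}_{\alpha}=2i\left(\tfrac{d\beta}{2i\beta}+s_{\alpha}^{*}\varphi\right)\wedge\beta\eta_{\alpha}=d\beta\wedge\eta_{\alpha}+2i\beta\,s_{\alpha}^{*}\varphi\wedge\eta_{\alpha},
\end{equation}
matching the previous expression.

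The main obstacle, such as it is, lies in recognising that the logarithmic correction $d\beta/(2i\beta)$ in $\varphi^{\beta}$ is precisely tailored to absorb the $d\beta$ produced by Leibniz on $\beta\eta_{\alpha}$; once this cancellation is seen, both the closedness argument for $d\varphi^{\beta}$ and the compatibility with the curvature condition follow essentially from $|\beta|^{2}=1$. Theorem \ref{theo:characterisationPSK} then assembles $(S,\varphi^{\beta},\gamma^{\beta})$ into a projective special K\"ahler structure $(\pi\colon\widetilde{M}^{\beta}\to M,\nabla^{\beta})$ with intrinsic deviance $\gamma^{\beta}=\beta\gamma$, as claimed.
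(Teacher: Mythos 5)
Your proposal is correct and follows essentially the same route as the paper: both verify the three conditions of Theorem \ref{theo:characterisationPSK} for the data $(S,\varphi^{\beta},\gamma^{\beta})$, with the same Leibniz computation showing that the term $\pi_S^*\bigl(\frac{d\beta}{2i\beta}\bigr)$ absorbs the $d\beta\wedge\eta_\alpha$ contribution, and the same use of $|\beta|^2=1$ for the curvature condition (your explicit check of the equivariance $\gamma^{\beta}(ua)=a^2\gamma^{\beta}(u)$ and of the reality of $\varphi^{\beta}$ matches the paper's remarks). No gaps.
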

\begin{proof}
We want to use Theorem \ref{theo:characterisationPSK}, so consider the same bundle $\pi_S\colon S\to M$, but with the new connection form $\varphi^\beta$.
Notice that $\varphi^\beta$ is a real form, in fact $\overline{\beta}\beta=1$, so 
\begin{equation}
0=\beta d\overline{\beta}+\overline{\beta}d\beta
=\overline{\beta}\beta \left(\frac{d\overline{\beta}}{\overline{\beta}}+\frac{d\beta}{\beta}\right)
=\left(\overline{\left(\frac{d\beta}{\beta}\right)}+\frac{d\beta}{\beta}\right)
=2\Re\left(\frac{d\beta}{\beta}\right),
\end{equation}
and thus $\Im\left(\frac{d\beta}{2i\beta}\right)=-\frac{1}{2}\Re\left(\frac{d\beta}{\beta}\right)=0$.
Moreover $d\varphi^\beta=-\pi_S^*\left(\frac{d\beta\wedge d\beta}{\beta^2}\right)+d\varphi=d\varphi=-2\pi^*\omega$, so condition \ref{theo:charPSKPunto1} of Theorem \ref{theo:characterisationPSK} this is a compatible principal connection form.
The bundle map $\gamma^\beta$ is still homogeneous of degree $2$.
We are only left to prove the two conditions of point \ref{theo:charPSKPunto3}, so consider a family of sections $\{(U_\alpha,s_\alpha)\}_{\alpha\in\mathcal{A}}$ corresponding to a trivialisation of $S$ and let $\eta^\beta_\alpha:=\gamma^\beta\circ s_\alpha=\beta\gamma\circ s_{\alpha}=\beta \eta_\alpha$.
We thus have
\begin{align*}
d^{LC}\eta^\beta_\alpha
&=d^{LC}(\beta\eta_\alpha)
=d\beta\wedge \eta_\alpha+\beta 2i s_\alpha^*\varphi\wedge \eta_\alpha
=2i\left(\frac{d\beta}{2i\beta}+ s_\alpha^*\varphi\right)\wedge e^{2i\beta}\eta_\alpha\\
&=2is_\alpha^*\left(d\pi_S^*\left(\frac{d\beta}{2i\beta}\right)+ s_\alpha^*\varphi\right)\wedge \eta^\beta_\alpha
=2is_\alpha^*\varphi^\beta\wedge \eta^\beta_\alpha.
\end{align*}
As for the curvature condition \ref{theo:charpsk:CurvCond}, it still holds because 
\begin{equation*}
[\eta_\alpha^\beta\wedge\overline{\eta_\alpha^\beta}]=[\beta\eta_\alpha\wedge \overline{\beta\eta_\alpha}]=[\eta_\alpha\wedge\overline{\eta_\alpha}].
\qedhere
\end{equation*}
\end{proof}

These modified deviances do not always provide an entirely new projective special K\"{a}hler structure.
Before entering into the details, we recall the following elementary result.
\begin{lemma}\label{lemma:differential_action}
Let $M$ be a smooth manifold and $G$ a Lie group with Lie algebra $\lie{g}$ such that there is a smooth right action
\begin{equation}
r\colon M\times G\longrightarrow M.
\end{equation}
Then, the differential of $r$ at a point $(x,a)$ is
\begin{equation}
r_*(X,A)=(R_a)_*(X)+A^{\circ},
\end{equation}
for all $X\in T_xM$, $A\in\lie{g}$, where $A^{\circ}$ denotes the fundamental vector field associated to $A$.
\end{lemma}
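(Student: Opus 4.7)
The plan is to exploit the linearity of the differential by splitting the tangent vector $(X,A)\in T_xM\oplus T_aG$ into its two natural components and computing each contribution separately. Implicitly, $A\in\lie{g}$ is identified with an element of $T_aG$ via left translation, i.e.\ with $(L_a)_*A$; this is the convention consistent with the usual definition of the fundamental vector field $A^{\circ}_y=\frac{d}{dt}\big|_{t=0}y\cdot\exp(tA)$.

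First, I would handle the $M$-component $(X,0)$. Choosing any smooth curve $\gamma\colon (-\varepsilon,\varepsilon)\to M$ with $\gamma(0)=x$ and $\dot{\gamma}(0)=X$, the curve $t\mapsto(\gamma(t),a)$ realises $(X,0)$ as a velocity vector in $T_{(x,a)}(M\times G)$. Its image under $r$ is $r(\gamma(t),a)=R_a(\gamma(t))$, whose derivative at $t=0$ is $(R_a)_*(X)$ by the chain rule.

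Next, for the $G$-component $(0,A)$, I would use the curve $t\mapsto(x,a\exp(tA))$ in $M\times G$; its velocity at $t=0$ is $(0,(L_a)_*A)$, which is what we mean by $(0,A)$ under the above identification. Applying $r$ and using the right-action property $r(x,a\exp(tA))=r(r(x,a),\exp(tA))=(x\cdot a)\cdot\exp(tA)$ yields, upon differentiating at $t=0$, exactly the fundamental vector field $A^{\circ}$ evaluated at $x\cdot a=R_a(x)$.

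Finally, summing the two contributions via linearity of $r_*$ at $(x,a)$ gives $r_*(X,A)=(R_a)_*(X)+A^{\circ}$, completing the proof. There is no real obstacle here; the only subtlety is being explicit about how $A\in\lie{g}$ is promoted to a tangent vector at $a$, which I would state at the outset to avoid confusion between the left-translation identification and the appearance of $R_a$ on the $M$-side.
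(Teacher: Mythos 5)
Your argument is correct. Note that the paper does not actually prove this lemma: it simply refers to \cite[Ex.\ 27.4, p.\ 326]{TuDG}, so there is no in-paper proof to compare against; your computation is precisely the standard verification that the cited exercise calls for. The decomposition of $(X,A)\in T_{(x,a)}(M\times G)\cong T_xM\oplus T_aG$ into $(X,0)$ and $(0,A)$, the use of the curves $t\mapsto(\gamma(t),a)$ and $t\mapsto(x,a\exp(tA))$, and the identity $x\cdot(a\exp(tA))=(x\cdot a)\cdot\exp(tA)$ are exactly the right ingredients, and you correctly flag the only genuine subtlety, namely that $A\in\lie{g}$ is promoted to the tangent vector $(L_a)_*A\in T_aG$, which is the convention under which the derivative of $t\mapsto (x\cdot a)\exp(tA)$ is the fundamental vector field $A^{\circ}$ evaluated at the image point $r(x,a)=x\cdot a$ (the point at which $A^{\circ}$ must be read in the statement). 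Summing the two contributions by linearity of the differential completes the proof; no gaps.
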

\begin{proof}
See e.g.\ \cite[Ex.\ 27.4, p.\ 326]{TuDG}.
\end{proof}
We now present the following isomorphism result:
\begin{prop}\label{prop:isomorphism_if_root}
In the setting of Proposition \ref{prop:PSKcerchioConnessioni}, if moreover $\beta$ has a square root, meaning that $\beta=b^2$ for some $b\colon M\to \U(1)$, then the map
\begin{align}
m_b\colon S&\longrightarrow S,\qquad
u\longmapsto u.b(\pi_S(u))=R_{b(\pi_S(u))}(u)
\end{align}
induces a bundle isomorphism preserving connection and deviance, that is
\begin{equation}
\varphi^\beta=m_b^*(\varphi),\qquad\qquad \gamma^{\beta}=\gamma\circ m_b.
\end{equation}

In particular, if $\beta^*\colon \R\cong H^1_{dR}(S^1)\to H^1_{dR}(M)$ is the zero map, then $\beta$ has a square root.
\end{prop}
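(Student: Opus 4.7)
The plan has three moving parts: check the bundle map, verify the two formulas, and then handle the square-root criterion.

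First I would check that $m_b$ is well-defined as a bundle automorphism over the identity of $M$: since $\pi_S(u\cdot a)=\pi_S(u)$ for $a\in\U(1)$, we have $\pi_S\circ m_b=\pi_S$, and $m_{b^{-1}}$ is a two-sided smooth inverse. The deviance equation is then immediate from the equivariance \ref{theo:charPSKPunto2} of Theorem \ref{theo:characterisationPSK}: for $u\in S$,
\begin{equation*}
\gamma(m_b(u))=\gamma\bigl(u\cdot b(\pi_S(u))\bigr)=b(\pi_S(u))^2\gamma(u)=\beta(\pi_S(u))\gamma(u)=\gamma^\beta(u).
\end{equation*}

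The nontrivial identity is $m_b^*\varphi=\varphi^\beta$, and this is the main obstacle because one has to decompose the differential of $m_b$ correctly and carry the $\U(1)$-conventions through. The idea is to factor $m_b=\mathrm{r}\circ(\id_S,b\circ\pi_S)$, where $\mathrm{r}\colon S\times\U(1)\to S$ is the right action, and apply Lemma \ref{lemma:differential_action}: for $X\in T_uS$,
\begin{equation*}
(m_b)_*X=R_{b(\pi_S(u))*}X+\bigl(b^{-1}db(\pi_{S*}X)\bigr)^{\circ},
\end{equation*}
where $b^{-1}db$ is the pullback via $b$ of the Maurer--Cartan form of $\U(1)$, taking values in $\lie{u}(1)$. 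Because $\U(1)$ is abelian, the connection form $\varphi$ is invariant under $R_a$, so $\varphi(R_{b(\pi_S(u))*}X)=\varphi(X)$; on the other hand $\varphi$ sends fundamental vector fields to their generators, so the second term contributes $b^{-1}db(\pi_{S*}X)$. Using $\beta=b^2$, so that $b^{-1}db=\tfrac{1}{2}\beta^{-1}d\beta=\tfrac{d\beta}{2i\beta}\cdot i$, and identifying $\lie{u}(1)\cong\R$ via $i\mapsto 1$ (consistent with the convention $\varphi(I\xi)=1$ used throughout), I obtain
\begin{equation*}
m_b^*\varphi=\varphi+\pi_S^*\!\left(\frac{d\beta}{2i\beta}\right)=\varphi^\beta,
\end{equation*}
matching the connection form of Proposition \ref{prop:PSKcerchioConnessioni}.

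For the final assertion, let $[d\theta]\in H^1_{dR}(S^1)$ be the standard generator, so that $\beta^*(d\theta)=\tfrac{d\beta}{i\beta}$. By hypothesis this class is zero in $H^1_{dR}(M)$, hence $\beta^*(d\theta)=df$ for some $f\in\smooth{M,\R}$. Setting $\phi:=e^{if}\colon M\to\U(1)$, the map $\phi^{-1}\beta$ has vanishing logarithmic derivative, so it is locally constant, i.e.\ constant on each connected component; writing this constant as $e^{i\gamma}$ on a component and taking $b:=e^{i(f+\gamma)/2}$ there, we obtain $b^2=\beta$ globally after assembling across components. This concludes the proof.
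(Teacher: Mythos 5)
Your proposal is correct and follows essentially the same route as the paper: the deviance identity from the degree-$2$ homogeneity of $\gamma$, the connection identity by factoring $m_b=\mathrm{r}\circ(\id_S,b\circ\pi_S)$ and applying Lemma \ref{lemma:differential_action} together with right-invariance of $\varphi$ and $d\beta=2b\,db$, and the square-root criterion by integrating the exact form $\tfrac{1}{i\beta}d\beta$ and absorbing a locally constant factor. Your treatment of the $\lie{u}(1)\cong\R$ identification and of the constants on connected components is if anything slightly more explicit than the paper's, but the argument is the same.
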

\begin{proof}
The preservation of $\gamma$ follows from its $2$-homogeneity, since for all $u\in S$:
\begin{equation}
\gamma\circ m_b (u)=\gamma(u b(\pi_S(u)))
=b(\pi_S(u))^2\gamma(u)
=(\beta\circ \pi_S) \gamma(u)
=\gamma^{\beta}.
\end{equation}

For the connection, we first compute the differential of $m_b$.
Let $r\colon S\times \U(1)\to S$ be the principal right action, then we can see $m_b$ as $r\circ (\id_S\times (b\circ\pi_S))$.
The differential of $(\id_S\times (u\circ\pi_S))$ is $\id_{TS}\times \pi_S^*db$, where $db$ has values in $\lie{u}(1)=i\R$.
Lemma \ref{lemma:differential_action} gives us the differential of the action.
We have
\begin{equation}
((m_b)_*)_u=(R_{b\pi_S(u)})_*+(d_{\pi_S(u)}b)^{\circ}.
\end{equation}
Now let us compute the pullback of $\varphi$, using the fact that $\varphi$ is right invariant and $d\beta=db^2=2bdb$
\begin{align}
m_b^*(\varphi)
&=\varphi\circ (m_b)_*
=\varphi\circ (R_{b\pi_S(u)})_*+\varphi((d_{\pi_S(u)}b)^{\circ})
=R_{b\pi_S(u)}^*\varphi+\frac{1}{ib}d_{\pi_S(u)}b\\
&=\varphi+\frac{1}{i2b^2}d_{\pi_S(u)}\beta
=\varphi+\frac{1}{i2\beta}d_{\pi_S(u)}\beta
=\varphi^{\beta}.
\end{align}

In order to prove the last statement, let $a\colon\U(1)\to\C$ be the standard identification of $\U(1)$ with the unit circle.
Denote by $\psi$ the fundamental form of $\U(1)$, then we can write
\begin{equation}
\psi=\frac{1}{ia}da.
\end{equation}

Now let $\beta\colon M\to \U(1)$, and consider the pullback 
\begin{equation}
\beta^*\psi=\beta^*\bigg(\frac{1}{ia}da\bigg)=\frac{1}{i\beta}d\beta.
\end{equation}
We have $0=\beta^*\colon H^1_{dR}(\U(1))\to H^1_{dR}(M)$, so in particular $\frac{1}{i\beta}d\beta$ is exact.
Let $\lambda\in\smooth{M}$ be such that $d\lambda=\frac{1}{i\beta}d\beta$, then $e^{-i\lambda}\beta$ is a smooth function with image in $\U(1)$ and differential
\begin{equation}
-ie^{i\lambda}\beta d\lambda+e^{i\lambda}d\beta
=-\frac{ie^{i\lambda}\beta}{i\beta} d\beta+e^{i\lambda}d\beta
=-e^{i\lambda} d\beta+e^{i\lambda}d\beta
=0.
\end{equation}
So up to a locally constant function $k$, we have $\beta=ke^{i\lambda}$.
Without loss of generality, we can assume $k=1$ (take $\lambda'=\lambda-i\log(k)$).
Then let $b=e^{\frac{i\lambda}{2}}$ and $b^2=\beta$.
\end{proof}
\begin{rmk}\label{rmk:dR0_unique_structure}
In the family of projective special K\"{a}hler structures constructed in Proposition \ref{prop:PSKcerchioConnessioni}, if $H^1_{dR}(M)=0$, then there is a unique projective special K\"{a}hler structure on $M$ up to a natural notion of isomorphism.
\end{rmk}

Even when $H^1_{dR}(M)\neq 0$, we can say exactly when a function has a global square root by considering the following functional for all $p\in M$:
\begin{equation}
F_{\beta,p}\colon \pi_1(M,p)\longrightarrow \R,\qquad \sigma\longmapsto \frac{1}{2\pi}\int_\sigma \frac{1}{i\beta}d\beta.
\end{equation}
Notice that, in the notation above,
\begin{equation}\label{eq:pullback}
\frac{1}{2\pi}\int_\sigma \frac{1}{i\beta}d\beta=\frac{1}{2\pi}\int_{\beta\circ\sigma} \frac{1}{ia}da=\frac{1}{2\pi}\int_{\beta\circ\sigma} \psi,
\end{equation}
so, $F$ has image in $\Z$.
\begin{lemma}\label{lemma:cohomological_condition_square}
Let $M$ be a smooth manifold and $\beta\colon M\to \U(1)$, then there exists $b\colon M\to\U(1)$ such that $\beta=b^2$ if and only if for all $p\in M$, the functional
\begin{align*}
[F_{\beta,p}]\colon \pi_1(M,p)\longrightarrow \Z_2,\qquad \sigma\longmapsto \frac{1}{2\pi}\int_\sigma \frac{1}{i\beta}d\beta\ \textrm{mod}\ 2
\end{align*}
is zero.
Explicitly, given $y_p\in\U(1)$ such that $y_p^2=\beta(p)$, then for all $q\in M$ in the same connected component of $p$,
\begin{equation}\label{eq:root_beta}
b(q)=y_p \exp\left(\frac{1}{2} \int_\sigma \frac{1}{\beta}d\beta\right)
\end{equation}
for all continuous $\sigma\colon [0,1]\to M$ such that $\sigma(0)=p$ and $\sigma(1)=q$.
\end{lemma}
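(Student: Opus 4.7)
The plan is to prove both directions by leveraging the identification $\frac{1}{2\pi i}\int_{\beta\circ\sigma}\frac{da}{a}$ with the winding number of the loop $\beta\circ\sigma$ around $\U(1)$, together with a path-integral construction of the square root.

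For the forward implication, I would start by assuming $\beta=b^2$ for some smooth $b\colon M\to\U(1)$. Then a direct computation yields
\begin{equation}
\frac{1}{i\beta}d\beta=\frac{2\,b\,db}{ib^{2}}=\frac{2}{ib}\,db,
\end{equation}
so by \eqref{eq:pullback},
\begin{equation}
F_{\beta,p}(\sigma)=\frac{1}{2\pi}\int_{\sigma}\frac{1}{i\beta}d\beta
=\frac{2}{2\pi}\int_{b\circ\sigma}\psi \in 2\Z,
\end{equation}
since $\frac{1}{2\pi}\int_{b\circ\sigma}\psi$ is the winding number of a loop in $\U(1)$. Hence $[F_{\beta,p}]=0$.

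For the converse, I would fix $p\in M$ and $y_p\in\U(1)$ with $y_p^2=\beta(p)$, and try to use \eqref{eq:root_beta} as the \emph{definition} of $b$ on the connected component of $p$. The key point to check is well-definedness: given two piecewise smooth paths $\sigma_1,\sigma_2$ from $p$ to $q$, the concatenation $\sigma:=\sigma_1*\sigma_2^{-1}$ is a loop at $p$, and
\begin{equation}
\tfrac{1}{2}\!\int_{\sigma_1}\!\tfrac{d\beta}{\beta}-\tfrac{1}{2}\!\int_{\sigma_2}\!\tfrac{d\beta}{\beta}
=\tfrac{i}{2}\!\int_{\sigma}\!\tfrac{1}{i\beta}d\beta
=i\pi F_{\beta,p}(\sigma).
\end{equation}
By the hypothesis $[F_{\beta,p}]=0$ this integer is even, so $\exp$ of the difference equals $1$ and the two path-integral prescriptions agree. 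Smoothness of $b$ then follows by choosing, in a neighbourhood of any $q$, a smooth path and differentiating under the integral: $\frac{db}{b}=\tfrac{1}{2}\frac{d\beta}{\beta}$.

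Once $b$ is defined, it remains to check that $b$ takes values in $\U(1)$ and that $b^2=\beta$. The first follows because $\beta\overline{\beta}=1$ implies $\frac{d\beta}{\beta}\in i\R$, so the exponent is purely imaginary and $|b|=|y_p|=1$. For the second, I would consider the smooth map $b^2/\beta\colon M\to\C^{*}$; its logarithmic derivative is
\begin{equation}
\frac{d(b^{2})}{b^{2}}-\frac{d\beta}{\beta}=2\cdot\tfrac{1}{2}\frac{d\beta}{\beta}-\frac{d\beta}{\beta}=0,
\end{equation}
so $b^2/\beta$ is locally constant; since it equals $y_p^2/\beta(p)=1$ at $p$, it is identically $1$ on the connected component of $p$. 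Repeating the construction on each connected component produces the desired global $b$.

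The main obstacle is the well-definedness step: ensuring the parity condition on $F_{\beta,p}$ is exactly what is needed so that the prescription \eqref{eq:root_beta} is independent of the chosen path. The rest—smoothness, the algebraic identity $b^2=\beta$, and the range of $b$—is a routine local computation once the path integral makes sense.
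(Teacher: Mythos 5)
Your proposal is correct and follows essentially the same route as the paper: the forward direction via the winding number of $b\circ\sigma$, and the converse by taking \eqref{eq:root_beta} as the definition of $b$, with path-independence secured exactly by the parity hypothesis on $F_{\beta,p}$. The only (harmless) variation is in verifying $b^2=\beta$, where you use the vanishing logarithmic derivative of $b^2/\beta$ and local constancy, whereas the paper computes $\exp\bigl(\int_\sigma \frac{1}{\beta}d\beta\bigr)=\beta(q)/\beta(p)$ directly via local logarithms along a partition of the path.
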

\begin{proof}
If $\beta=b^2$ for some $b\colon M\to \U(1)$, then for all $p\in M$ and $\sigma\in\pi_1(M,p)$,
\begin{equation}
F_{\beta,p}(\sigma)=\frac{1}{2\pi}\int_\sigma \frac{2}{ib}db=2\left(\frac{1}{2\pi}\int_{b\circ\sigma} \psi\right).
\end{equation}
Since $b\circ \sigma$ is a loop, $F_{\beta,p}(\sigma)$ is even, so $[F_{\beta,p}]=0$.

Conversely, choose a point in every connected component of $M$ and define $b$ by glueing functions defined as in \eqref{eq:root_beta}.
We can verify $\beta=b^2$ on each connected component, so let $p$ be the chosen point in said component.
Connected components on manifolds are also path connected components, so for all $q$ in the same connected component, there exists a smooth $\sigma\colon[0,1]\to M$ such that $\sigma(0)=p$ and $\sigma(1)=q$.
The value $b(q)$ is independent from the path $\sigma$ chosen, in fact if we pick another such $\sigma'\colon [0,1]\to M$, then the composition of paths $(\sigma')^{-1} \ast \sigma\in\pi_1(M,p)$ is a loop, and thus
\begin{align*}
\int_\sigma \frac{1}{\beta}d\beta-\int_{\sigma'} \frac{1}{\beta}d\beta
&=\int_\sigma \frac{1}{\beta}d\beta+\int_{(\sigma')^{-1}} \frac{1}{\beta}d\beta
=2\pi i\bigg(\frac{1}{2\pi}\int_{(\sigma')^{-1}\ast \sigma} \frac{1}{i\beta}d\beta\bigg)
=4\pi i k
\end{align*}
for some $k\in\Z$.
It follows that
\begin{equation}
y_p \exp\left(\frac{1}{2} \int_\sigma \frac{1}{\beta}d\beta\right)
=y_p \exp\left(\frac{1}{2} \int_{\sigma'} \frac{1}{\beta}d\beta+2\pi i k\right)=
y_p \exp\left(\frac{1}{2} \int_{\sigma'} \frac{1}{\beta}d\beta\right)
\end{equation}
We can now compute
\begin{align*}
b^2(q)
&=y_p^2 \left(\exp\left(\frac{1}{2} \int_\sigma \frac{1}{\beta}d\beta\right)\right)^2
=\beta(p) \exp\left(\int_\sigma \frac{1}{\beta}d\beta\right).
\end{align*}
Since locally $\frac{1}{\beta}d\beta=d\log(\beta)$, up to picking a suitable partition of $[0,1]$ we have $\exp\left(\int_\sigma \frac{1}{\beta}d\beta\right)=\beta(q)/\beta(p)$ so $b^2(q)=\beta(q)$.
\end{proof}
We deduce
\begin{prop}
Let $M$ be a smooth manifold and $\beta\colon M\to\U(1)$, then the following are equivalent:
\begin{enumerate}
\item\label{prop:cohomological_char_root:1} there exists $b\colon M\to\U(1) $ such that $\beta=b^2$;
\item\label{prop:cohomological_char_root:2} $[F_{\beta,p}]=0$ for all $p\in M$;
\item\label{prop:cohomological_char_root:3} $[F_{\beta,p_k}](\sigma_k)=0$ for a set of loops $\sigma_k\in \pi_1(M,p_k)$ corresponding to a set of generators of $H_1(M,\Z)$;
\item\label{prop:cohomological_char_root:4} $[F_{\beta,p_k}](\sigma_k)=0$ for a set of loops $\sigma_k\in \pi_1(M,p_k)$ corresponding to a set of generators of $H_1(M,\Z_2)=H_1(M,\Z)\otimes_{\Z}\Z_2$;
\item\label{prop:cohomological_char_root:5} the pullback $\beta^*\colon \Z_2\cong H^1(\U(1),\Z_2)\to H^1(M,\Z_2)$ is the zero map.
\end{enumerate}
\end{prop}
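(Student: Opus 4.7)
The plan is to use Lemma \ref{lemma:cohomological_condition_square} as the engine for the key equivalence \ref{prop:cohomological_char_root:1}$\Leftrightarrow$\ref{prop:cohomological_char_root:2}, and then obtain the remaining equivalences from standard homological arguments about when a $\Z_2$-valued homomorphism on $\pi_1$ vanishes. The overall strategy is to show that each of \ref{prop:cohomological_char_root:2}, \ref{prop:cohomological_char_root:3}, \ref{prop:cohomological_char_root:4}, \ref{prop:cohomological_char_root:5} is a rephrasing of the single statement ``the induced homomorphism $H_1(M_p,\Z_2)\to \Z_2$ vanishes for every connected component $M_p$''.

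First I would observe that $F_{\beta,p}\colon \pi_1(M,p)\to\Z$ is a group homomorphism: by \eqref{eq:pullback} it equals $\sigma\mapsto \tfrac{1}{2\pi}\int_{\beta\circ\sigma}\psi$, which is additive under concatenation. Its mod-$2$ reduction $[F_{\beta,p}]$ therefore factors through the abelianisation, producing a homomorphism $H_1(M_p,\Z)\to\Z_2$, and since the target is $\Z_2$ it factors further through $H_1(M_p,\Z_2)\cong H_1(M_p,\Z)\otimes\Z_2$ by the universal coefficient theorem. Vanishing of a homomorphism on a generating set is equivalent to vanishing on the whole group; moreover, a set of generators of $H_1(M_p,\Z)$ maps to a set of generators of $H_1(M_p,\Z_2)$ under reduction mod $2$. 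This yields the chain of equivalences \ref{prop:cohomological_char_root:2}$\Leftrightarrow$\ref{prop:cohomological_char_root:3}$\Leftrightarrow$\ref{prop:cohomological_char_root:4} once we interpret them componentwise (which is harmless, since $F_{\beta,p}$ only sees the component of $p$, and this is how Lemma \ref{lemma:cohomological_condition_square} is already stated).

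For \ref{prop:cohomological_char_root:4}$\Leftrightarrow$\ref{prop:cohomological_char_root:5} I would identify the pullback $\beta^*$ on $\Z_2$-cohomology with the family $\{[F_{\beta,p}]\}_p$. Under the natural isomorphism $H^1(X,\Z_2)\cong \Hom(H_1(X,\Z),\Z_2)$ furnished by universal coefficients (the $\mathrm{Ext}$ term vanishes because $H_0$ is free), a generator of $H^1(\U(1),\Z_2)\cong\Z_2$ is represented by the winding-number cochain $\tau\mapsto \tfrac{1}{2\pi}\int_\tau\psi\bmod 2$. Pulling this back along $\beta$ yields $\sigma\mapsto \tfrac{1}{2\pi}\int_{\beta\circ\sigma}\psi\bmod 2$, which by \eqref{eq:pullback} coincides with $[F_{\beta,p}]$. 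Hence $\beta^*=0$ on $H^1(-,\Z_2)$ if and only if the homomorphism $[F_{\beta,p}]$ is trivial on every component, establishing \ref{prop:cohomological_char_root:5}$\Leftrightarrow$\ref{prop:cohomological_char_root:2}.

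The equivalence \ref{prop:cohomological_char_root:1}$\Leftrightarrow$\ref{prop:cohomological_char_root:2} is already contained in Lemma \ref{lemma:cohomological_condition_square}, so the proof consists in closing the cycle. I do not foresee a genuine obstacle: the only mild care needed is to handle the basepoint dependence in a disconnected manifold, but this is automatic once one phrases everything componentwise; the main ``work'' is really the bookkeeping that identifies the pullback of the standard $\Z_2$-generator of $H^1(\U(1),\Z_2)$ with the functional $[F_{\beta,p}]$ through \eqref{eq:pullback}.
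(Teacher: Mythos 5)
Your proposal is correct and follows essentially the same route as the paper: the equivalence \ref{prop:cohomological_char_root:1}$\Leftrightarrow$\ref{prop:cohomological_char_root:2} is delegated to Lemma \ref{lemma:cohomological_condition_square}, the equivalences \ref{prop:cohomological_char_root:2}$\Leftrightarrow$\ref{prop:cohomological_char_root:3}$\Leftrightarrow$\ref{prop:cohomological_char_root:4} come from factoring the homomorphism $[F_{\beta,p}]$ through the abelianisation and through $H_1(\cdot,\Z)\otimes_\Z\Z_2$ (treated componentwise), and \ref{prop:cohomological_char_root:5} is handled by identifying, via the universal coefficient theorem and \eqref{eq:pullback}, the pullback of the generator of $H^1(\U(1),\Z_2)$ with the family $\{[F_{\beta,p}]\}$. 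The only cosmetic difference is that you close the cycle through \ref{prop:cohomological_char_root:2} rather than \ref{prop:cohomological_char_root:3}, which changes nothing of substance.
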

\begin{proof}
The equivalence \ref{prop:cohomological_char_root:1}$\Leftrightarrow$\ref{prop:cohomological_char_root:2} is Lemma \ref{lemma:cohomological_condition_square}.

For \ref{prop:cohomological_char_root:2}$\Leftrightarrow$\ref{prop:cohomological_char_root:3}, suppose at first that $M$ is connected and let $p\in M$.
The functional $[F_{\beta,p}]\colon \pi_1(M,p)\to \Z_2$ is a group homomorphism and by Hurewicz theorem, $H_1(M,\Z)$ is the abelianisation of $\pi_1(M,p)$.
Since $\Z_2$ is an abelian group, there are natural isomorphisms
\begin{equation}
\Hom(\pi(M,p),\Z_2)\cong \Hom(H_1(M,\Z),\Z_2)= \Hom_{\Z}(H_1(M,\Z),\Z_2),
\end{equation}
and thus, there is a canonical factorization of $[F_{\beta,p}]$ as an abelian group homomorphism (i.e.\ $\Z$-linear map) $H_1(M,\Z)\to\Z_2$ which is the zero map if and only if $[F_{\beta,p}]$ is zero.
In particular this proves \ref{prop:cohomological_char_root:2}$\Leftrightarrow$\ref{prop:cohomological_char_root:3}.
In general, $M=\coprod_{i\in I} M_i$ with $M_i$ connected for all $i\in I$, so $H_1(M,\Z)=\bigoplus_{i\in I}H_1(M_i,\Z)$ and hence
\begin{equation}
\Hom_{\Z}(H_1(M,\Z),\Z_2)
\cong \prod_{i\in I}\Hom_{\Z}(H_1(M_i,\Z),\Z_2)
\cong \prod_{i\in I}\Hom(\pi_1(M_i,p_i),\Z_2).
\end{equation}
Thus, \ref{prop:cohomological_char_root:2} holds if and only if $[F_{\beta,p_i}]=0$ for all $i\in I$, and by the previous isomorphism, this happens if and only if \ref{prop:cohomological_char_root:3}.

\ref{prop:cohomological_char_root:3}$\Leftrightarrow$\ref{prop:cohomological_char_root:4} follows from properties of tensor products and linear maps, in fact, given a $\Z$-module $A$, a $\Z$-linear map $A\to\Z_2$ vanishes on $2A$, and thus factors as a map $A/2A\to\Z_2$.
Moreover, $A/2A\cong A\otimes_{\Z}\Z_2$ (seen by applying the right-exact functor $A\otimes_{\Z}\cdot$ to the short exact sequence $0\rightarrow \Z\xrightarrow{2\cdot} \Z\rightarrow\Z_2\rightarrow 0$).
From these properties along with the homological universal coefficients theorem, we find the following natural isomorphisms
\begin{equation}
\Hom_{\Z}(H_1(M,\Z),\Z_2)\cong \Hom_{\Z_2}(H_1(M,\Z)\otimes_{\Z} \Z_2,\Z_2)\cong\Hom_{\Z_2}(H_1(M,\Z_2),\Z_2),
\end{equation}
that show the equivalence \ref{prop:cohomological_char_root:3}$\Leftrightarrow$\ref{prop:cohomological_char_root:4}.

Finally, we prove \ref{prop:cohomological_char_root:3}$\Leftrightarrow$\ref{prop:cohomological_char_root:5} by the cohomological universal coefficient theorem, which gives the natural isomorphism $\Hom(H_1(M,\Z),\Z_2)\cong H^1(M,\Z_2)$.
In particular, the class in $H^1(M,\Z_2)$ corresponding to $[F_{\beta,p}]$, is by construction the pullback along $\beta$ of the fundamental form on $\U(1)$ (see \eqref{eq:pullback}).
Since $H^1(\U(1),\Z_2)$ is generated by the  integral functional associated to the fundamental form, this image is zero if and only if the whole $\beta^*$ is the zero map.
\end{proof}
This proposition clarifies when two structures built as in Proposition \ref{prop:PSKcerchioConnessioni} are isomorphic as in Proposition \ref{prop:isomorphism_if_root}.
Since $\beta^*$ always vanishes on torsion elements, $H^1_{dR}(M)=0$ is a sufficient condition for not only $\beta^*\colon H^1_{dR}(\U(1))\to H^1_{dR}(M)$ being zero, but also for $\beta^*\colon H^1(\U(1),\Z_2)\to H^1(M,\Z_2)$ being zero.
However, the condition $\beta^*=0$ on the cohomology with coefficients in $\Z_2$ is in general strictly weaker than the same condition in de Rham cohomology.
\section{Complex hyperbolic n-space}
In this section we are going to describe a special family of projective special K\"{a}hler manifolds, which can be thought of as the simplest possible model in a given dimension.

Let $\C^{n,1}$ be the Hermitian space $\C^{n+1}$ endowed with the Hermitian form
\begin{equation}
\langle z,w\rangle =\overline{z_1} w_1+\dots+\overline{z_n} w_n-\overline{z_{n+1}} w_{n+1}.
\end{equation}
It is a complex vector space, so it makes sense to consider the projective space associated to it, that is $\Pj(\C^{n,1})=(\C^{n,1}\setminus \{0\})/\C^*$ with the quotient topology and the canonical differentiable structure, where $\C^*$ acts by scalar multiplication.
We will denote the quotient class corresponding to an element $z\in\C^{n,1}$ by $[z]$.
We can define the following open subset:
\begin{equation}
\Hyp_{\C}^n:=\{[v]\in\Pj(\C^{n,1})|\langle v,v\rangle<0\}.
\end{equation}
Let $v=(v_1,\dots,v_{n+1})\in\C^{n,1}$, notice that if $[v]\in\Hyp_{\C}^n$, then $|v_1|^2+\dots+|v_n|^2-|v_{n+1}|^2<0$ so $|v_{n+1}|^2>|v_1|^2+\dots+|v_n|^2\ge 0$ which implies $v_{n+1}\neq 0$.
We thus have a global differentiable chart $\Hyp_{\C}^n\to\C^n$ by restricting the projective chart $[v]\mapsto \Big(\frac{v_1}{v_{n+1}},\dots,\frac{v_n}{v_{n+1}}\Big)$.
\begin{rmk}\label{rmk:HypSpContractible}
The inverse of this chart $\C^n\to\Pj(\C^{n,1})$ maps $z=(z_1,\dots,z_n)\in\C^n$ to $[(z_1,\dots,z_n,1)]$, which is in $\Hyp_{\C}^n$ if and only if $\norm{z}^2<1$.
We have proven that $\Hyp_{\C}^n$ is diffeomorphic to the complex unit ball and thus in particular it is contractible.
\end{rmk}

Consider now the Lie group $\SU(n,1)$ of the matrices with determinant $1$ that are unitary with respect to the Hermitian metric on $\C^{n,1}$.
We define a left action of $\SU(n,1)$ on $\Hyp_{\C}$ such that $A[v]=[Av]$; it is well defined by linearity and invertibility and it is smooth.

This action is also transitive, in fact given $[v],[w]\in\Hyp_{\C}^n$, without loss of generality, we can assume that $\langle v,v\rangle=-1=\langle w,w\rangle$.
Because of this, we can always complete $v$ and $w$ to an orthonormal basis with respect to the Hermitian product, obtaining $\{v_1,\dots,v_n,v\}$ and $\{w_1,\dots,w_n,w\}$.
Consider the following block matrices $V=(v_1|\dots|v_n|v)$ and $W=(w_1|\dots|w_n|w)$ which, up to permuting two of the first $n$-columns, belong to $\SU(n,1)$.
The matrix $A=WV^{-1}\in\SU(n,1)$ maps $v$ in $w$ and thus $[v]$ in $[w]$.

We shall now compute the stabiliser of the last element of the canonical basis $e_{n+1}$ for this action, that is, the set of matrices $A\in\SU(n,1)$ such that $Ae_{n+1}=\lambda e_{n+1}$ for $\lambda\in\C$.
Observe that $\lambda\in \U(1)$ since
\begin{equation}
-1=\langle e_{n+1}, e_{n+1}\rangle=\langle Ae_{n+1}, Ae_{n+1}\rangle=\langle \lambda e_{n+1}, \lambda e_{n+1}\rangle=-|\lambda|^2.
\end{equation}
Moreover, the last column of $A$ is $A_{n+1}=Ae_{n+1}=\lambda e_{n+1}$.
This forces $A$ to assume the form
\begin{equation}
\begin{pmatrix}
B&0\\
0&\lambda
\end{pmatrix}.
\end{equation}
Since $A$ belongs to $\SU(n,1)$, we must infer that $B$ belongs to $\U(n)$ and $\lambda=\det(B)^{-1}$.
The stabiliser of $e_{n+1}$ is thus $\spc(\U(n)\U(1))$, which is isomorphic to $\U(n)$.
We deduce that $\Hyp_{\C}^n$ is a symmetric space $\SU(n,1)/\spc(\U(n)\U(1))$.

We will adopt the nomenclature of \cite{Goldman} for the following
\begin{defi}
We call the K\"{a}hler manifold $\Hyp_{\C}^n$ of complex dimension $n$ the \emph{complex hyperbolic $n$-space}.
\end{defi}

There is a natural K\"{a}hler structure on $\Hyp_{\C}^n$ coming from its representation as a symmetric space $G/H$.

Let $\lie{g}=\lie{h}+\lie{m}$ be the canonical decomposition, in particular
\begin{equation}
\lie{m}:=\bigg\{\begin{pmatrix}
0&x\\
x^\star&0
\end{pmatrix}\big| x\in\C^n\bigg\}.
\end{equation}

On a symmetric space, there is a one-to-one correspondence between Riemannian metrics and $\Ad(H)$-invariant positive definite symmetric bilinear forms on $\lie{m}$ (See \cite[II, Corollary 3.2, p.\ 200]{KN}).
Let $\theta\colon T_{[e_{n+1}]}\Hyp_{\C}^n\cong \lie{m}\to\C^n$ be the identification mapping to $x$ the tangent vector corresponding to $\begin{pmatrix}
0&x\\
x^\star&0
\end{pmatrix}$.
With this identification, for $A\in\U(n)$ we see that the $\Ad(A)$-action on $\lie{m}$ corresponds on $\C^n$ to the $x\mapsto\det(A)Ax$.

The metric is induced by the Killing form on $\lie{su}(n,1)$ given by (\cite{Helgason})
\begin{equation}
B(X,Y)=2(n+1)\tr(XY),\quad \forall X,Y\in\lie{u}(n,1).
\end{equation}
We restrict the Killing form to $\lie{m}$ in order to define an $\Ad(H)$-invariant bilinear form, that is, given $x,y\in \C^n$, if $X,Y$ are the corresponding tangent vectors,
\begin{align*}
B(X,Y)
&=2(n+1)\tr\left(\begin{pmatrix}
0&x\\
x^\star&0
\end{pmatrix}\begin{pmatrix}
0&y\\
y^\star&0
\end{pmatrix}\right)
=2(n+1)\tr\begin{pmatrix}
x y^\star&0\\
0&x^\star y
\end{pmatrix}\\
&=2(n+1)\Re(x^\star y)
=2(n+1)(\theta^\star \theta) (X,Y).
\end{align*}
We define $g_{[e_{n+1}]}:=\theta^\star \theta$, which is $\Ad(\U(n))$-invariant, so it extends to a global Riemannian metric $g$.
By using the same idea, we can also define an almost complex structure $I$ on $\lie{m}$ as the map corresponding to the scalar multiplication by $i$ on $\C^n$.
This structure is compatible with the metric and it is $\Ad(\U(n))$-invariant, so it defines a K\"{a}hler structure (see \cite[II, Proposition 9.3, p.\ 260]{KN}).
The K\"{a}hler form $\omega$ is then:
\begin{align*}
\omega(X,Y)
=g(IX,Y)
=\Re(x^\star i^\star y)
=\Im(x^\star y)
=\Im(\theta^\star \otimes \theta)(X,Y).
\end{align*}

\begin{prop}
The manifold $\Hyp_{\C}^n$ has curvature tensor $-\Omega_{\Pj_{\C}^n}$ and is projective special K\"{a}hler for all $n\ge 1$ with constant zero deviance.
\end{prop}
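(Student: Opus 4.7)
The plan is to apply Corollary \ref{cor:charPSKesatte} with the candidate deviance $\eta \equiv 0$. Since $\Hyp_{\C}^n$ is contractible by Remark \ref{rmk:HypSpContractible}, we have $H^2(\Hyp_{\C}^n,\Z)=0$ and the corollary applies. The differential condition \ref{eq:condDiffCor} is automatic for $\eta=0$ (once one picks any primitive $\lambda$ of $\omega$, which exists by the Poincaré lemma on the contractible ball). Hence the whole statement reduces to verifying the curvature identity $\Omega^{LC}=-\Omega_{\Pj_{\C}^n}$, which is simultaneously what is meant by ``curvature tensor $-\Omega_{\Pj_{\C}^n}$'' and what makes condition \ref{eq:condCurvCor} hold.

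To compute $\Omega^{LC}$, I would exploit the symmetric space structure. For $G/H=\SU(n,1)/\spc(\U(n)\U(1))$ with canonical decomposition $\lie{g}=\lie{h}+\lie{m}$, the Riemann tensor at the origin $[e_{n+1}]$ is given by $R(X,Y)Z=-[[X,Y],Z]_{\lie{m}}$ for $X,Y,Z\in\lie{m}$. Using the identification $\theta\colon\lie{m}\to\C^n$ of the excerpt, a direct matrix computation yields
\begin{equation}
[X,Y]=\begin{pmatrix} xy^{\star}-yx^{\star} & 0 \\ 0 & x^{\star}y-y^{\star}x\end{pmatrix},
\end{equation}
and then $[[X,Y],Z]$ corresponds under $\theta$ to $(y^{\star}z)x-(x^{\star}z)y-(x^{\star}y-y^{\star}x)z$. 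Recognising that $x^{\star}y=h(x,y)$, $y^{\star}x=\overline{h(x,y)}$ and $x^{\star}y-y^{\star}x=2i\,\omega(x,y)$ (the Fubini-Study Kähler form at the origin), the resulting endomorphism-valued $2$-form matches exactly the expression written coordinate-free in Remark \ref{rmk:curvatureProj} with a global sign flipped: this is the general duality between a symmetric space of compact type and its non-compact dual. Translating back through the coframe $\theta$ and invoking homogeneity (so that the identity extends from the origin to the whole of $\Hyp_{\C}^n$ by the $\SU(n,1)$-invariance of both sides), we obtain $\Omega^{LC}=-\Omega_{\Pj_{\C}^n}$ as claimed.

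Feeding $\eta\equiv 0$ into Corollary \ref{cor:charPSKesatte} then yields the projective special Kähler structure on $\Hyp_{\C}^n$ with constant zero deviance. The only real technical point is the bookkeeping in the curvature computation: one must carefully match the conventions used in Remark \ref{rmk:curvatureProj} (in particular the splitting into the $\theta^k\wedge\overline{\theta^h}$ term and the $\overline{\theta^k}\wedge\theta^k$ trace term) with the output of the symmetric space formula. I expect this identification to be the main obstacle, since the rest is structural: contractibility forces the hypotheses of Corollary \ref{cor:charPSKesatte} and the vanishing of $\eta$ trivialises its differential condition.
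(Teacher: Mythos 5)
Your proposal follows essentially the same route as the paper: both invoke contractibility (Remark \ref{rmk:HypSpContractible}) to apply Corollary \ref{cor:charPSKesatte} with $\eta\equiv 0$, note that \ref{eq:condDiffCor} is then trivial, and reduce everything to the identity $\Omega^{LC}=-\Omega_{\Pj_{\C}^n}$. The only difference is that the paper dismisses that curvature identity as a standard computation, whereas you sketch it explicitly via the symmetric-space formula $R(X,Y)Z=-[[X,Y],Z]_{\lie{m}}$, which is a correct way to fill in that step.
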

\begin{proof}
The computation of the curvature tensor is standard.
By Remark \ref{rmk:HypSpContractible}, we know that $\Hyp_{\C}^n$ is contractible, allowing us to apply Corollary \ref{cor:charPSKesatte}.
If we choose as tensor $\eta$ of type $\sharp_2 S_{3,0}M$ the $0$-section, then the differential condition \ref{eq:condDiffCor} is trivially satisfied, while condition \ref{eq:condCurvCor} follows from the computation of the curvature tensor.
\end{proof}

Notice that the deviance measures the difference of a projective special K\"{a}hler manifold of dimension $2n$ from being the complex hyperbolic $n$-space.
More precisely, we have
\begin{prop}
At a point $p$ of a projective special K\"{a}hler manifold $M$ with intrinsic deviance $\gamma\colon S\to \sharp_2 S_{3,0}M$, the curvature tensor $\Omega_M$ coincides with the one of $\Hyp_{\C}^n$ exactly in those points $p$ where $\gamma|_p$ vanishes.
\end{prop}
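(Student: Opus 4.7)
The plan is to reduce everything to the curvature equation from Theorem \ref{theo:characterisationPSK}. By condition \ref{theo:charpsk:CurvCond}, for any local section $s\colon U\to S$ around $p$ with associated deviance $\eta=\gamma\circ s$, one has
\[
\Omega^{LC}_M + \Omega_{\Pj^n_{\C}} + [\eta\wedge\overline{\eta}] = 0
\]
on $U$. The previous proposition identifies the curvature tensor of $\Hyp^n_{\C}$ as $-\Omega_{\Pj^n_{\C}}$, so $\Omega_M|_p$ agrees algebraically with the curvature of $\Hyp^n_{\C}$ if and only if $[\eta\wedge\overline{\eta}]|_p = 0$. The whole statement therefore reduces to showing that $[\eta\wedge\overline{\eta}]|_p=0$ is equivalent to the vanishing of $\gamma$ on the fibre $\pi_S^{-1}(p)$.

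First I would dispatch the equivalence $\eta|_p=0 \Leftrightarrow \gamma|_{\pi_S^{-1}(p)}=0$, which follows immediately from Proposition \ref{prop:FibratoPrinC}: any local section $s$ gives $\eta = \gamma\circ s$, and the $2$-homogeneity $\gamma(ua)=a^2\gamma(u)$ propagates vanishing along the entire fibre. The only remaining point is the equivalence $[\eta\wedge\overline{\eta}]|_p=0 \Leftrightarrow \eta|_p=0$, where the reverse direction is trivial.

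For the forward direction I would take the scalar trace of the vanishing tensor. The computation carried out in the derivation of equation \eqref{eq:ScalCurv} shows that the scalar component of $[\eta\wedge\overline{\eta}]$ equals $-\tfrac{2}{n}\|\eta\|^2_h$. If $[\eta\wedge\overline{\eta}]|_p=0$, then in particular its scalar trace vanishes at $p$, forcing $\|\eta\|^2_h|_p=0$. Exploiting the total symmetry of the coefficients $\eta^j_{k,h}$ (noted just before Proposition \ref{prop:ThetatildeWrtTheta}) and the horizontality from Lemma \ref{lemma:etaHorizontal}, we can write $\|\eta\|^2_h = \sum_{j,k,h}|\eta^j_{k,h}|^2$ in any local unitary coframe with indices running from $1$ to $n$, an expression that is positive definite in the coefficients. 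Hence $\eta|_p=0$.

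Assembling these three ingredients—the curvature identity, the identification of $\Hyp^n_{\C}$'s curvature, and the fibrewise reformulation of $\gamma$—completes the proof. The one potentially delicate point is ensuring that the positivity of $\|\eta\|^2_h$ is genuine rather than blurred by signature issues; this is guaranteed by Lemma \ref{lemma:etaHorizontal}, which confines $\eta$ to the horizontal (positive definite) part of $\widetilde g$, so no indefinite cancellation among components can occur.
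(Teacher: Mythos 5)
Your proposal is correct and follows essentially the same route as the paper: one direction comes from the curvature identity \ref{theo:charpsk:CurvCond}, and the converse is obtained by passing to the scalar trace, which is exactly the computation behind \eqref{eq:ScalCurv}, so that the vanishing of $\norm{\eta}_h^2$ (positive definite since $g$ on $M$ is Riemannian and the coefficients are totally symmetric) forces $\eta|_p=0$ and hence, by the $2$-homogeneity of $\gamma$, the vanishing of $\gamma$ on the whole fibre over $p$.
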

In particular, for any section of $S$ defined on an open neighbourhood of $p$, the corresponding local deviance vanishes at $p$ whenever the two curvatures coincide.
\begin{proof}
One direction follows from condition \ref{theo:charpsk:CurvCond}.
For the opposite one, if $\Omega_M=\Omega_{\Hyp_{\C}^{n}}=-\Omega_{\Pj_{\C}^n}$, then $\scal_M=-2(n+1)$ and the intrinsic deviance vanishes as the norm of any local deviance vanishes by \eqref{eq:ScalCurv}.
\end{proof}
We can also prove
\begin{prop}\label{prop:uniquenessHyp}
The only complete connected and simply connected projective special K\"{a}hler manifold of dimension $2n$ with zero deviance is $\Hyp_{\C}^n$.
\end{prop}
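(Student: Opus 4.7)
The plan is to use the characterisation theorem to translate the hypothesis ``zero deviance'' into a statement about the Riemannian curvature of $M$, and then to apply the classical rigidity result for Kähler space forms.

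First I would observe that if the intrinsic deviance $\gamma$ vanishes identically, then for any local section $s\colon U\to S$ the corresponding local deviance $\eta = \gamma \circ s$ also vanishes. Substituting $\eta = 0$ into the curvature condition \ref{theo:charpsk:CurvCond} of Theorem \ref{theo:characterisationPSK} gives
\begin{equation*}
\Omega^{LC}_M = -\Omega_{\Pj_{\C}^n}.
\end{equation*}
By the previous proposition, together with the explicit formula for $\Omega_{\Pj^n_{\C}}$ recalled in Remark \ref{rmk:curvatureProj}, the right-hand side is precisely the curvature tensor of $\Hyp_{\C}^n$. In particular $M$ becomes a Kähler manifold of constant negative holomorphic sectional curvature, with the specific value determined by the normalisation of $\Omega_{\Pj^n_{\C}}$.

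The second step is to invoke the classical rigidity theorem for Kähler space forms: any complete, connected, simply connected Kähler manifold of constant negative holomorphic sectional curvature $c$ is Kähler-isometric to $\Hyp_{\C}^n$ (with the value of $c$ fixing the scaling); see e.g.\ Kobayashi-Nomizu Vol.\ II, Theorem 7.9. Combined with the previous step, this yields a Kähler-isometry $M \cong \Hyp_{\C}^n$. This invocation of the classification is the main (indeed only) non-trivial input of the proof.

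Finally, to upgrade the Kähler-isometry to a genuine projective special Kähler isomorphism, I would note that $M$ is simply connected and $\Hyp_{\C}^n$ is contractible (Remark \ref{rmk:HypSpContractible}), so both have vanishing first de Rham cohomology; hence by Remark \ref{rmk:dR0_unique_structure} the PSK structure on each is determined up to isomorphism by the underlying Kähler structure, and the Kähler-isometry therefore promotes to a PSK-isomorphism. The main obstacle is really just the citation of the complex space form classification; every other step is an immediate consequence of the machinery already developed in the paper.
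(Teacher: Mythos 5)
Your proposal is correct and follows the same skeleton as the paper's argument: zero deviance turns the curvature condition \ref{theo:charpsk:CurvCond} into $\Omega^{LC}=-\Omega_{\Pj^n_{\C}}$, a classical rigidity theorem then yields a holomorphic isometry onto $\Hyp_{\C}^n$, and the vanishing of both deviances upgrades it to an isomorphism of projective special K\"ahler manifolds. The one real difference is the rigidity input: the paper notes that $-\Omega_{\Pj^n_{\C}}$ is a $\lie{u}(n)$-invariant, hence parallel, curvature tensor and extends a curvature-preserving linear isometry of tangent spaces to a global isometry via \cite[I, Corollary 7.3, p.\ 261]{KN}, whereas you identify $M$ as a complete simply connected K\"ahler manifold of constant negative holomorphic sectional curvature and quote the space-form classification \cite[II, Theorem 7.9]{KN}; both are valid, yours being the more off-the-shelf citation, the paper's working directly with parallelism without naming the space form. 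One caveat on your final step: Remark \ref{rmk:dR0_unique_structure} only asserts uniqueness within the family of structures produced in Proposition \ref{prop:PSKcerchioConnessioni} by twisting a \emph{fixed} deviance by a $\U(1)$-valued function, so it does not literally say that the PSK structure is determined up to isomorphism by the K\"ahler structure. What you actually need (and what the paper itself asserts with no more detail) is that a zero-deviance structure reduces to an $S^1$-bundle with connection of curvature $-2\omega$, and over the contractible base $M\cong\Hyp_{\C}^n$ (so $H^2(M,\Z)=0$ and $H^1_{dR}(M)=0$) such data is unique up to isomorphism; with that justification in place of the remark, your argument is complete.
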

\begin{proof}
Let $(\pi\colon \widetilde{M}\to M,\nabla)$ be such a projective special K\"{a}hler manifold.
Consider a point $p\in M$, then $(T_p M,g,I)$ can be seen as a complex vector space compatible with the metric and can thus be identified with the tangent space at a point of $\Hyp_\C^n$ via an isomorphism $F$ as they are both isomorphic to $\C^n$ with the standard metric.
Being complex manifolds, $\Hyp_\C^n$ and $M$ are analytic, and since the curvature of $M$ is forced to be $-\Omega_{\Pj^n_{\C}}$, which corresponds to a $\lie{u}(n)$-invariant map from the bundle of unitary frames to $S^2(\lie{u}(n))$, it is also parallel with respect to the Levi-Civita connection.
It follows that the linear isomorphism $F$ preserves the curvature tensors and their covariant derivatives.
Thus, $F$ can be extended to a diffeomorphism $f\colon M\to\Hyp_\C^n$ (See \cite[I, Corollary 7.3, p.\ 261]{KN}) such that $F$ is its differential at $p$.

Since $F$ preserves $I$ and $\omega$ which are parallel, $f$ is an isomorphism of K\"{a}hler manifolds, as the latter maps parallel tensors to parallel tensors.
Since the deviance of both manifolds is zero, we also have an isomorphism of projective special K\"{a}hler manifolds.
\end{proof}
\section{Classification of projective special K\"{a}hler Lie groups in dimension 4}
If $M$ is a Lie group, the conditions of Theorem \ref{theo:characterisationPSK} are simpler, because a Lie group is always parallelisable.
As a consequence, the bundle $\sharp_2 S_{3,0}(M)$ is trivial, and in particular we have a global coordinate system to write the local deviances.

\begin{defi}\label{def:PSKLg}
A projective special K\"{a}hler Lie group is a Lie group with projective special K\"{a}hler structure such that the K\"{a}hler structure is left-invariant.
\end{defi}
Notice that we do not require the deviance to be left-invariant.

An example is $\Hyp^n_{\C}$, since the Iwasawa decomposition $\SU(n,1)=KAN$ (see \cite[Theorem 1.3, p.\ 403]{Helgason}) gives a left-invariant K\"{a}hler structure on the solvable Lie group $AN$.
We denote by $\Hyp_\lambda$ the hyperbolic plane with curvature $-\lambda^2$, which is actually just a rescaling of $\Hyp_{\C}^1$.

With Definition \ref{def:PSKLg}, we are able to classify $4$-dimensional projective special K\"{a}hler Lie groups; we obtain exactly two, which coincide with the two $4$-dimensional cases appearing in the classification of projective special K\"{a}hler manifolds homogeneous under the action of a semisimple Lie groups (\cite{CortesClassHomoSS}).
\begin{theo}\label{theo:classificazionePSK4}
Up to isomorphisms of projective special K\"{a}hler manifolds, there are only two $4$-dimensional connected and simply connected projective special K\"{a}hler Lie groups: $\Hyp_{\sqrt{2}}\times\Hyp_{2}$ and the complex hyperbolic plane.

Up to isomorphisms that also preserve the Lie group structure, there are four families of $4$-dimensional connected and simply connected projective special K\"{a}hler Lie groups, listed in Table \ref{table:PSKLieGroups}.
\begin{table}[!ht]
\centering
\begin{tabular}[c]{|c|l|c|c|}
\hline
PSK &Diff.\ complex unitary cof.\ $\theta$ &Riemann curv.\ &$\sigma$\\
\hline
$\Hyp_{\sqrt{2}}\times\Hyp_2$&
\begin{tabular}{@{}l@{}}
$d\theta^1=-\frac{\sqrt{2}}{2}\overline{\theta^1}\wedge\theta^1$\\
$d\theta^2=-\overline{\theta^2}\wedge\theta^2$\\
\end{tabular}
&
\begin{tabular}{@{}l@{}}
$\frac{1}{2}(\overline{\theta^1}\wedge\theta^1)^2$\\
$+(\overline{\theta^2}\wedge\theta^2)^2$
\end{tabular}
&
$\frac{3}{2}(\theta^1)^2\theta^2$\\
\hline
$\Hyp_{\C}^2$&
\begin{tabular}{@{}l@{}}
$d\theta^1=\frac{1}{2}\theta^1\wedge(\theta^2+\overline{\theta^2})$\\
$d\theta^2=-\overline{\theta^1}\wedge\theta^1-\overline{\theta^2}\wedge\theta^2$\\
\end{tabular}
&
$-\Omega_{\Pj^{2}_{\C}}^{\flat}$
&0\\
\hline
$\Hyp_{\C}^2$&
\begin{tabular}{@{}l@{}}
$d\theta^1=(\frac{1}{2}-\frac{i}{\delta})\theta^1\wedge(\theta^{2}+\overline{\theta^2})$\\
$d\theta^2=-\overline{\theta^1}\wedge\theta^1-\overline{\theta^2}\wedge\theta^2$\\
$\delta>0$\\
\end{tabular}
&
$-\frac{1}{\delta}\Omega_{\Pj_{\C}^2}^{\flat}$
&0\\
\hline
$\Hyp_{\C}^2$&
\begin{tabular}{@{}l@{}}
$d\theta^1=-(\frac{1}{\delta}+\frac{i}{2})\theta^1\wedge(\theta^{2}-\overline{\theta^2})$\\
$d\theta^2=-i\overline{\theta^1}\wedge\theta^1-\overline{\theta^2}\wedge\theta^2$\\
$\delta>0$\\
\end{tabular}
&
$-\frac{1}{\delta}\Omega_{\Pj^{2}_{\C}}^{\flat}$
&0\\
\hline
\end{tabular}
\caption{Connected projective special K\"ahler Lie groups.}
\label{table:PSKLieGroups}
\end{table}
\end{theo}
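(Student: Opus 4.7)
The strategy is to leverage Ovando's classification of 4-dimensional Kähler Lie groups and, for each candidate, determine whether the Kähler structure admits a deviance tensor satisfying the equations of Corollary \ref{cor:charPSKesatte}. Since a simply connected Lie group is contractible, $H^2(M,\Z)=0$, so the corollary applies: the problem reduces to finding, on the given Kähler Lie group, a 1-form $\lambda$ with $d\lambda=\omega$ and a section $\eta$ of $\sharp_2 S_{3,0}M$ such that $\Omega^{LC}+\Omega_{\Pj^2_\C}+[\eta\wedge\overline{\eta}]=0$ and $d^{LC}\eta=-4i\lambda\wedge\eta$.

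First I would narrow Ovando's list drastically using the scalar-curvature obstruction \eqref{eq:ScalCurv}: only Kähler Lie groups with $\scal_M\ge -6$ can be projective special Kähler, and the equality case forces $\eta\equiv 0$ and (by Proposition \ref{prop:uniquenessHyp}) forces the universal cover to be $\Hyp_\C^2$. This immediately isolates the $\Hyp_\C^2$-realisations, which then must be identified as Lie-group structures on the Iwasawa factor $AN\subseteq\SU(2,1)$ and its deformations; enumerating left-invariant Kähler structures on this solvable group produces the last three rows of Table \ref{table:PSKLieGroups} with $\eta=0$.

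For the remaining candidates in Ovando's list with $\scal_M>-6$, I would fix a left-invariant unitary coframe $\theta$ adapted to the Kähler structure and write $\eta=\eta^j_{k,h}\theta^k\otimes\overline{\theta_j}\otimes\theta^h$ with totally symmetric coefficients. The curvature condition \ref{eq:condCurvCor} becomes an algebraic identity in $\lie{u}(2)\otimes\Lambda^{1,1}T^*M$ relating the structure constants of the Lie algebra to the bilinear expression $[\eta\wedge\overline{\eta}]$; imposing compatibility of the Ricci component via \eqref{eq:RicciPSK} pins down which components of $\eta$ may be non-zero. The differential condition \ref{eq:condDiffCor} then determines $\lambda$ (via $d\lambda=\omega$) together with the covariant derivatives of the $\eta^j_{k,h}$, forcing a handful of polynomial constraints on the remaining free parameters. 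Case by case, most families in Ovando's list are eliminated, and the only surviving Kähler structure with $\eta\neq 0$ is seen to be $\Hyp_{\sqrt 2}\times\Hyp_2$ with the displayed cubic $\sigma=\tfrac{3}{2}(\theta^1)^2\theta^2$.

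The main obstacle is the bookkeeping through Ovando's list: each family gives several a priori inequivalent left-invariant Kähler structures, and for each one the two conditions \ref{eq:condCurvCor} and \ref{eq:condDiffCor} must be carried out explicitly, ruling out solvable and nilpotent candidates whose scalar curvature or whose Ricci non-negative part is incompatible with the rigid form of $[\eta\wedge\overline{\eta}]$ in dimension $n=2$. Finally I would verify that the four Lie-group structures appearing in Table \ref{table:PSKLieGroups} collapse to exactly two equivalence classes under projective special Kähler isomorphism, using Proposition \ref{prop:uniquenessHyp} to collapse the three $\Hyp_\C^2$ rows and the explicit curvature computation to separate them from $\Hyp_{\sqrt 2}\times\Hyp_2$.
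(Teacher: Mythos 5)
Your overall strategy coincides with the paper's: restrict Ovando's list to the positive-definite (K\"ahler) cases, use the contractibility of a simply connected solvable Lie group to invoke Corollary \ref{cor:charPSKesatte}, impose the algebraic curvature condition \ref{eq:condCurvCor} and then the differential condition \ref{eq:condDiffCor}. Two steps, however, would not go through as you describe them.

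First, the scalar-curvature filter does much less than you claim. The inequality $\scal_M\ge -6$ with equality forcing $\eta=0$ only tells you what a group must be \emph{if} it is projective special K\"ahler; it does not "immediately isolate" the $\Hyp_{\C}^2$ realisations, because for the structures whose curvature is $-a^2\Omega_{\Pj^2_{\C}}$ the scalar curvature is $-6a^2$, and ruling out the range $a<1$ (where a non-zero deviance with $\norm{\eta}^2=6(1-a^2)$ would be needed) already requires the full tensorial condition \ref{eq:condCurvCor}, not just its trace. More seriously, the K\"ahler structure on $\lie{d}_{4,2}$ has curvature $-\Omega_{\Pj^2_{\C}}-6H_2$ with $\scal_M>-6$, and the curvature condition there \emph{does} admit the non-zero solution $\sigma=\frac{\sqrt{3}}{2}e^{i\alpha}(\theta^2)^3$; this candidate survives all the algebra and is eliminated only because $d^{LC}\sigma$ contains a term proportional to $\overline{\theta^2}\wedge\theta^1(\theta^2)^2$ that can never be of the form $-4i\lambda\wedge\sigma$. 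Your narrative, in which the non-zero-deviance analysis is expected to leave only $\Hyp_{\sqrt{2}}\times\Hyp_2$ standing after the curvature condition, misses that a second structure passes the curvature test with $\eta\neq 0$ and must be killed by the differential condition.

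Second, Definition \ref{def:PSKLg} does not require the deviance to be left-invariant, so you cannot write $\eta$ with constant coefficients and reduce to "polynomial constraints on free parameters". The coefficients $c_1,\dots,c_4$ are functions in $\smooth{M,\C}$; the pointwise system (orthogonality and prescribed norms of the Gram vectors $v_1,v_2,v_3$) determines them only up to a phase $e^{i\alpha}$ with $\alpha\in\smooth{M}$ arbitrary, and the differential condition must then be verified for every such $\alpha$ (for $\Hyp_{\sqrt{2}}\times\Hyp_2$ it holds with $\lambda=-\frac{1}{4}d\alpha-\frac{1}{\sqrt{2}}u^2-\frac{1}{2}u^4$). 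Finally, to get the count "up to isomorphism" in the statement one needs Proposition \ref{prop:isomorphism_if_root} together with Remark \ref{rmk:dR0_unique_structure} to see that the whole $e^{i\alpha}$-family yields a single structure, and Proposition \ref{prop:uniquenessHyp} to identify the three zero-deviance rows with $\Hyp_{\C}^2$; your sketch uses the latter but omits the former.
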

\begin{proof}
We will study K\"ahler Lie groups through K\"ahler Lie algebras, see e.g.\ \cite[\S 1.1, p.\ 26]{DorfmeisterNakajima}.
We will start from the classification of pseudo-K\"{a}hler Lie groups provided by \cite{Ovando2004}.
Table \ref{table:classificationOvando} displays the eighteen families of non-abelian pseudo-K\"{a}hler Lie algebras in dimension $4$.
\begin{table}[!ht]
\centering
\begin{tabular}[c]{|l|l|p{6cm}|}
\hline
$\lie{g}$		&		$I$		&		$\omega$\\
\hline
$\lie{rh}_3$	&$Ie_1=e_2, Ie_3=e_4$&$a_1 (e^{13}+e^{24})+a_2(e^{14}-e^{23})+a_3 e^{12}$, $a_1^2+a_2^2\neq 0$\\
\hline
$\lie{rr}_{3,0}$	&$Ie_1=e_2, Ie_3=e_4$&$a_1 e^{12}+a_2 e^{34}$, $a_1a_2\neq 0$\\
\hline
$\lie{rr}'_{3,0}$	&$Ie_1=e_4, Ie_2=e_3$&$a_1 e^{14}+a_2 e^{23}$, $a_1a_2\neq 0$\\
\hline
$\lie{r}_2\lie{r}_2$&$Ie_1=e_2, Ie_3=e_4$&$a_1 e^{12}+a_2 e^{34}$, $a_1a_2\neq 0$\\
\hline
$\lie{r}'_2$	&$Ie_1=e_3, Ie_2=e_4$&$a_1 (e^{13}-e^{24})+a_2(e^{14}+e^{23})$, $a_1^2+a_2^2\neq 0$\\
\hline
$\lie{r}'_2$	&$Ie_1=-e_2, Ie_3=e_4$&$a_1 (e^{13}-e^{24})+a_2(e^{14}+e^{23})+a_3 e^{12}$, $a_1^2+a_2^2\neq 0$\\
\hline
$\lie{r}_{4,-1,-1}$	&$Ie_4=e_1, Ie_2=e_3$&$a_1 (e^{12}+e^{34})+a_2(e^{13}-e^{24})+a_3 e^{14}$, $a_1^2+a_2^2\neq 0$\\
\hline
$\lie{r}'_{4,0,\delta}$	&$Ie_4=e_1, Ie_2=e_3$&$a_1 e^{14}+a_2 e^{23}$, $a_1a_2\neq 0$, $\delta>0$\\
\hline
$\lie{r}'_{4,0,\delta}$	&$Ie_4=e_1, Ie_2=-e_3$&$a_1 e^{14}+a_2 e^{23}$, $a_1a_2\neq 0$, $\delta>0$\\
\hline
$\lie{d}_{4,1}$	&$Ie_1=e_4, Ie_2=e_3$&$a_1 (e^{12}-e^{34})+a_2e^{14}$, $a_1\neq 0$\\
\hline
$\lie{d}_{4,2}$	&$Ie_4=-e_2, Ie_1=e_3$&$a_1 (e^{14}+e^{23})+a_2e^{24}$, $a_1\neq 0$\\
\hline
$\lie{d}_{4,2}$	&$Ie_4=-2e_1, Ie_2=e_3$&$a_1 e^{14}+a_2 e^{23}$, $a_1a_2\neq 0$\\
\hline
$\lie{d}_{4,1/2}$	&$Ie_4=e_3, Ie_1=e_2$&$a_1 (e^{12}-e^{34})$, $a_1\neq 0$\\
\hline
$\lie{d}_{4,1/2}$	&$Ie_4=e_3, Ie_1=-e_2$&$a_1 (e^{12}-e^{34})$, $a_1\neq 0$\\
\hline
$\lie{d}'_{4,\delta}$	&$Ie_4=e_3, Ie_1=e_2$&$a_1 (e^{12}-\delta e^{34})$, $a_1\neq 0$, $\delta>0$\\
\hline
$\lie{d}'_{4,\delta}$	&$Ie_4=-e_3, Ie_1=e_2$&$a_1 (e^{12}-\delta e^{34})$, $a_1\neq 0$, $\delta>0$\\
\hline
$\lie{d}'_{4,\delta}$	&$Ie_4=-e_3, Ie_1=-e_2$&$a_1 (e^{12}-\delta e^{34})$, $a_1\neq 0$, $\delta>0$\\
\hline
$\lie{d}'_{4,\delta}$	&$Ie_4=e_3, Ie_1=-e_2$&$a_1 (e^{12}-\delta e^{34})$, $a_1\neq 0$, $\delta>0$\\
\hline
\end{tabular}
\caption[Pseudo-K\"{a}hler Lie algebras]{Classification of $4$-dimensional pseudo-K\"{a}hler non-abelian Lie algebras \cite[Table 5.1, p.\ 63]{Ovando2004}.}
\label{table:classificationOvando}
\end{table}

Among these families, only for the ones in Table \ref{table:KahlerLieAlgebras} the metric can be positive definite, i.e.\ K\"{a}hler.
\begin{table}[!ht]
\centering
\begin{tabular}[c]{|l|l|l|l|l|}
\hline
Case& $\lie{g}$		&		$I$		&		$\omega$&	Conditions\\
\hline
I&$\lie{rr}_{3,0}$	&$Ie_1=e_2, Ie_3=e_4$&$a_1 e^{12}+a_2 e^{34}$& $a_1, a_2>0$\\
\hline
II&$\lie{rr}'_{3,0}$	&$Ie_1=e_4, Ie_2=e_3$&$a_1 e^{14}+a_2 e^{23}$& $a_1, a_2> 0$\\
\hline
III&$\lie{r}_2\lie{r}_2$&$Ie_1=e_2, Ie_3=e_4$&$a_1 e^{12}+a_2 e^{34}$& $a_1, a_2> 0$\\
\hline
IV&$\lie{r}'_{4,0,\delta}$	&$Ie_4=e_1, Ie_2=e_3$&$a_1 e^{14}+a_2 e^{23}$& $a_1<0;a_2,\delta>0$\\
\hline
V&$\lie{r}'_{4,0,\delta}$	&$Ie_4=e_1, Ie_2=-e_3$&$a_1 e^{14}+a_2 e^{23}$& $a_1,a_2<0; \delta>0$\\
\hline
VI&$\lie{d}_{4,2}$	&$Ie_4=-2e_1, Ie_2=e_3$&$a_1 e^{14}+a_2 e^{23}$& $a_1,a_2> 0$\\
\hline
VII&$\lie{d}_{4,1/2}$	&$Ie_4=e_3, Ie_1=e_2$&$a_1 (e^{12}-e^{34})$& $a_1> 0$\\
\hline
VIII&$\lie{d}'_{4,\delta}$	&$Ie_4=e_3, Ie_1=e_2$&$a_1 (e^{12}-\delta e^{34})$& $a_1,\delta>0$\\
\hline
IX&$\lie{d}'_{4,\delta}$	&$Ie_4=-e_3, Ie_1=-e_2$&$a_1 (e^{12}-\delta e^{34})$& $a_1<0;\delta>0$\\
\hline
\end{tabular}
\caption{Non-abelian K\"{a}hler Lie algebras of dimension $4$.}
\label{table:KahlerLieAlgebras}
\end{table}
It is now straightforward to find a unitary frame $u$ for each case, that is such that $g=\sum_{k=1}^4 (u^k)^2$, $Iu_1=u_2$, $Iu_3=u_4$ and $\omega=u^{1,2}+u^{3,4}$.
With respect to $u$, we can write the new structure constants and compute the Levi-Civita connection form $\omega^{LC}$ and the corresponding curvature form $\Omega^{LC}$.
We write
\begin{align*}
H_1:=\begin{pmatrix}
&\quad -u^{12}&\rvline&&\\
u^{12}&&\rvline&&\\
\hline
&&\rvline&&\\
&&\rvline&&
\end{pmatrix},& &
H_2=\begin{pmatrix}
&&\rvline&&\\
&&\rvline&&\\
\hline
&&\rvline&&-u^{34}\\
&&\rvline&u^{34}&
\end{pmatrix}.
\end{align*}
\begin{table}
\centering
\begin{tabular}[c]{|c|c|l|c|}
\hline
Case& $\lie{g}$ &Str.\ constants & $\Omega^{LC}$\\
\hline
I&$\lie{rr}_{3,0}$	&
\begin{tabular}{@{}l@{}}
$[u_1,u_2]=a u_2$\\
$a>0$\\
\end{tabular}
&
$a^2 H_1$
\\
\hline
II&$\lie{rr}'_{3,0}$	&
\begin{tabular}{@{}l@{}}
$[u_1,u_3]=- u_4$\\
$[u_1,u_4]= u_3$\\
\end{tabular}
&
0
\\
\hline
III&$\lie{r}_2\lie{r}_2$&
\begin{tabular}{@{}l@{}}
$[u_1,u_2]=a u_2$\\
$[u_3,u_4]=b u_4$\\
$a,b>0$\\
\end{tabular}
&
$a^2 H_1+b^2 H_2$
\\
\hline
IV&$\lie{r}'_{4,0,\delta}$	&
\begin{tabular}{@{}l@{}}
$[u_1,u_2]=a u_2$\\
$[u_1,u_3]=-\delta a u_4$\\
$[u_1,u_4]=\delta a u_3$\\
$a,\delta>0$\\
\end{tabular}
&
$a^2H_1$
\\
\hline
V&$\lie{r}'_{4,0,\delta}$	&
\begin{tabular}{@{}l@{}}
$[u_1,u_2]=a u_2$\\
$[u_1,u_3]=\delta a u_4$\\
$[u_1,u_4]=-\delta a u_3$\\
$a,\delta>0$\\
\end{tabular}
&
$a^2 H_1$
\\
\hline
VI&$\lie{d}_{4,2}$	&
\begin{tabular}{@{}l@{}}
$[u_1,u_2]=-2a u_1$\\
$[u_1,u_3]=2a u_4$\\
$[u_2,u_3]=-a u_3$\\
$[u_2,u_4]=a u_4$\\
$a>0$\\
\end{tabular}
&
$-a^2\Omega_{\Pj^2_{\C}}-6a^2 H_2$
\\
\hline
VII&$\lie{d}_{4,1/2}$	&
\begin{tabular}{@{}l@{}}
$[u_1,u_2]=2a u_4$\\
$[u_1,u_3]=-a u_1$\\
$[u_2,u_3]=-a u_2$\\
$[u_3,u_4]=2a u_4$\\
$a>0$\\
\end{tabular}
&
$-a^2\Omega_{\Pj^2_{\C}}$
\\
\hline
VIII&$\lie{d}'_{4,\delta}$	&
\begin{tabular}{@{}l@{}}
$[u_1,u_2]=2a\sqrt{\delta} u_4$\\
$[u_1,u_3]=-a\sqrt{\delta} u_1+\frac{2a}{\sqrt{\delta}} u_2$\\
$[u_2,u_3]=-\frac{2a}{\sqrt{\delta}} u_1-a\sqrt{\delta} u_2$\\
$[u_3,u_4]=2a\sqrt{\delta} u_4$\\
$a,\delta>0$\\
\end{tabular}
&
$-\delta a^2\Omega_{\Pj^2_{\C}}$
\\
\hline
IX&$\lie{d}'_{4,\delta}$	&
\begin{tabular}{@{}l@{}}
$[u_1,u_2]=-2a\sqrt{\delta} u_3$\\
$[u_1,u_4]=-a\sqrt{\delta} u_1-\frac{2a}{\sqrt{\delta}} u_2$\\
$[u_2,u_4]=+\frac{2a}{\sqrt{\delta}} u_1-a\sqrt{\delta} u_2$\\
$[u_3,u_4]=-2a\sqrt{\delta} u_3$\\
$a,\delta>0$\\
\end{tabular}
&
$-\delta a^2 \Omega_{\Pj^2_{\C}}$
\\
\hline
\end{tabular}
\caption{Curvature tensors.}
\label{table:curvatures}
\end{table}

From Table \ref{table:curvatures} we notice that the curvature tensors are of two types:
\begin{enumerate}[label=(\roman*)]
\item\label{case:curvi} $a^2 H_1+b^2 H_2$ for $a,b\ge 0$;
\item\label{case:curvii} $-a^2(\Omega_{\Pj^2_{\C}}+6bH_2)$ for $a>0$ and $b\in\{0,1\}$.
\end{enumerate}

A K\"{a}hler Lie group $M$ of dimension $4$ is always solvable \cite[Theorem 9, p.\ 155]{Chu}.
This implies that $M$ is the product of a torus\index{torus} (product of circumferences) and a euclidean space \cite[Theorem 2\textsuperscript{a}, .\ 675]{Chevalley}, but $M$ is also simply connected, so it must be an euclidean space, and thus contractible.
If now $M$ has a projective special K\"{a}hler structure, thanks to Corollary \ref{cor:charPSKesatte}, there is a global section $\eta\colon M\to \sharp_2 S_{3,0}M$ satisfying \ref{eq:condCurvCor} and \ref{eq:condDiffCor}.
Applying $\flat_2$ we obtain a global section $\sigma$ of $S_{3,0}M$ which better displays the symmetry.

Consider the globally defined complex coframe $\theta^1=u^1+iu^2$, $\theta^2=u^3+i u^4$.
We write $\sigma$ in its generic form with respect to $\theta$:
\begin{equation}
\sigma=c_1(\theta^1)^3+c_2(\theta^1)^2\theta^2+c_3\theta^1(\theta^2)^2+c_4(\theta^2)^3
\end{equation}
for some functions $c_1,c_2,c_3,c_4\in\smooth{M,\C}$.
By raising the second index, we obtain $\eta=\sharp_2\sigma$ which is
\begin{align}
\eta
&=2c_1\theta^1\otimes \overline{\theta_1}\otimes \theta^1
+\frac{2c_2}{3}\left(\theta^1\otimes \overline{\theta_1}\otimes\theta^2+\theta^1\otimes \overline{\theta_2}\otimes\theta^1+\theta^2\otimes \overline{\theta_1}\otimes\theta^1\right)\\*
&\quad +\frac{2c_3}{3}\left(\theta^1\otimes \overline{\theta_2}\otimes\theta^2+\theta^2\otimes \overline{\theta_1}\otimes\theta^2+\theta^2\otimes \overline{\theta_2}\otimes\theta^1\right)
+2c_4\theta^2\otimes \overline{\theta_2}\otimes \theta^2.
\end{align}

With respect to this generic section, we can compute $[\eta\wedge\overline{\eta}]$ explicitly:
\allowdisplaybreaks[0]
\begin{align*}
[\eta\wedge\overline{\eta}]
=\frac{4}{9}\RE&\left(\overline{\theta^1}\wedge\theta^1\otimes \begin{pmatrix}
9|c_1|^2+|c_2|^2&3\overline{c_1}c_2+\overline{c_2}c_3\\
3\overline{c_2}c_1+\overline{c_3}c_2&|c_2|^2+|c_3|^2\\
\end{pmatrix}\right.\\
&\quad +\overline{\theta^1}\wedge\theta^2\otimes \begin{pmatrix}
3\overline{c_1}c_2+\overline{c_2}c_3&\overline{c_1}c_3+\overline{c_2}c_4\\
|c_2|^2+|c_3|^2&\overline{c_2}c_3+3\overline{c_3}c_4\\
\end{pmatrix}\\
&\quad +\overline{\theta^2}\wedge\theta^1\otimes \begin{pmatrix}
3\overline{c_2}c_1+\overline{c_3}c_2&|c_2|^2+|c_3|^2\\
\overline{c_3}c_1+\overline{c_4}c_2&\overline{c_3}c_2+3\overline{c_4}c_3\\
\end{pmatrix}\\
&\quad +\left.\overline{\theta^2}\wedge\theta^2\otimes \begin{pmatrix}
|c_2|^2+|c_3|^2&\overline{c_2}c_3+\overline{c_3}c_4\\
\overline{c_3}c_2+3\overline{c_4}c_3&|c_3|^2+9|c_4|^2\\
\end{pmatrix}\right).
\end{align*}
\allowdisplaybreaks
Notice that if we define $v_1,v_2,v_3\in \smooth{M,\C^2}$ such that
\begin{align}\label{eq:defVk}
v_1:=\begin{pmatrix}
2c_1\\
\frac{2c_2}{3}
\end{pmatrix}=\begin{pmatrix}
x\\
y
\end{pmatrix},\quad v_2:=\begin{pmatrix}
\frac{2c_2}{3}\\
\frac{2c_3}{3}
\end{pmatrix}=\begin{pmatrix}
y\\
z
\end{pmatrix},\quad v_3:=\begin{pmatrix}
\frac{2c_3}{3}\\
2c_4
\end{pmatrix}=\begin{pmatrix}
z\\
w
\end{pmatrix},
\end{align}
then we have
\begin{align*}
[\eta\wedge\overline{\eta}]
&=\RE\left(\overline{\theta^1}\wedge\theta^1\otimes \begin{pmatrix}
\norm{v_1}^2&\langle v_1,v_2\rangle\\
\overline{\langle v_1,v_2\rangle}&\norm{v_2}^2\\
\end{pmatrix}
+\overline{\theta^1}\wedge\theta^2\otimes \begin{pmatrix}
\langle v_1,v_2\rangle&\langle v_1,v_3\rangle\\
\norm{v_2}^2&\langle v_2,v_3\rangle\\
\end{pmatrix}\right.\\
&\quad\left.+\overline{\theta^2}\wedge\theta^1\otimes \begin{pmatrix}
\overline{\langle v_1,v_2\rangle}&\norm{v_2}^2\\
\overline{\langle v_2,v_3\rangle}&\overline{\langle v_2,v_3\rangle}\\
\end{pmatrix}
+\overline{\theta^2}\wedge\theta^2\otimes \begin{pmatrix}
\norm{v_2}^2&\langle v_2,v_3\rangle\\
\overline{\langle v_2,v_3\rangle}&\norm{v_3}^2\\
\end{pmatrix}\right).
\end{align*}
In other words, the coefficients of $[\eta\wedge\overline{\eta}]$ are the pairwise Hermitian products of $v_1,v_2,v_3$.

Returning to the classification, if we write $H_1,H_2,\Omega_{\Pj^2_{\C}}$ with respect to the complex coframe, we notice that the positions corresponding to the mixed Hermitian products are always zero.
\begin{align*}
H_1&=\RE\left(\overline{\theta^1}\wedge\theta^1\otimes\begin{pmatrix}
\frac{1}{2}&0\\
0&0
\end{pmatrix}\right),\qquad
H_2=\RE\left(\overline{\theta^2}\wedge\theta^2\otimes\begin{pmatrix}
0&0\\
0&\frac{1}{2}
\end{pmatrix}\right),\\
\Omega_{\Pj^2_{\C}}&=\RE\left(\overline{\theta^1}\wedge\theta^1\otimes\begin{pmatrix}
-2&0\\
0&-1\\
\end{pmatrix}
+\overline{\theta^1}\wedge\theta^2\otimes\begin{pmatrix}
0&0\\
-1&0
\end{pmatrix}\right.\\
&\quad \left.+\overline{\theta^2}\wedge\theta^1\otimes\begin{pmatrix}
0&-1\\
0&0\\
\end{pmatrix}+\overline{\theta^2}\wedge\theta^2\otimes\begin{pmatrix}
-1&0\\
0&-2\\
\end{pmatrix}\right).
\end{align*}
As a consequence, for all cases, if \ref{eq:condCurvCor} holds, then $v_1,v_2,v_3$ must be orthogonal.

Now we will treat each case of possible curvature tensor separately.
\begin{enumerate}[label=(\roman*)]
\item Let $a,b\ge 0$ and $\Omega^{LC}=a^2 H_1+b^2 H_2$, then
\begin{equation}
\Omega^{LC}=\RE\left(\overline{\theta^1}\wedge\theta^1\otimes\begin{pmatrix}
\frac{a^2}{2}&0\\
0&0\\
\end{pmatrix}+\overline{\theta^2}\wedge\theta^2\otimes\begin{pmatrix}
0&0\\
0&\frac{b^2}{2}\\
\end{pmatrix}\right).
\end{equation}
So, by \ref{eq:condCurvCor}, $[\eta\wedge\overline{\eta}]=-\Omega^{LC}-\Omega_{\Pj^2_{\C}}$, which implies
\begin{align*}
\norm{v_1}^2=2-\frac{a^2}{2},&&\norm{v_2}^2=1,&&\norm{v_3}^2=2-\frac{b^2}{2}.
\end{align*}
These equalities translate to a linear system in the squared norms of $x,y,z,w$ introduced in \eqref{eq:defVk}, namely
\begin{equation}
\begin{cases}
|x|^2+|y|^2=2-\frac{a^2}{2}\\
|y|^2+|z|^2=1\\
|z|^2+|w|^2=2-\frac{b^2}{2}
\end{cases}.
\end{equation}
Its solutions are
\begin{align}\label{eq:systemHxHnorms}
\begin{cases}
|x|^2=1-\frac{a^2}{2}+s\\
|y|^2=1-s\\
|z|^2=s\\
|w|^2=2-\frac{b^2}{2}-s
\end{cases}&& \textrm{ for } s\in [0,1].
\end{align}
Imposing the orthogonality conditions $\langle v_1,v_2\rangle=\langle v_2,v_3\rangle=\langle v_3,v_4\rangle=0$, we get:
\begin{align}\label{eq:systemHxHprod}
\begin{cases}
\overline{x}y+\overline{y}z=0\\
\overline{y}z+\overline{z}w=0\\
\overline{x}z+\overline{y}w=0\\
\end{cases}.
\end{align}
Notice that because of \eqref{eq:systemHxHnorms}, $y$ and $z$ cannot vanish simultaneously, so we have (at each point) three different cases:
\begin{itemize}
\item
Suppose at first that $z=0$, then $s=0$ and $\norm{y}=1$, so $y\neq 0$ and \eqref{eq:systemHxHprod} becomes
\begin{align}
\begin{cases}
\overline{x}y=0\\
0=0\\
\overline{y}w=0\\
\end{cases}.
\end{align}
Implying $x=w=0$, so the solutions are $(x,y,z,w)=(0,y,0,0)$ for $y\in \smooth{U,\U(1)}$.
Now $M$ is simply connected, so $y=e^{i\alpha}$ for some $\alpha\in\smooth{M}$, as $y$ lifts to the universal cover $\exp\colon i\R\to \U(1)$.
Thus we have $(c_1,c_2,c_3,c_4)=(0,\frac{3}{2}e^{i\alpha},0,0)$ for some $\alpha\in\smooth{M}$.
Finally, \eqref{eq:systemHxHnorms} gives
\begin{equation}
\begin{cases}
1-\frac{a^2}{2}=0\\
2-\frac{b^2}{2}=0
\end{cases}
\end{equation}
and thus $a=\sqrt{2}$ and $b=2$.
\item Suppose now that $z\neq 0$ and $y=0$, then \eqref{eq:systemHxHprod} becomes
\begin{align}
\begin{cases}
0=0\\
\overline{z}w=0\\
\overline{x}z=0\\
\end{cases}
\end{align}
and then $w=x=0$ so, similarly to the previous case, the solutions are $(c_1,c_2,c_3,c_4)=(0,0,e^{i\alpha},0)$ for $\alpha\in\smooth{M}$ and this time, \eqref{eq:systemHxHnorms} implies $a=2$ and $b=\sqrt{2}$.
\item The remaining case has $z\neq 0$ and $y\neq 0$.
In order to solve it, let us call $t:=\overline{y}z\neq 0$, then \eqref{eq:systemHxHnorms} and \eqref{eq:systemHxHprod} give
\begin{align*}
z&=\frac{ty}{|y|^2}=\frac{ty}{1-s},\\
x&=-\frac{\overline{t}y}{|y|^2}=-\frac{\overline{t}y}{1-s},\\
w&=-\frac{tz}{|z|^2}=-\frac{t^2 y}{s(1-s)},\\
0=\overline{x}z+\overline{y}w&=\left(-\frac{t\overline{y}}{1-s}\right)\left(\frac{ty}{1-s}\right)+\overline{y}\left(-\frac{t^2y}{s(1-s)}\right)\\
&=-t^2\left(\frac{1}{1-s}+\frac{1}{s}\right)=-\frac{t^2}{s(1-s)},
\end{align*}
in contradiction with $t\neq 0$.
\end{itemize}
In conclusion, for this class of curvature tensors, the only solutions are for
\begin{align*}
a=\sqrt{2},&& b=2, && \sigma=\frac{3}{2}e^{i\alpha}(\theta^1)^2\theta^2,&& \textrm{for }\alpha\in\smooth{M}\textrm{, or}\\
a=2,&& b=\sqrt{2}, && \sigma=\frac{3}{2}e^{i\alpha}\theta^1(\theta^2)^2,&& \textrm{for }\alpha\in\smooth{M}.
\end{align*}
We deduce that in Table \ref{table:curvatures} there are no solutions for the cases I, II, IV, V, and the only solutions in case III are the ones mentioned before.
Moreover, these solutions are isomorphic to one another and the isomorphism is obtained by swapping $u_1$ with $u_3$ and $u_2$ with $u_4$.
The simply connected Lie group corresponding to this case is $\Hyp_{\sqrt{2}}\times \Hyp_{2}$.

Notice that the unique abelian K\"{a}hler $4$-dimensional Lie algebra is flat, so its curvature is also of type (i), with $a=b=0$; thus it cannot be endowed with a projective special K\"{a}hler structure.
\item Let now $a>0, b\in\{0,1\}$ and $\Omega^{LC}=-a^2(\Omega_{\Pj^2_{\C}}+6bH_2)$, then
\begin{align*}
&[\eta\wedge\overline{\eta}]
=-\Omega^{LC}-\Omega_{\Pj^{n}_{\C}}
=(a^2-1)\Omega_{\Pj^{n}_{\C}}+6a^2bH_2\\
&\quad=\RE\left(\overline{\theta^1}\wedge\theta^1\otimes\begin{pmatrix}
2(1-a^2)&0\\
0&1-a^2\\
\end{pmatrix}
+\overline{\theta^1}\wedge\theta^2\otimes\begin{pmatrix}
0&0\\
1-a^2&0
\end{pmatrix}\right.\\
&\quad\quad\left.+\overline{\theta^2}\wedge\theta^1\otimes\begin{pmatrix}
0&1-a^2\\
0&0\\
\end{pmatrix}+\overline{\theta^2}\wedge\theta^2\otimes\begin{pmatrix}
1-a^2&0\\
0&2-2a^2+3a^2b\\
\end{pmatrix}\right).
\end{align*}
Therefore, we obtain the equations
\begin{align*}
\norm{v_1}^2=2-2a^2,&&\norm{v_2}^2=1-a^2,&&\norm{v_3}^2=2-2a^2+3a^2b.
\end{align*}
Giving the conditions
\begin{equation}
\begin{cases}
|x|^2+|y|^2=2-2a^2\\
|y|^2+|z|^2=1-a^2\\
|z|^2+|w|^2=2-2a^2+3a^2b
\end{cases}
\end{equation}
with solutions
\begin{equation}\label{eq:systemHxHnorms2}
\begin{cases}
|x|^2=1-a^2+s\\
|y|^2=1-a^2-s\\
|z|^2=s\\
|w|^2=2-2a^2+3a^2b-s
\end{cases}\qquad\textrm{ for } s\in [0,1-a^2].
\end{equation}
We now impose the vanishing of $\langle v_1,v_2\rangle$, $\langle v_2,v_3\rangle$, $\langle v_3,v_4\rangle$, that is \eqref{eq:systemHxHprod}.

We have four different cases:
\begin{itemize}
\item
Suppose at first that $y=z=0$, then $s=0$ and $a=1$, so \eqref{eq:systemHxHprod} is always satisfied, while \eqref{eq:systemHxHnorms2} becomes
\begin{align}
\begin{cases}
|x|^2=0\\
|y|^2=0\\
|z|^2=0\\
|w|^2=3b
\end{cases}.
\end{align}
It has solutions $(x,y,z,w)=(0,0,0,\sqrt{3b}e^{i\alpha})$ for $\alpha\in\smooth{M}$ and thus $(c_1,c_2,c_3,c_4)=(0,0,0,\frac{\sqrt{3b}}{2}e^{i\alpha})$.
In conclusion, $a=1$ and $\sigma=\frac{\sqrt{3b}}{2}e^{i\alpha}(\theta_2)^3$.
\item Suppose now that $z=0$ but $y\neq 0$, then $s=0$ and $a^2-1\neq 0$.
The system \eqref{eq:systemHxHprod} implies $x=w=0$, but then by \eqref{eq:systemHxHnorms2}, $0=|x|^2=1-a^2\neq 0$, so in this case there are no solutions.
\item Analogously, if $z\neq 0$ but $y=0$, then $s=1-a^2$ and \eqref{eq:systemHxHprod} gives $w=x=0$, so from \eqref{eq:systemHxHnorms2} we get $0=|x|^2=2-2a^2=2|z|^2\neq 0$ leaving no solutions.
\item The remaining case has $z\neq 0$ and $y\neq 0$.
In order to solve it, let us call $t:=\overline{y}z\neq 0$, then \eqref{eq:systemHxHnorms2} and \eqref{eq:systemHxHprod} give
\begin{align}
z&=\frac{ty}{|y|^2}=\frac{ty}{1-a^2-s},\\
x&=-\frac{\overline{t}y}{|y|^2}=-\frac{\overline{t}y}{1-a^2-s},\\
w&=-\frac{tz}{|z|^2}=-\frac{t^2 y}{s(1-a^2-s)},\\
0=\overline{x}z+\overline{y}w
&=\left(\frac{-t\overline{y}}{1-a^2-s}\right)\left(\frac{ty}{1-a^2-s}\right)+\overline{y}\left(\frac{-t^2y}{s(1-a^2-s)}\right)\\
&=-t^2\left(\frac{1}{1-a^2-s}+\frac{1}{s}\right)=-\frac{t^2(1-a^2)}{s(1-a^2-s)}.
\end{align}
The latter implies $a=1$, and from \eqref{eq:systemHxHnorms2}, we deduce a contradiction: $0< |y|^2=-s<0$.
\end{itemize}
In conclusion, the only solutions for this type of curvature tensors are obtained for
\begin{align*}
a=1, &&b=0, && \sigma=0,&&\textrm{or }\\
a=1, &&b=1, && \sigma=\frac{\sqrt{3}}{2}e^{i\alpha}(\theta^2)^3, && \textrm{for }\alpha\in\smooth{M}.
\end{align*}
In Table \ref{table:curvatures}, these results correspond to: VI for $a=1$ and $\sigma=\frac{\sqrt{3}}{2}e^{i\alpha}(\theta^2)^3$ for $\alpha\in\smooth{M}$; VII for $a=1$ and $\sigma=0$; VIII and IX for $a=\frac{1}{\sqrt{\delta}}$, $\delta>0$ and $\sigma=0$.
\end{enumerate}

Table \ref{table:casesLeft} summarises (up to isomorphisms) the cases satisfying the curvature condition, showing the non vanishing differentials of the coframe and the Levi-Civita connection.
\begin{table}[!ht]
\centering
\begin{tabular}[c]{|c|l|c|c|}
\hline
Case &Structure constants &	Levi-Civita connection &PSK\\
\hline
III&
\begin{tabular}{@{}l@{}}
$du^2=-\sqrt{2}u^{1,2}$\\
$du^4=-2u^{3,4}$\\
\end{tabular}
&
$\begin{psmallmatrix}
&\sqrt{2}u^2&\rvline&&\\
-\sqrt{2}u^2&&\rvline&&\\
\hline
&&\rvline&&2 u^4\\
&&\rvline&-2 u^4&
\end{psmallmatrix}$
&\checkmark\\
\hline
VI&
\begin{tabular}{@{}l@{}}
$du^1=2u^{1,2}$\\
$du^3=u^{2,3}$\\
$du^4=-2u^{1,3}-u^{2,4}$\\
\end{tabular}
&
$\begin{psmallmatrix}
0&-2u^1&\rvline&u^4&u^3\\
2u^1&0&\rvline&-u^3&u^4\\
\hline
-u^4&u^3&\rvline&0&-u^1\\
-u^3&-u^4&\rvline&u^1&0
\end{psmallmatrix}$
&\\
\hline
VII&
\begin{tabular}{@{}l@{}}
$du^1=u^{1,3}$\\
$du^2=u^{2,3}$\\
$du^4=-2u^{1,2}-2u^{3,4}$\\
\end{tabular}
&
$\begin{psmallmatrix}
0&u^4&\rvline&-u^1&u^2\\
-u^4&0&\rvline&-u^2&-u^1\\
\hline
u^1&u^3&\rvline&0&2u^4\\
-u^2&-u^4&\rvline&-2u^4&0
\end{psmallmatrix}$
&\checkmark\\
\hline
VIII&
\begin{tabular}{@{}l@{}}
$du^1=u^{1,3}+\frac{2}{\delta}u^{2,3}$\\
$du^2=-\frac{2}{\delta}u^{1,3}+u^{2,3}$\\
$du^4=-2u^{1,2}-2u^{3,4}$\\
$\delta>0$\\
\end{tabular}
&
$\begin{psmallmatrix}
0&\frac{2}{\delta}u^3+u^4&\rvline&-u^1&u^2\\
-\frac{2}{\delta}u^3-u^4&0&\rvline&-u^2&-u^1\\
\hline
u^1&u^2&\rvline&0&2u^4\\
-u^2&u^1&\rvline&-2u^4&0
\end{psmallmatrix}$
&\checkmark\\
\hline
IX&
\begin{tabular}{@{}l@{}}
$du^1=u^{1,4}-\frac{2}{\delta}u^{2,4}$\\
$du^2=\frac{2}{\delta}u^{1,4}+u^{2,4}$\\
$du^3=2u^{1,2}+2u^{3,4}$\\
$\delta>0$\\
\end{tabular}
&
$\begin{psmallmatrix}
0&-\frac{2}{\delta}u^4-u^3&\rvline&-u^2&-u^1\\
\frac{2}{\delta}u^4+u^3&0&\rvline&u^1&-u^2\\
\hline
u^2&-u^1&\rvline&0&-2u^3\\
u^1&u^2&\rvline&2u^3&0
\end{psmallmatrix}$
&\checkmark\\
\hline
\end{tabular}
\caption{Cases satisfying the curvature condition.}
\label{table:casesLeft}
\end{table}

Now we must check whether condition \ref{eq:condDiffCor} holds for the cases left.
Notice that for cases III, VII, VIII, IX, the K\"{a}hler form is exact with invariant potentials; respectively $-\frac{1}{\sqrt{2}} u^2-\frac{1}{2}u^4$, $-\frac{1}{2}u^4$, $-\frac{1}{2}u^4$, $\frac{1}{2}u^3$.
We can immediately say that cases VII, VIII, IX are all projective special K\"{a}hler because $\sigma=0$, so the differential condition is trivially satisfied.

Concerning case III, we can compute $d^{LC}\sigma$ by understanding how the Levi-Civita connection behaves on the unitary complex coframe $\theta$.
\begin{align*}
\nabla^{LC}\theta^1
&=\nabla^{LC}u^{1}+i\nabla^{LC}u^2
=-(\omega^{LC})^1_k \otimes u^k-i(\omega^{LC})^2_k\otimes u^k\\
&=-\sqrt{2}u^2\otimes u^2+i\sqrt{2}u^2\otimes u^1
=\sqrt{2}i u^2\otimes \theta^1;\\
\nabla^{LC}\theta^2
&=\nabla^{LC}u^{3}+i\nabla^{LC}u^4
=-(\omega^{LC})^3_k \otimes u^k-i(\omega^{LC})^4_k\otimes u^k\\*
&=-2u^4\otimes u^4+i2u^4\otimes u^3
=2i u^4\otimes \theta^2.
\end{align*}
Now we can compute 
\begin{align*}
\nabla^{LC}\sigma
&=\nabla^{LC} \left(\frac{3}{2} e^{i\alpha} (\theta^1)^2\theta^2\right)\\*
&=\frac{3}{2} i d\alpha \otimes e^{i\alpha} (\theta^1)^2\theta^2+3 \sqrt{2}iu^2 e^{i\alpha}(\theta^1)^2\theta^2+\frac{3}{2} 2i u^4\otimes e^{i\alpha} (\theta^1)^2\theta^2\\
&=-4i\left(-\frac{1}{4}d\alpha-\frac{1}{\sqrt{2}}u^2-\frac{1}{2}u^4\right)\otimes \sigma.
\end{align*}
If we define $\lambda:=-\frac{1}{4}d\alpha-\frac{1}{\sqrt{2}}u^2-\frac{1}{2}u^4$, we have that $d\lambda=\omega$ and $d^{LC}\sigma=-4i\lambda\wedge\sigma$.
Thanks to Corollary \ref{cor:charPSKesatte}, we have proven that also case III has a projective special K\"{a}hler structure for every choice of $\alpha\in\smooth{M}$.

Suppose that VI is projective special K\"{a}hler, than by Theorem \ref{theo:characterisationPSK}, locally we must have the differential condition \ref{theo:charpsk:DiffCond}.
Consider the unitary global complex coframe $\theta$.
\begin{align*}
\nabla^{LC}\theta^2
&=\nabla^{LC}u^{3}+i\nabla^{LC}u^4\\
&=u^4\otimes u^1-u^3\otimes u^2+u^1\otimes u^4+i(u^3\otimes u^1+u^4\otimes u^2-u^1\otimes u^3)\\
&=u^4\otimes \theta^1 +iu^3\otimes \theta^1-iu^1\otimes \theta^2
=i\overline{\theta^2}\otimes \theta^1-iu^1\otimes \theta^2.
\end{align*}
Thus,
\begin{align}
\begin{split}
\nabla^{LC}\sigma
&=\nabla^{LC}\left(\frac{\sqrt{3}}{2}e^{i\alpha}(\theta^2)^3\right)\\
&=id\alpha\otimes \frac{\sqrt{3}}{2}e^{i\alpha}(\theta^2)^3+3\frac{\sqrt{3}}{2}e^{i\alpha}(\nabla^{LC}\theta^2)(\theta^2)^2\\
&=id\alpha\otimes \sigma+3\frac{\sqrt{3}}{2}e^{i\alpha}(i\overline{\theta^2}\otimes \theta^1-iu^1\otimes \theta^2
)(\theta^2)^2\\
&=i(d\alpha-3u^1)\otimes \sigma+3i\overline{\theta^2}\otimes\frac{\sqrt{3}}{2}e^{i\alpha}\theta^1(\theta^2)^2;\\
d^{LC}\sigma
&=i(d\alpha-3u^1)\wedge \sigma+3i\overline{\theta^2}\wedge\frac{\sqrt{3}}{2}e^{i\alpha}\theta^1(\theta^2)^2.
\end{split}
\end{align}
Notice that this is never of the form required by condition \ref{theo:charpsk:DiffCond} for any available choice of $\sigma$, since evaluating the last component at $\theta_1$, we obtain $i\frac{\sqrt{3}}{2}\overline{\theta^2}\wedge\theta^2\otimes \theta^2$, whereas the same operation on a form of type $i\tau\wedge\sigma$ would evaluate to zero.
We deduce that VI does not admit a projective special K\"{a}hler structure.

We are now left with cases III, VII, VIII, IX.
At the level of Lie groups, case III corresponds to the connected simply connected Lie group $\Hyp_{\sqrt{2}}\times\Hyp_2$ with $\sigma=\frac{3}{2}(\theta^1)^2\theta^2$ up to isomorphism.
The other deviances are in fact obtained by taking $e^{i\alpha}\sigma$ and thus we are in the situation noted in Remark \ref{rmk:dR0_unique_structure}.
The Lie groups corresponding to the cases VII, VIII and IX, are in particular homogeneous, and they all have zero deviance, so by Proposition \ref{prop:uniquenessHyp} we deduce that they are all isomorphic to $\Hyp_{\C}^2$ as projective special K\"{a}hler manifolds.
\end{proof}
\begin{rmk}
It is striking that in case III, which is obtained via the r-map from the polynomial $x^2y$, the deviance is a global tensor which is a multiple of this polynomial with respect to a K\"{a}hler holomorphic coframe.
\end{rmk}

It turns out that all $4$-dimensional projective special K\"{a}hler Lie groups are simply connected, so this theorem already presents all possible cases.
\begin{prop}\label{prop:universalCover}
Let $(\pi\colon \widetilde{M}\to M,\nabla)$ be a projective special K\"{a}hler manifold, then the universal cover $p\colon U\to M$ admits a projective special K\"{a}hler structure.
In particular, if $\gamma\colon S\to \sharp_2 S_{3,0}M$ is the intrinsic deviance for $M$, then $p^*S\to U$ is an $S^1$-bundle and if we call $p'$ the canonical map $p^*S\to S$, then $U$ has deviance $p^*\circ \gamma\circ p'\colon p^*S\to \sharp_2 S_{3,0}U$ on $U$.

If $M$ is a projective special K\"{a}hler Lie group, then so is $U$.
\end{prop}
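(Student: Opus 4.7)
The plan is to pull back the K\"ahler structure to $U$ and then verify the hypotheses of Theorem \ref{theo:characterisationPSK}. Since $p\colon U\to M$ is a covering map, hence a local diffeomorphism, setting $g_U:=p^*g$, $I_U:=p^*I$, $\omega_U:=p^*\omega$ yields a K\"ahler structure on $U$ for which $p$ is a local isometry. I would then take the $S^1$-bundle $\pi_{S_U}\colon S_U:=p^*S\to U$ together with the canonical bundle map $p'\colon p^*S\to S$ (satisfying $\pi_S\circ p'=p\circ\pi_{S_U}$), and equip it with the connection form $\varphi_U:={p'}^*\varphi$. The bundle map $\gamma_U:=p^*\circ\gamma\circ p'$ is well-defined pointwise, because $p$ being a local diffeomorphism identifies canonically the fibres of $\sharp_2 S_{3,0}M$ and $\sharp_2 S_{3,0}U$ above corresponding points.

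Next I would verify the three conditions of Theorem \ref{theo:characterisationPSK}. Point \ref{theo:charPSKPunto1} is immediate:
\begin{equation*}
d\varphi_U={p'}^*d\varphi=-2{p'}^*\pi_S^*\omega=-2\pi_{S_U}^*p^*\omega=-2\pi_{S_U}^*\omega_U.
\end{equation*}
Point \ref{theo:charPSKPunto2} reduces to the $S^1$-equivariance of $p'$ together with the $2$-homogeneity of $\gamma$. For point \ref{theo:charPSKPunto3}, choose an open cover $\{U_\alpha\}_{\alpha\in\mathcal{A}}$ of $M$ with sections $s_\alpha\colon U_\alpha\to S$ realising the PSK conditions on $M$. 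Each connected component $\tilde U_\beta$ of $p^{-1}(U_\alpha)$ is mapped diffeomorphically to $U_\alpha$ by $p$, so I obtain sections $\tilde s_\beta\colon \tilde U_\beta\to p^*S$, $\tilde s_\beta(\tilde u):=(\tilde u,s_\alpha(p(\tilde u)))$, whose domains cover $U$. By construction $p'\circ\tilde s_\beta=s_\alpha\circ p|_{\tilde U_\beta}$, so the corresponding local deviance on $U$ is $\eta^U_\beta=p^*\eta_\alpha$, and moreover $\tilde s_\beta^*\varphi_U=p^*s_\alpha^*\varphi$. Since $p$ is a local isometry one has $p^*\Omega^{LC}=\Omega^{LC}_U$ and $p^*\Omega_{\Pj^n_\C}=\Omega_{\Pj^n_\C}^U$, while $p^*$ commutes with $d^{LC}$ and with the algebraic bracket $[\cdot\wedge\cdot]$. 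Applying $p^*$ to conditions \ref{theo:charpsk:CurvCond} and \ref{theo:charpsk:DiffCond} for $(M,\eta_\alpha)$ therefore yields the corresponding conditions for $(U,\eta^U_\beta)$.

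For the Lie group statement, recall that the universal cover of a connected Lie group carries a unique Lie group structure making $p$ into a Lie group homomorphism, and that pullback along such a homomorphism sends left-invariant tensors to left-invariant tensors. Thus the pullback K\"ahler structure on $U$ is automatically left-invariant, so $U$ is a projective special K\"ahler Lie group. The whole argument is essentially an exercise in functoriality of pullback; the only genuine point of care is producing a sufficiently fine family of sections on $U$, which is ensured by $p$ being a covering, so that each $p^{-1}(U_\alpha)$ breaks into open pieces mapped diffeomorphically onto $U_\alpha$.
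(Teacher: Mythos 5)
Your proposal is correct and follows essentially the same route as the paper: pull back the K\"ahler structure, take the pulled-back circle bundle with connection ${p'}^*\varphi$ and deviance map $p^*\circ\gamma\circ p'$, and verify the hypotheses of Theorem \ref{theo:characterisationPSK}, with the Lie group statement handled by left-invariance of pullbacks along the covering homomorphism. The only difference is that you spell out the verification of condition \ref{theo:charPSKPunto3} via sections lifted sheet-by-sheet, which the paper compresses into the remark that everything follows from $p$ being a local diffeomorphism (and local isometry).
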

\begin{proof}
Since $p\colon U\to M$ is a cover, we can lift the whole K\"{a}hler structure of $M$ to $U$ by pullback $(U,p^*g,p^*I,p^*\omega)$ (the pullback of $I$ makes sense, since $p$ is a local diffeomorphism).
We will now use Theorem \ref{theo:characterisationPSK}.
The $S^1$-bundle $S$ lifts to an $S^{1}$-bundle $\pi_{p^*S}\colon p^*S\to U$, where the right action can be defined locally, since $p$ is a local diffeomorphism.
The principal connection $\varphi$ on $S$ lifts to $\varphi'=p'^*\varphi$ and its curvature is, as expected, $d\varphi'=p'^*d\varphi=-2p'\pi_S^*\omega=-2\pi_{p^*S}^*p^*\omega$.
Let $\gamma'=p^*\circ \gamma\circ p'\colon p^*S\to \sharp_2 S_{3,0}U$, then $\gamma'(ua)=a^2\gamma'(u)$ holds, as the action is defined on the fibres, which are preserved by the pullback.
The remaining properties also follow from the fact $p$ is a local diffeomorphism.

Finally, if $M$ is a Lie group with left invariant K\"{a}hler structure, then $U$ is a Lie group and its K\"{a}hler structure is also left invariant.
\end{proof}

Given a universal cover $p\colon U\to M$ of a projective special K\"{a}hler Lie group, $\ker(p)$ is a discrete subgroup and when $M$ is connected, $\ker(p)$ is in the centre $Z(U)$ of $U$.

From this observation we obtain the following corollary
\begin{cor}
A connected $4$-dimensional projective special K\"{a}hler Lie group is isomorphic to one of the following:
\begin{itemize}
\item $\Hyp_{\sqrt{2}}\times \Hyp_2$ with deviance $\sharp_2(\frac{3}{2}(\theta^1)^2\theta^2)$ in the standard complex unitary coframe $\theta$;
\item complex hyperbolic $2$-space with zero deviance.
\end{itemize}
\end{cor}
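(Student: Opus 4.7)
The plan is to deduce the corollary from Theorem \ref{theo:classificazionePSK4} by passing to the universal cover. Given a connected $4$-dimensional projective special K\"ahler Lie group $M$, I would let $p\colon U\to M$ be its universal cover; by Proposition \ref{prop:universalCover}, $U$ inherits the structure of a projective special K\"ahler Lie group, and being simply connected, Theorem \ref{theo:classificazionePSK4} identifies it, up to projective special K\"ahler isomorphism, with either $\Hyp_{\sqrt{2}}\times\Hyp_2$ (with the stated deviance) or $\Hyp_{\C}^2$ (with vanishing deviance).

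The kernel $K := \ker(p)$ is a discrete normal subgroup of the connected Lie group $U$, hence central, and $M\cong U/K$. To conclude that $M$ is isomorphic as a projective special K\"ahler manifold to $U$, it suffices to show that $Z(U) = \{e\}$ in both cases. For $U = \Hyp_{\sqrt{2}}\times\Hyp_2$, each factor is a copy of the unique simply connected non-abelian $2$-dimensional Lie group, namely the affine group of the real line, which has trivial centre by a direct matrix computation; hence $Z(U)$ is trivial.

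For $U = \Hyp_{\C}^2$, realised as the Iwasawa factor $AN\subset \SU(2,1)$, the Lie algebra is $\lie{d}_{4,1/2}$ (case VII of Table \ref{table:casesLeft}); a short computation with the brackets listed there shows that the Lie algebra centre $\lie{z}(\lie{u})$ vanishes. Since $U$ is a simply connected completely solvable Lie group, the exponential map is a global diffeomorphism, so $Z(U) = \exp(\lie{z}(\lie{u})) = \{e\}$. Hence $K$ is trivial and $M\cong U$ as projective special K\"ahler manifolds.

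I expect the last step, verifying that the Iwasawa factor of $\SU(2,1)$ has trivial centre, to be the main obstacle: it either requires invoking the structure theory of completely solvable Lie groups to identify $Z(U)$ with $\exp(\lie{z}(\lie{u}))$, or a direct computation of commutators using the group law on $AN$ reconstructed from the explicit brackets of Table \ref{table:casesLeft}. The remainder of the argument is essentially bookkeeping on top of the simply connected classification already established.
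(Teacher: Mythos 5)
Your overall route is exactly the paper's: pass to the universal cover $p\colon U\to M$, use Proposition \ref{prop:universalCover} and Theorem \ref{theo:classificazionePSK4}, note that $\ker(p)$ is a discrete hence central subgroup, and conclude by showing that the possible groups $U$ have trivial centre. The gap is in how you carry out the last step. The centre is an invariant of the Lie group structure of $U$, and at that level Theorem \ref{theo:classificazionePSK4} gives \emph{four} families, not two: besides case III ($\Hyp_{\sqrt{2}}\times\Hyp_2$) and case VII (the Iwasawa group $AN\subset\SU(2,1)$, Lie algebra $\lie{d}_{4,1/2}$), the universal cover may be one of the groups of cases VIII and IX, with Lie algebra $\lie{d}'_{4,\delta}$. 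These are isomorphic to $\Hyp_{\C}^2$ only as projective special K\"ahler manifolds, not as Lie groups, so your statement ``the Lie algebra is $\lie{d}_{4,1/2}$'' silently excludes them and your centre computation does not apply to them. Worse, the tool you propose fails there: $\lie{d}'_{4,\delta}$ is not completely solvable (from Table \ref{table:curvatures}, $\ad(u_3)$ acts on $\langle u_1,u_2\rangle$ with eigenvalues $a\sqrt{\delta}\pm\tfrac{2a}{\sqrt{\delta}}i$), so the implication ``completely solvable $\Rightarrow$ $\exp$ is a global diffeomorphism and $Z(U)=\exp(\lie{z}(\lie{u}))$'' is not available; note also that triviality of $\lie{z}(\lie{u})$ alone does not suffice for a simply connected solvable group (the universal cover of the euclidean group of the plane has trivial algebra centre but centre $\cong\Z$), so cases VIII and IX genuinely require an argument.

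The gap is fixable, which is what the paper's one-line appeal to ``trivial centre'' presupposes: for a connected group $Z(U)=\ker(\Ad)$, and for type VIII/IX one checks directly that $\Ad(g)=\id$ forces $g=e$. Writing $U=\R\ltimes N'$ with $N'$ the Heisenberg nilradical integrating $\langle u_1,u_2,u_4\rangle$, the $\R$-factor acts on $\langle u_1,u_2\rangle$ modulo $\langle u_4\rangle$ by a rotation composed with the nontrivial scaling $e^{a\sqrt{\delta}s}$, so the $\R$-component of a central element must vanish; then $g\in N'$ with $\Ad(g)$ unipotent and equal to the identity forces $g\in\exp\langle u_4\rangle$, which acts nontrivially on $u_3$ unless $g=e$. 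With this (or any equivalent verification that all four families of Table \ref{table:casesLeft} have trivial centre), your argument closes. Your cases III and VII are handled correctly; the step you flagged as the main obstacle (the centre of $AN\subset\SU(2,1)$) is not the real issue — the missing piece is the $\lie{d}'_{4,\delta}$ groups.
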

\begin{proof}
The proof follows from Theorem \ref{theo:classificazionePSK4} with Proposition \ref{prop:universalCover}, as a connected group $M$ with universal cover $p\colon U\to M$ is isomorphic to $U/\ker(p)$ and, if $M$ is a projective special K\"{a}hler Lie group, so is $U$ by Proposition \ref{prop:universalCover}.
Since $U$ is also simply connected, Theorem \ref{theo:classificazionePSK4} provides all the possibilities up to isomorphisms preserving the Lie structure.
The statement follows from the fact that these possibilities for $U$ have trivial centre.
\end{proof}
\section*{}
\bibliography{Bibliography}

\begin{thebibliography}{10}

\bibitem{CortesClassHomoSS}
D.~V. Alekseevsky and V.~Cort\'{e}s.
\newblock Classification of stationary compact homogeneous special
  pseudo-{K}\"{a}hler manifolds of semisimple groups.
\newblock {\em Proc. London Math. Soc. (3)}, 81(1):211--230, 2000.

\bibitem{SpComMan}
D.~V. Alekseevsky, V.~Cort\'{e}s, and C.~Devchand.
\newblock Special complex manifolds.
\newblock {\em J. Geom. Phys.}, 42(1-2):85--105, 2002.

\bibitem{CortesHKQK}
D.~V. Alekseevsky, V.~Cort\'{e}s, M.~Dyckmanns, and T.~Mohaupt.
\newblock Quaternionic {K}\"{a}hler metrics associated with special
  {K}\"{a}hler manifolds.
\newblock {\em J. Geom. Phys.}, 92:271--287, 2015.

\bibitem{Conification2013}
D.~V. Alekseevsky, V.~Cort\'{e}s, and T.~Mohaupt.
\newblock Conification of {K}\"{a}hler and hyper-{K}\"{a}hler manifolds.
\newblock {\em Comm. Math. Phys.}, 324(2):637--655, 2013.

\bibitem{BauesCortes}
O.~Baues and V.~Cort\'{e}s.
\newblock Abelian simply transitive affine groups of symplectic type.
\newblock {\em Ann. Inst. Fourier (Grenoble)}, 52(6):1729--1751, 2002.

\bibitem{BC2003}
O.~Baues and V.~Cort\'{e}s.
\newblock Proper affine hyperspheres which fiber over projective special
  {K}\"{a}hler manifolds.
\newblock {\em Asian J. Math.}, 7(1):115--132, 2003.

\bibitem{Berger}
M.~Berger.
\newblock Sur les groupes d'holonomie homog\`ene des vari\'{e}t\'{e}s \`a
  connexion affine et des vari\'{e}t\'{e}s riemanniennes.
\newblock {\em Bull. Soc. Math. France}, 83:279--330, 1955.

\bibitem{CandelasOssa1991}
P.~Candelas and X.~C. de~la Ossa.
\newblock Moduli space of {C}alabi-{Y}au manifolds.
\newblock {\em Nuclear Phys. B}, 355(2):455--481, 1991.

\bibitem{CFG1989}
S.~Cecotti, S.~Ferrara, and L.~Girardello.
\newblock Geometry of type {II} superstrings and the moduli of superconformal
  field theories.
\newblock {\em Internat. J. Modern Phys. A}, 4(10):2475--2529, 1989.

\bibitem{Chevalley}
C.~Chevalley.
\newblock On the topological structure of solvable groups.
\newblock {\em Ann. of Math. (2)}, 42:668--675, 1941.

\bibitem{Chu}
B.~Y. Chu.
\newblock Symplectic homogeneous spaces.
\newblock {\em Trans. Amer. Math. Soc.}, 197:145--159, 1974.

\bibitem{CortesHK1998}
V.~Cort\'{e}s.
\newblock On hyper-{K}\"{a}hler manifolds associated to {L}agrangian
  {K}\"{a}hler submanifolds of {$T^*{\bf C}^n$}.
\newblock {\em Trans. Amer. Math. Soc.}, 350(8):3193--3205, 1998.

\bibitem{CortesClassPSKr}
V.~Cort\'{e}s, M.~Dyckmanns, and D.~Lindemann.
\newblock Classification of complete projective special real surfaces.
\newblock {\em Proc. Lond. Math. Soc. (3)}, 109(2):423--445, 2014.

\bibitem{CortesCompProj}
V.~Cort\'{e}s, M.~Dyckmanns, and S.~Suhr.
\newblock Completeness of projective special {K}\"{a}hler and quaternionic
  {K}\"{a}hler manifolds.
\newblock In {\em Special metrics and group actions in geometry}, volume~23 of
  {\em Springer INdAM Ser.}, pages 81--106. Springer, Cham, 2017.

\bibitem{CiSC}
V.~Cort\'{e}s, X.~Han, and T.~Mohaupt.
\newblock Completeness in supergravity constructions.
\newblock {\em Comm. Math. Phys.}, 311(1):191--213, 2012.

\bibitem{CortesCohomogeneityOne}
V.~Cortés, M.~Dyckmanns, M.~Jüngling, and D.~Lindemann.
\newblock A class of cubic hypersurfaces and quaternionic {K}ähler manifolds
  of co-homogeneity one.
\newblock arXiv:1701.7882, 2018.

\bibitem{DorfmeisterNakajima}
J.~Dorfmeister and K.~Nakajima.
\newblock The fundamental conjecture for homogeneous {K}\"{a}hler manifolds.
\newblock {\em Acta Math.}, 161(1-2):23--70, 1988.

\bibitem{FerSab}
S.~Ferrara and S.~Sabharwal.
\newblock Quaternionic manifolds for type {${\rm II}$} superstring vacua of
  {C}alabi-{Y}au spaces.
\newblock {\em Nuclear Phys. B}, 332(2):317--332, 1990.

\bibitem{Fre1995}
P.~Fr\'{e}.
\newblock Lectures on special {K}\"{a}hler geometry and electric-magnetic
  duality rotations.
\newblock {\em Nuclear Phys. B Proc. Suppl.}, 45BC:59--114, 1996.

\bibitem{Freed1999}
D.~S. Freed.
\newblock Special {K}\"{a}hler manifolds.
\newblock {\em Comm. Math. Phys.}, 203(1):31--52, 1999.

\bibitem{Goldman}
W.~M. Goldman.
\newblock {\em Complex hyperbolic geometry}.
\newblock Oxford Mathematical Monographs. The Clarendon Press, Oxford
  University Press, New York, 1999.
\newblock Oxford Science Publications.

\bibitem{Helgason}
S.~Helgason.
\newblock {\em Differential geometry, {L}ie groups, and symmetric spaces},
  volume~34 of {\em Graduate Studies in Mathematics}.
\newblock American Mathematical Society, Providence, RI, 2001.
\newblock Corrected reprint of the 1978 original.

\bibitem{HitHKQK}
N.~Hitchin.
\newblock On the hyperk\"{a}hler/quaternion {K}\"{a}hler correspondence.
\newblock {\em Comm. Math. Phys.}, 324(1):77--106, 2013.

\bibitem{KN}
S.~Kobayashi and K.~Nomizu.
\newblock {\em {F}oundations of {D}ifferential {G}eometry}.
\newblock Intersciences Publishers, New York. Springer-Verlag, 1963,1969.

\bibitem{Swann2015}
O.~Macia and A.~Swann.
\newblock Twist geometry of the c-map.
\newblock {\em Comm. Math. Phys.}, 336(3):1329--1357, 2015.

\bibitem{MaciaSwann2019}
O.~Macia and A.~Swann.
\newblock The c-map on groups.
\newblock {\em Classical Quantum Gravity}, 37(1):015015, 17, 2020.

\bibitem{PhDThesis}
M.~Mantegazza.
\newblock {\em An intrinsic approach to the c-map}.
\newblock PhD thesis, Joint PhD program in mathematics, Università degli studi
  di Pavia, Università degli studi di Milano-Bicocca, Indam, 12 2019.

\bibitem{Ovando2004}
G.~Ovando.
\newblock Complex, symplectic and {K}\"{a}hler structures on four dimensional
  {L}ie groups.
\newblock {\em Rev. Un. Mat. Argentina}, 45(2):55--67 (2005), 2004.

\bibitem{RedBook}
S.~Salamon.
\newblock {\em Riemannian geometry and holonomy groups}, volume 201 of {\em
  Pitman Research Notes in Mathematics Series}.
\newblock Longman Scientific \& Technical, Harlow; copublished in the United
  States with John Wiley \& Sons, Inc., New York, 1989.

\bibitem{ShimaV}
H.~Shima.
\newblock Vanishing theorems for compact {H}essian manifolds.
\newblock {\em Ann. Inst. Fourier (Grenoble)}, 36(3):183--205, 1986.

\bibitem{ShimaCSC}
H.~Shima.
\newblock Hessian manifolds of constant {H}essian sectional curvature.
\newblock {\em J. Math. Soc. Japan}, 47(4):735--753, 1995.

\bibitem{TuDG}
L.~W. Tu.
\newblock {\em Differential geometry}, volume 275 of {\em Graduate Texts in
  Mathematics}.
\newblock Springer, Cham, 2017.
\newblock Connections, curvature, and characteristic classes.

\end{thebibliography}
\bibliographystyle{plain}
\end{document}